\providecommand{\tabularnewline}{\\}
\numberwithin{equation}{section}
\numberwithin{figure}{section}
\theoremstyle{plain}
\newtheorem{thm}{\protect\theoremname}
  \theoremstyle{plain}
  \newtheorem{prop}[thm]{\protect\propositionname}
  \theoremstyle{plain}
  \newtheorem{lem}[thm]{\protect\lemmaname}
  \theoremstyle{remark}
  \newtheorem{rem}[thm]{\protect\remarkname}
  \theoremstyle{definition}
  \newtheorem{example}[thm]{\protect\examplename}
 \theoremstyle{definition}
 \newtheorem*{defn*}{\protect\definitionname}
  \providecommand{\definitionname}{Definition}
  \providecommand{\examplename}{Example}
  \providecommand{\lemmaname}{Lemma}
  \providecommand{\propositionname}{Proposition}
  \providecommand{\remarkname}{Remark}
\providecommand{\theoremname}{Theorem}
\begin{document}

\title[On generalized Kummer surfaces]{On  generalized Kummer surfaces and the orbifold Bogomolov-Miyaoka-Yau inequality}

\addtolength{\textwidth}{0mm}
\addtolength{\hoffset}{-0mm} 
\addtolength{\textheight}{0mm}
\addtolength{\voffset}{-0mm} 



\global\long\global\long\def\Alb{{\rm Alb}}
 \global\long\global\long\def\Jac{{\rm Jac}}
\global\long\global\long\def\Disc{{\rm Disc}}
\global\long\global\long\def\Tr{{\rm Tr}}
 \global\long\global\long\def\NS{{\rm NS}}
\global\long\global\long\def\PicVar{{\rm PicVar}}
\global\long\global\long\def\Pic{{\rm Pic}}
\global\long\global\long\def\Br{{\rm Br}}
 \global\long\global\long\def\Pr{{\rm Pr}}
\global\long\global\long\def\Km{{\rm Km}}
\global\long\global\long\def\rk{{\rm rk}}
\global\long\global\long\def\Hom{{\rm Hom}}
\global\long\global\long\def\Div{{\rm div}}
 \global\long\global\long\def\End{{\rm End}}
 \global\long\global\long\def\aut{{\rm Aut}}
 \global\long\global\long\def\SSm{{\rm S}}
 \global\long\global\long\def\psl{{\rm PSL}}
 \global\long\global\long\def\CC{\mathbb{C}}
 \global\long\global\long\def\BB{\mathbb{B}}
 \global\long\global\long\def\PP{\mathbb{P}}
 \global\long\global\long\def\QQ{\mathbb{Q}}
 \global\long\global\long\def\RR{\mathbb{R}}
 \global\long\global\long\def\FF{\mathbb{F}}
 \global\long\global\long\def\DD{\mathbb{D}}
 \global\long\global\long\def\NN{\mathbb{N}}
 \global\long\global\long\def\ZZ{\mathbb{Z}}
 \global\long\global\long\def\HH{\mathbb{H}}
 \global\long\global\long\def\Gal{{\rm Gal}}
 \global\long\global\long\def\OO{\mathcal{O}}
 \global\long\global\long\def\pP{\mathfrak{p}}
 \global\long\global\long\def\pPP{\mathfrak{P}}
 \global\long\global\long\def\qQ{\mathfrak{q}}
 \global\long\global\long\def\mm{\mathcal{M}}
 \global\long\global\long\def\aaa{\mathfrak{a}}
\global\long\def\a{\alpha}
\global\long\def\b{\beta}
 \global\long\def\d{\delta}
 \global\long\def\D{\Delta}
\global\long\def\L{\Lambda}
 \global\long\def\g{\gamma}
 \global\long\def\G{\Gamma}
 \global\long\def\d{\delta}
 \global\long\def\D{\Delta}
 \global\long\def\e{\varepsilon}
 \global\long\def\k{\kappa}
 \global\long\def\l{\lambda}
 \global\long\def\m{\mu}
 \global\long\def\o{\omega}
 \global\long\def\p{\pi}
 \global\long\def\P{\Pi}
 \global\long\def\s{\sigma}
 \global\long\def\S{\Sigma}
 \global\long\def\t{\theta}
\global\long\def\qa{\frak{a}}
 \global\long\def\T{\Theta}
 \global\long\def\f{\varphi}
 \global\long\def\deg{{\rm deg}}
 \global\long\def\det{{\rm det}}
 \global\long\def\dps{{\displaystyle }}
 \global\long\def\Dem{D\acute{e}monstration: }
 \global\long\def\ker{{\rm Ker\,}}
 \global\long\def\im{{\rm Im\,}}
 \global\long\def\rg{{\rm rg\,}}
 \global\long\def\car{{\rm car}}
\global\long\def\fix{{\rm Fix( }}
 \global\long\def\card{{\rm Card\  }}
 \global\long\def\codim{{\rm codim\,}}
 \global\long\def\coker{{\rm Coker\,}}
 \global\long\def\mod{{\rm mod }}
 \global\long\def\pgcd{{\rm pgcd}}
 \global\long\def\ppcm{{\rm ppcm}}
 \global\long\def\la{\langle}
 \global\long\def\ra{\rangle}

\subjclass[2000]{Primary 14J28, 14L30, 32J25, Secondary 14J50}

\keywords{Generalized K3 surfaces, Kummer Lattices, Quotient surfaces, orbifold
Bogomolov-Miyaoka-Yau inequality }

\author{Xavier Roulleau}
\begin{abstract}
A generalized Kummer surface $X=\Km(T,G)$ is the resolution of a
quotient of a torus $T$ by a finite group of symplectic automorphisms
$G$. We complete the classification of generalized Kummer surfaces
by studying the two last groups which have not been yet studied. For
these surfaces we compute the associated Kummer lattice $K_{G}$,
which is the minimal primitive sub-lattice containing the exceptional
curves of the resolution $X\to T/G$. We then prove that a K3 surface
is a generalized Kummer surface of type $\Km(T,G)$ if and only if
its Néron-Severi group contains $K_{G}$. \\
For smooth-orbifold surfaces $\mathcal{X}$ of Kodaira dimension $\geq0$,
Kobayashi proved the orbifold Bogomolov Miyaoka Yau inequality $c_{1}^{2}(\mathcal{X})\leq3c_{2}(\mathcal{X}).$
For Kodaira dimension $2$, the case of equality is characterized
as $\mathcal{X}$ being uniformized by the complex $2$-ball $\BB_{2}$.
For smooth-orbifold K3 and Enriques surfaces we characterize the case
of equality as being uniformized by $\CC^{2}$. 
\end{abstract}

\maketitle

\section{Introduction}

A K3 surface $X$ is called a \textit{generalized Kummer surface}
and we write $X=\Km(T,G)$ if it is the resolution of a quotient $T/G$
where $T$ is a torus and $G$ is a finite group of automorphisms
of $T$. Let $X=\Km(T,G)$ be a generalized Kummer surface and let
$F_{G}$ be the sub-lattice of the Néron Severi group $\NS(X)$ generated
by the exceptional divisors $\mathcal{C}_{G}$ of the resolution $X\to T/G$.
The minimal primitive sub-lattice $K_{G}$ of $\NS(X)$ containing
the lattice $F_{G}$ is called the \textit{Kummer lattice} of $G$.
\\
In \cite{Nikulin} Nikulin computes the Kummer lattice $K_{\ZZ/2\ZZ}$
and obtains the famous result that for a K3 surface $X$, it is equivalent
to be a Kummer or to contain $16$ disjoint $(-2)$-curves or that
there exists a primitive embedding of the lattice $K_{\ZZ/2\ZZ}$
in the Néron-Severi group of $X$.\\
That result linking the primitive embedding of a lattice $K_{G}$
contained in $\NS(X)$ to a geometric description of $X$ has then
been extended by Bertin \cite{Bertin} and Garbagnati \cite{Garbagnati}
to the $7$ other symplectic automorphism groups $G$ acting on some
torus $T$ such that the action of $G$ preserves the origin of $T$. 

It turns out that there are symplectic groups $G$ which has not been
yet studied: when $G$ has no global fix points on the torus $T$.
Up to taking quotient of $G$ by its translation subgroup, one can
suppose that $G$ contains no translations. Fujiki has described and
classified such pairs $(T,G)$: then $G$ is isomorphic to $\hat{Q}_{8}$
(the quaternion group) or $\hat{T}_{24}$ (the binary tetrahedral
group of order $24$). We compute that the singularities of the quotient
$T/\hat{Q}_{8}$ are $\mathcal{C}_{\hat{Q}_{8}}=6A_{3}+A_{1}$ and
the ones of $T/\hat{T}_{24}$ are $\mathcal{C}_{\hat{T}_{24}}=4A_{2}+2A_{3}+A_{5}$.
Let be $G=\hat{Q}_{8}$ or $\hat{T}_{24}$. In sub-sections \ref{subsec:Q8}
and \ref{subsec:The-configuration Q24}, we describe the minimal primitive
sub-lattice $K_{G}$ containing the lattice $F_{G}$ generated by
the exceptional curves $\mathcal{C}_{G}$ of the minimal resolution
of $T/G$, and we obtain the following result:
\begin{thm}
Let $X$ be a K3 surface and $G$ be the group $\hat{Q}_{8}$ or $\hat{T}_{24}$.
The following conditions are equivalent:\\
i) $X$ is a generalized Kummer surface $X=\Km(T,G)$ \\
ii) the Kummer lattice $K_{G}$ is primitively embedded in $\NS(X)$.\\
iii) $X$ contains a configuration of $ADE$ curves $\mathcal{C}_{G}$. 
\end{thm}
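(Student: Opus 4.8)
\emph{Plan.} I would prove the cycle (i)$\,\Rightarrow\,$(iii)$\,\Rightarrow\,$(i) together with the equivalence (ii)$\,\Leftrightarrow\,$(iii), which makes all three statements equivalent. The implication (i)$\,\Rightarrow\,$(iii) is immediate from the description of the resolution recalled in the introduction: if $X=\Km(T,G)$, then $X\to T/G$ is the minimal resolution and, by Fujiki's classification of the pairs $(T,G)$, its exceptional divisor is a disjoint union of $ADE$ chains of smooth rational $(-2)$-curves forming the configuration $\mathcal{C}_{\hat{Q}_{8}}=6A_3+A_1$ (resp.\ $\mathcal{C}_{\hat{T}_{24}}=4A_2+2A_3+A_5$); in particular $X$ carries $\mathcal{C}_G$.

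\emph{The equivalence (ii)$\,\Leftrightarrow\,$(iii).} Suppose $X$ carries $\mathcal{C}_G$. The classes of these curves span a negative definite sublattice of $\NS(X)$ isometric to $F_G$, and I would pass to its primitive closure $\overline{F_G}$ in $\NS(X)$: it is an even overlattice of $F_G$ which is itself primitively embedded in the K3 lattice $\L_{K3}:=U^{\oplus 3}\oplus E_8(-1)^{\oplus 2}$. By Nikulin's correspondence, even overlattices of $F_G$ are classified by the isotropic subgroups of the discriminant form $(A_{F_G},q_{F_G})$; matching this list against the explicit description of $K_G$ obtained in Subsections~\ref{subsec:Q8} and~\ref{subsec:The-configuration Q24}, together with the signature and length constraints for a primitive embedding into $\L_{K3}$, should leave $\overline{F_G}\cong K_G$ as the only possibility, which is (ii). Conversely, if $K_G$ embeds primitively in $\NS(X)$, then Riemann--Roch on the K3 surface $X$ shows that each $(-2)$-generator of $F_G$ is effective or anti-effective; after composing the embedding with a suitable element of the Weyl group of $\NS(X)$ one may assume they are all effective, and then the intersection graph of $F_G$ (a disjoint union of diagrams $A_n$) forces each of them to be an irreducible $(-2)$-curve, so that $X$ carries $\mathcal{C}_G$. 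This last step is the classical Kummer-lattice argument, and here I expect only routine bookkeeping with the graph of $\mathcal{C}_G$.

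\emph{The direction (iii)$\,\Rightarrow\,$(i).} This is the substantial part. I would first contract $\mathcal{C}_G$ to obtain a normal projective surface $Y$ with rational double points of the types in $\mathcal{C}_G$ and with $\o_Y\cong\OO_Y$ (contracting $(-2)$-curves on a K3 surface is crepant). One then has to reconstruct a complex torus $T$ together with a $G$-action such that $Y\cong T/G$. Since $G$ is nonabelian and acts on $T$ without a global fixed point, this cannot be achieved by a single cyclic or abelian cover in the style of Nikulin's double cover for the $\ZZ/2$ case; instead I would exploit the central involution $-1\in G$. The quotient $T/\{\pm1\}$ is an ordinary (singular) Kummer surface of $T$, on which $\bar G:=G/\{\pm1\}$ — equal to $(\ZZ/2)^2$, resp.\ $A_4$ — acts symplectically, and $(T/\{\pm1\})/\bar G=T/G$. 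Reading this tower backwards: the embedding $K_G\hookrightarrow\NS(X)$ should, by a computation with the explicit $K_G$, force $\NS(X)$ to contain the lattice governing symplectic $\bar G$-quotients of K3 surfaces, so that by the results on such quotients $X$ is the minimal resolution of $Z/\bar G$ for some K3 surface $Z$; the same lattice data should put $16$ disjoint $(-2)$-curves with Nikulin's lattice $K_{\ZZ/2}$ inside $\NS(Z)$, whence $Z=\Km(T)$ by Nikulin's theorem. Then $X$ is the minimal resolution of $\Km(T)/\bar G=(T/\{\pm1\})/\bar G=T/G$, and the $\bar G$-action lifts to a $G$-action on $T$ matching Fujiki's classification, i.e.\ $X=\Km(T,G)$. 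An alternative, more intrinsic route is to regard $Y$ as an orbifold: the configuration $\mathcal{C}_G$ forces the orbifold Euler number of $Y$ to vanish, so $Y$ attains equality in the orbifold Bogomolov--Miyaoka--Yau inequality and (by the uniformization result for the equality case discussed later in this paper) is uniformized by $\CC^2$; writing $Y=\CC^2/\Gamma$ with $\Gamma$ a crystallographic group and reading off its translation lattice and point group then yields $T$ and $G$.

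\emph{Main obstacle.} The crux is twofold. First, one must establish the precise lattice compatibilities: that a primitive embedding of $K_G$ into $\NS(X)$ really forces the embeddings of $K_{\ZZ/2}$ and of the lattice of symplectic $\bar G$-quotients needed to feed into Nikulin's theorem and the quotient results; here the detailed determination of $K_G$ in Subsections~\ref{subsec:Q8} and~\ref{subsec:The-configuration Q24} (the index of $F_G$ in $K_G$, the torsion classes and their supports on $\mathcal{C}_G$) does the essential work. Second, one must match the reconstructed data — the deck group of the covering, or the point group of the crystallographic group $\Gamma$ — with Fujiki's classification of torus pairs, checking that the local behaviour at each rational double point and the way the local pictures glue along the whole configuration $\mathcal{C}_G$ (not merely the list of singularity types) single out $\hat{Q}_{8}$, resp.\ $\hat{T}_{24}$, and in particular that $T$ is a genuine complex torus and not, say, a bielliptic surface. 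Carrying out this matching, and verifying that the nonabelian cover is assembled correctly from the abelian data of $K_G/F_G$, is where I expect the real work to lie.
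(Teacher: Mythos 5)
Your overall architecture for the substantial implication (iii)$\,\Rightarrow\,$(i) — produce an abelian cover of $X$ branched on part of $\mathcal{C}_G$, recognize that cover as a Kummer surface via Nikulin's $16A_1$ criterion, lift the deck transformations to the torus, and identify the resulting group through Fujiki's classification — is exactly the paper's strategy. But two of your specific suggestions would fail, and one key mechanism is missing. First, your ``alternative, more intrinsic route'' via equality in the orbifold Bogomolov--Miyaoka--Yau inequality is circular: the uniformization statement for K3 orbifolds with $c_2(\mathcal{X})=0$ (Theorem \ref{thm:MAIN-1}) is \emph{deduced} in Section \ref{sec:Kobayashi's-problem-for} from the classification of generalized Kummer surfaces, i.e.\ from the very theorem you are proving; Kobayashi's part B) only covers general type. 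Second, the engine that actually produces the covers is not ``a computation with the explicit $K_G$'' forcing an embedding of the lattice of symplectic $\bar G$-quotients; it is elementary counting combined with the length bound of Proposition \ref{prop:()-NikuLattice}: for $\hat Q_8$ the configuration $A_1+6A_3$ contains $13$ disjoint $(-2)$-curves, hence (Lemma \ref{lem:8 nodes}) two independent even sets, which Remark \ref{rem:necessary cond even} forces to be supported on the twelve extremal curves of the $6A_3$; the associated bidouble cover is then the $(\ZZ_2)^2$-cover you want. For $\hat T_{24}$ the paper does \emph{not} work with $\bar G\simeq A_4$ directly (which would require the lattice theory of symplectic $A_4$-quotients and a commutation of resolution with quotient that you have not justified): it first extracts a $3$-divisible class from the $8$ disjoint $A_2$'s, takes the triple cover to land in the already-settled $\hat Q_8$ case, and only then lifts the order-$3$ automorphism through the bidouble cover (Lemma \ref{lem:lift order 3 to bidoubles}).

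For (ii)$\,\Rightarrow\,$(iii) your Weyl-group argument is incomplete as stated: after making all $(-2)$-classes effective you can only conclude that the \emph{roots of the primitive overlattice} $K_G$ supply a simple base of irreducible $(-2)$-curves, and this gives back the configuration $\mathcal{C}_G$ only if the root system of $K_G$ coincides with that of $F_G$. This is precisely the hypothesis of Proposition \ref{prop: Bertin-Garbagati}, and the paper verifies it by an explicit computation ($37$ roots for $K_{\hat Q_8}$, matching $F_{\hat Q_8}$; likewise for $K_{\hat T_{24}}$). Without that check, a primitive embedding of $K_G$ could a priori produce a different $ADE$ configuration. Your treatment of (iii)$\,\Rightarrow\,$(ii) via Nikulin's overlattice correspondence is a legitimate alternative in principle, but note that the paper again proceeds geometrically: the admissible isotropic subgroups of the discriminant form are pinned down using which divisible sets can actually be supported on $\mathcal{C}_G$ (Remark \ref{rem:necessary cond even}), which is what uniquely determines $K_G$ as the primitive closure.
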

Then we turn our attention to a related question, which was our initial
motivation. Let $\mathcal{C}$ be a configuration of disjoint $ADE$
curves on a smooth surface $X$ and let $X\to\mathcal{X}$ be the
contraction of the connected components of $\mathcal{C}$. To the
singular surface $\mathcal{X}$ one can associate its orbifold Chern
numbers, denoted by $c_{1}^{2}(\mathcal{X}),\,c_{2}(\mathcal{X})\in\QQ,$
which depends on the Chern numbers $c_{1}^{2}(X),\,c_{2}(X)$ of $X$
and on the number and type of the $ADE$ singularities of $\mathcal{X}$.
These orbifold Chern numbers have the following property:
\begin{thm}
\label{thm:(Kobayashi)-1} (Orbifold Bogomolov-Miyaoka-Yau inequality,
\cite{Kobayashi, Kobayashi2, Hirzebruch, Megyesi}).\footnote{Note that there exist stronger versions of Theorem \ref{thm:(Kobayashi)-1},
in particular with other quotient singularities, but for surfaces
of Kodaira dimension $0$ which is the case of interest for us, the
only quotient singularities one can obtain are $ADE$.} Suppose that $X$ is a minimal algebraic surface of Kodaira dimension
$\geq0$. Then:\\
A) One has 
\begin{equation}
c_{1}^{2}(\mathcal{X})\leq3c_{2}(\mathcal{X}).\label{eq:BMY-1}
\end{equation}
B) Suppose $X$ has general type. Equality holds in \ref{eq:BMY-1}
if and only if there exists a discrete cocompact lattice $\Gamma$
in $PU(2,1)$ such that $\mathcal{X}=\BB_{2}/\Gamma$. In other words,
one has equality if and only if $\mathcal{X}$ is uniformisable by
the unit ball $\BB_{2}$.
\end{thm}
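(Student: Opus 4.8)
Since Theorem \ref{thm:(Kobayashi)-1} is a known result I only indicate the line of argument I would follow; the plan is to reduce the inequality to an orbifold K\"ahler--Einstein estimate (an alternative is the log-surface approach of Sakai and Miyaoka used by Megyesi).

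\textbf{Step 1: orbifold structure and the two Chern numbers.} First I would fix the definitions. Each $ADE$ point of $\mathcal X$ is a rational double point, hence analytically isomorphic to $\CC^2/G_p$ for a finite subgroup $G_p\subset\mathrm{SL}_2(\CC)$, so $\mathcal X$ is a compact complex $V$-manifold; since $ADE$ singularities are canonical, the minimal resolution $\pi\colon X\to\mathcal X$ is crepant, $K_X=\pi^*K_{\mathcal X}$. Hence
\begin{equation}
c_1^2(\mathcal X):=K_{\mathcal X}^2=K_X^2=c_1^2(X),\label{eq:orb-c1}
\end{equation}
while $c_2(\mathcal X)$ is the orbifold Euler number
\begin{equation}
c_2(\mathcal X):=e(X)-\sum_{p}\Bigl(e(\pi^{-1}(p))-\frac{1}{|G_p|}\Bigr),\label{eq:orb-c2}
\end{equation}
the correction at a point of type $A_n$ being $(n+1)-\frac{1}{n+1}$, and similarly for $D_n,E_n$. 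Since $X$ is minimal with $\kappa(X)\ge 0$, $K_X$ is nef, hence so is $K_{\mathcal X}$ by \eqref{eq:orb-c1}; and $K_{\mathcal X}$ is big exactly when $\kappa(X)=2$.

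\textbf{Step 2: the Bogomolov--Miyaoka--Yau inequality.} I would argue according to the sign of $c_1^2(\mathcal X)=K_{\mathcal X}^2$. If $K_{\mathcal X}^2>0$ (i.e. $\kappa(X)=2$), then $K_{\mathcal X}$ is nef and big; passing if needed to the canonical model (which has only $ADE$ singularities and ample canonical class, and for which $c_1^2$ is unchanged while $c_2$ can only decrease, so \eqref{eq:BMY-1} for $\mathcal X$ follows from \eqref{eq:BMY-1} for the canonical model), one applies the orbifold Aubin--Yau theorem of R. Kobayashi to obtain an orbifold K\"ahler--Einstein $V$-metric $\omega$ with $\mathrm{Ric}(\omega)=-\omega$. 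Orbifold Chern--Weil theory then represents $c_1^2(\mathcal X)$ and $c_2(\mathcal X)$ by curvature integrals over $\mathcal X$ (the $V$-charts contributing precisely the weights $1/|G_p|$ of \eqref{eq:orb-c2}), and the pointwise identity for a K\"ahler--Einstein metric yields
\begin{equation}
3c_2(\mathcal X)-c_1^2(\mathcal X)=\frac{1}{8\pi^2}\int_{\mathcal X}|R_0|^2\,dV\ \ge\ 0,\label{eq:CW}
\end{equation}
where $R_0$ is the trace-free part of the curvature tensor; this is \eqref{eq:BMY-1}. If $K_{\mathcal X}^2=0$ (so $\kappa(X)\in\{0,1\}$), \eqref{eq:BMY-1} reduces to $c_2(\mathcal X)\ge 0$, which I would verify directly from \eqref{eq:orb-c2} and the Enriques--Kodaira classification: abelian and bielliptic surfaces contain no $(-2)$-curve, so $\mathcal X=X$ and $c_2=0$; for K3 and Enriques surfaces $e(X)=24$, resp. $12$, dominates the corrections; and for properly elliptic $X$ each $ADE$ configuration is vertical for the elliptic fibration, so $c_2(\mathcal X)$ splits as a sum of non-negative orbifold fiber-contributions, using $e(X)=\sum_i e(F_i)$.

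\textbf{Step 3: equality in the general-type case.} Assume $\kappa(X)=2$ and that equality holds in \eqref{eq:BMY-1}. By Step 2 the canonical model then equals $\mathcal X$ (so $K_{\mathcal X}$ is ample) and $R_0\equiv 0$ in \eqref{eq:CW}: the orbifold K\"ahler--Einstein metric has constant negative holomorphic sectional curvature. By the orbifold uniformization theorem the universal orbifold cover of $\mathcal X$ is then the complex $2$-ball $\BB_2$ with its Bergman metric, and $\Gamma=\pi_1^{\mathrm{orb}}(\mathcal X)$ realizes $\mathcal X\cong\BB_2/\Gamma$ with $\Gamma$ a discrete cocompact subgroup of $\aut(\BB_2)=PU(2,1)$ acting with the finite stabilizers $G_p$ over the singular points. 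Conversely, if $\mathcal X=\BB_2/\Gamma$ with such a $\Gamma$, the Bergman metric descends to an orbifold K\"ahler--Einstein metric of constant holomorphic sectional curvature, so \eqref{eq:CW}, and hence \eqref{eq:BMY-1}, is an equality.

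\textbf{Where the difficulty lies.} The real content is analytic: the existence, and the correct regularity near the $ADE$ points, of the orbifold K\"ahler--Einstein $V$-metric when $K_{\mathcal X}$ is merely nef and big --- this is exactly where one relies on Kobayashi's work --- together with the identification of the Chern--Weil integrals in \eqref{eq:CW} with the combinatorial orbifold Chern numbers \eqref{eq:orb-c1}--\eqref{eq:orb-c2}. The $\kappa(X)\le 1$ part of Step 2, though elementary, is case-heavy, and in Step 3 the passage from ``constant holomorphic sectional curvature'' to an honest cocompact lattice in $PU(2,1)$ --- the orbifold uniformization statement itself --- is subtler than in the smooth case because of the isotropy along the exceptional locus.
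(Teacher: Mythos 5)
The paper does not prove this theorem: it is quoted from the literature (Kobayashi, Hirzebruch, Megyesi), so there is no internal proof to compare against. Your sketch of the general-type part (orbifold Aubin--Yau metric, Chern--Weil representation of the orbifold Chern numbers, $R_0\equiv 0$ forcing constant holomorphic sectional curvature, and orbifold uniformization by $\BB_2$) is the standard route of R.~Kobayashi and is the right account of where the content lies; the reduction to the canonical model and the identification $c_1^2(\mathcal X)=K_X^2$ via crepancy of the minimal resolution are also correct.

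There is, however, a genuine gap in your treatment of the case $K_{\mathcal X}^2=0$, specifically for K3 surfaces. You assert that $e(X)=24$ ``dominates the corrections,'' i.e.\ that $m(\mathcal C)\le 24$ for every $ADE$ configuration on a K3, and present this as a direct verification. It is not elementary: the only a priori constraint is the rank bound $\sum n(\a_n+\d_n+\e_n)\le 19$, and that bound alone does \emph{not} imply $m(\mathcal C)\le 24$ (a hypothetical configuration $19A_1$ would give $m=19\cdot 2-\tfrac{19}{2}=28.5$, and even ruling out more than $16$ disjoint $(-2)$-curves already requires Nikulin's lattice/coding-theoretic analysis). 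So as written, your ``direct'' argument for $\kappa(X)=0$ silently assumes the very inequality being proved. The uniform fix is to run the same machinery as in your Step 2: since $c_1(\mathcal X)=0$, the orbifold Calabi--Yau theorem produces a Ricci-flat $V$-metric, and the Chern--Weil identity becomes $3c_2(\mathcal X)=\frac{1}{8\pi^2}\int_{\mathcal X}|R_0|^2\,dV\ge 0$, which is exactly part A) in this case (and, incidentally, is what makes the equality case for K3's --- the subject of the rest of the paper --- equivalent to flatness and hence to uniformization by $\CC^2$). A similar remark applies to the properly elliptic case: that every singular fiber satisfies the required local inequality is a check against Kodaira's table, not a formal consequence of $e(X)=\sum_i e(F_i)$; either carry out that check or again invoke the metric argument of the cited references.
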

Here a discrete cocompact lattice means a subgroup which is discrete
in $PU(2,1)$, such that the points with non-trivial isotropy are
isolated, these isotropy groups are finite, and the quotient $\BB_{2}/\G$
is compact. A consequence of Theorem \ref{thm:(Kobayashi)-1} is that
in case of equality in \eqref{eq:BMY-1}, there always exists a finite
uniformisation of $\mathcal{X}$ i.e. a smooth ball quotient surface
$Z$ having a finite group of automorphisms $G$ such that $\mathcal{X}=Z/G$.

It is now natural to ask if there is an analog of part B) of Theorem
\ref{thm:(Kobayashi)-1} for surfaces of Kodaira dimension $0$ and
$1$. In this paper we study that problem for surfaces having Kodaira
dimension $\kappa=0$, for which equality $c_{1}^{2}(\mathcal{X})=3c_{2}(\mathcal{X})$
is in fact equivalent to $c_{2}(\mathcal{X})=0$. Let $X$ be a K3
surface, let $\mathcal{C}$ be a configuration of $ADE$ curves on
$X$ and let $X\to\mathcal{X}$ be the contraction of the curves in
$\mathcal{C}$. We obtain the following result:
\begin{thm}
\label{thm:MAIN-1} The equality $c_{1}^{2}(\mathcal{X})=3c_{2}(\mathcal{X})$
holds if and only if there exists a discrete cocompact lattice $\Gamma$
in the affine linear group $\CC^{2}\rtimes GL_{2}(\CC)$ such that
$\mathcal{X}=\CC^{2}/\Gamma$. 
\end{thm}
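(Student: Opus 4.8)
The plan is to translate the numerical equality $c_1^2(\mathcal X)=3c_2(\mathcal X)$ into a statement about the orbifold Euler number, then into a geometric statement using a crepant-type argument and the structure theory of surfaces with trivial canonical bundle. Since $X$ is a K3 surface, $c_1^2(X)=0$ and $c_2(X)=24$, and contracting a configuration $\mathcal C$ of $ADE$ curves to $\mathcal X$ does not change $c_1^2$ (the canonical class is still trivial on $\mathcal X$, as $ADE$ singularities are canonical), so $c_1^2(\mathcal X)=0$. Hence the equality $c_1^2(\mathcal X)=3c_2(\mathcal X)$ is equivalent to $c_2(\mathcal X)=0$. Now $c_2(\mathcal X)$ is computed from $c_2(X)=24$ by subtracting, for each $ADE$ singularity of type $R$, the quantity $e(\tilde R)-\tfrac{1}{|R|}$, where $e(\tilde R)$ is the number of exceptional curves (i.e. $\mathrm{rk}\,R$) plus $1$, and $|R|$ is the order of the corresponding finite subgroup of $SU(2)$; equivalently $c_2(\mathcal X)=24-\sum_R\bigl(\mathrm{rk}(R)+1-\tfrac{1}{|R|}\bigr)$. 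So the first step is to record this formula (it is the orbifold Euler number / Gauss--Bonnet for $ADE$ quotient singularities) and observe that $c_2(\mathcal X)=0$ is a purely combinatorial condition on the configuration $\mathcal C$.

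Next I would run the following dichotomy. By Theorem \ref{thm:(Kobayashi)-1}, applied with $X$ the minimal K3 surface (Kodaira dimension $0$), we have the orbifold BMY inequality $0=c_1^2(\mathcal X)\leq 3c_2(\mathcal X)$, so $c_2(\mathcal X)\geq 0$ always, and the claim is precisely the characterization of when it vanishes. The geometric direction ($\Leftarrow$) is the easy one: if $\mathcal X=\CC^2/\Gamma$ with $\Gamma$ a discrete cocompact subgroup of $\CC^2\rtimes GL_2(\CC)$ acting with isolated fixed points of finite isotropy, then $\mathcal X$ is a compact complex-analytic orbifold uniformized by $\CC^2$, hence its orbifold Euler number $c_2(\mathcal X)=e_{\mathrm{orb}}(\CC^2/\Gamma)$ equals $0$ by multiplicativity of the Euler characteristic in the (orbifold) unramified setting $e(\CC^2)=0$, and the orbifold $c_1^2$ also vanishes because $\CC^2$ is flat; moreover the minimal resolution of such a quotient is indeed a K3 (one checks the isotropy subgroups act as subgroups of $SL_2(\CC)$ fixing a holomorphic $2$-form, so singularities are $ADE$, and $K_{\mathcal X}$ trivial forces the resolution to have trivial canonical bundle, and $b_1=0$ is forced by cocompactness of $\Gamma$ together with the combinatorics, giving K3 rather than Enriques/abelian/bielliptic). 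The real content is the direction ($\Rightarrow$): assuming $c_2(\mathcal X)=0$ I must produce the flat orbifold uniformization.

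For that hard direction the strategy is: from $c_2(\mathcal X)=0$ and $c_1^2(\mathcal X)=0$ conclude, via Kobayashi's equality discussion (the footnote and part B analog for $\kappa=0$), that $\mathcal X$ is an orbifold whose universal orbifold cover is a complete flat (Ricci-flat and $c_2$-flat, hence by the Chern--Gauss--Bonnet integrand being a sum of squares, actually flat) Kähler surface; a simply connected complete flat Kähler surface is $\CC^2$. Then $\pi_1^{\mathrm{orb}}(\mathcal X)=\Gamma$ acts properly discontinuously and cocompactly on $\CC^2$ by holomorphic isometries of the flat metric, and the group of such isometries is exactly $\CC^2\rtimes U(2)\subset \CC^2\rtimes GL_2(\CC)$; the isotropy groups are finite (orbifold) and their fixed loci are isolated points because the singularities are isolated $ADE$ points. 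This yields $\mathcal X=\CC^2/\Gamma$ as desired. The main obstacle I anticipate is making the equality case of the orbifold BMY precise in Kodaira dimension $0$: one must justify that $c_2(\mathcal X)=0$ forces the orbifold to carry a \emph{flat} (not merely Ricci-flat) Kähler--Einstein metric. The clean way is to invoke the existence of an orbifold Kähler--Einstein metric of zero Ricci curvature on $\mathcal X$ (orbifold Aubin--Yau, valid since $K_{\mathcal X}\equiv 0$ and the singularities are quotient singularities), and then observe that equality in BMY forces the pointwise Chern--Weil integrand $3c_2-c_1^2$ to vanish identically; since for a Ricci-flat Kähler metric this integrand is a nonnegative multiple of $|R|^2$ (the full curvature tensor norm), Ricci-flat plus BMY-equality gives curvature identically zero, i.e. flatness. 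Alternatively, and perhaps more in the spirit of the rest of the paper, one classifies the configurations $\mathcal C$ solving the combinatorial identity $\sum_R(\mathrm{rk}(R)+1-1/|R|)=24$ directly and matches each solution against the explicit list of flat orbifold quotients $\CC^2/\Gamma$ (these are finite quotients of complex tori, i.e. the generalized Kummer surfaces classified earlier in the paper together with their further cyclic quotients), thereby obtaining the equivalence by inspection; this reduces Theorem \ref{thm:MAIN-1} to a finite check, using the Kummer-lattice classification from Theorem 1.
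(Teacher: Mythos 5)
Your primary (analytic) route is genuinely different from the paper's. The paper never invokes orbifold K\"ahler--Einstein metrics: it proves the hard direction combinatorially, by (i) listing the ten configurations $\mathcal{C}$ arising from generalized Kummer surfaces (each with $m(\mathcal{C})=24$, so that $\mathcal{X}=T/G=\CC^{2}/\Gamma$ by the classification completed in Section~3), and (ii) enumerating the remaining eight configurations $\mathcal{C}_{1},\dots,\mathcal{C}_{8}$ satisfying $m(\mathcal{C})=24$ and rank $\leq19$, and excluding each of them on a complex K3 by Nikulin-type lattice arguments (even sets, $3$-divisible sets, and Picard-number bounds). Your route --- orbifold Calabi--Yau to produce a Ricci-flat orbifold metric, the observation that in the Ricci-flat case the integrand of $3c_{2}-c_{1}^{2}$ is a nonnegative multiple of $|W|^{2}$ so that equality forces flatness, then the orbifold Bieberbach/developability theorem to get $\mathcal{X}=\CC^{2}/\Gamma$ with $\Gamma\subset\CC^{2}\rtimes U(2)$ --- is the Kobayashi-style argument; if one accepts the analytic ingredients (orbifold Aubin--Yau on $\mathcal{X}$ with trivial orbifold canonical class, and developability of complete flat orbifolds), it bypasses the case analysis entirely and automatically shows that the eight spurious configurations cannot occur. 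What it costs is precisely that machinery; what the paper's approach buys is an elementary proof (modulo the Kummer-lattice classification), an explicit description of $\Gamma$ as an extension of the period lattice of a torus by one of the ten groups $G$, and the nonexistence statements for $\mathcal{C}_{1},\dots,\mathcal{C}_{8}$, which are of independent interest.

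One caveat: your fallback route ``in the spirit of the paper'' is underspecified at exactly the point where the paper does the real work. The combinatorial identity $\sum_{R}(\rk R+1-1/|R|)=24$ with total rank $\leq19$ has $18$ solutions, of which only $10$ match a flat quotient; ``matching each solution against the explicit list'' does not dispose of the other $8$. One must prove that none of $\mathcal{C}_{1},\dots,\mathcal{C}_{8}$ is realized by $ADE$ curves on a complex K3 surface, and this requires the divisibility obstructions of Remark \ref{rem:necessary cond even} case by case; without that step the combinatorial version of the argument is incomplete.
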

Using the now complete classification of generalized Kummer surfaces,
we will in fact see that in case of equality $c_{1}^{2}(\mathcal{X})=3c_{2}(\mathcal{X})$,
the $K3$ $X$ is a generalized Kummer surface, which result implies
Theorem \ref{thm:MAIN-1}. 

We then obtain the same result as Theorem \ref{thm:MAIN-1} for Enriques
surfaces: for an Enriques surface $X$ and $X\to\mathcal{X}$ the
contraction of a configuration $\mathcal{C}$ of $ADE$ curves, one
has $c_{1}^{2}(\mathcal{X})=3c_{2}(\mathcal{X})$ if and only if $\mathcal{C}$
is the union of $8$ disjoint $(-2)$-curves. We moreover construct
the Enriques surfaces containing such a configuration. \\
Among algebraic surfaces with Kodaira dimension $0$, it remains the
Abelian and bi-elliptic surfaces, which satisfy $c_{1}^{2}=3c_{2}=0$.
The universal cover of these surfaces is $\CC^{2}$, and they do not
contain rational curves; therefore the question is closed for surfaces
with $\kappa=0$.

The paper is organized as follows: in Section \ref{sec:Preliminaries}
we recall the notations and the main results we will need, which are
mainly the results of Garbagnati \cite{Garbagnati}. In section \ref{sec:Classification-of-symplectic},
we recall Fujiki's beautiful classification of automorphism groups
of $2$-dimensional tori and we give a more detailed account of the
previous work on generalized Kummer surfaces. Then we describe the
Kummer lattices $K_{G}$ for $G=\hat{Q}_{8}$, $\hat{T}_{24}$, and
prove that if a Kummer surface contains a configuration $\mathcal{C}_{G}$
then it is a generalized Kummer surface $\Km(T,G)$. In section \ref{sec:Kobayashi's-problem-for},
we prove Theorem \ref{thm:MAIN-1} on K3 and Enriques surfaces.

\textbf{Acknowledgements.} The author wishes to thank the referee
for careful reading of the manuscript and useful remarks.

\section{Preliminaries\label{sec:Preliminaries}}

\subsection{Notations}

$\ZZ_{n}=\ZZ/n\ZZ$ is the cyclic group of order $n$. \\
$Q_{8}$ is the quaternion group (order $8,$ has a unique involution
$\iota$, $Q_{8}/\iota\simeq(\ZZ_{2})^{2}$). \\
$D_{12}$ is the binary dihedral group (order $12$).\\
$T_{24}$ is the binary tetrahedral group (order $24$, isomorphic
to $SL_{2}(\FF_{3})$; $Q_{8}$ is a normal subgroup of it).

For $n\in\ZZ$, $[n]:T\to T$ is the multiplication by $n$ map on
a torus $T$. 

For more on the problematic of generalized Kummer surfaces, we recommend
the paper of Garbagnati \cite{Garbagnati}, from which we tried to
follow the notations.

\subsection{Lattices, divisible sets}

For a lattice $L$, we denote by $L^{\vee}$ its dual. The\textit{
length} of $L$ is the minimal number of generators of its \textit{discriminant
group} $L^{\vee}/L$. A sublattice $M$ of $L$ is said \textit{primitive}
if $L/M$ is torsion free. 
\begin{prop}
\label{prop:()-NikuLattice}(\cite[Proposition 1.6.1]{Nikulin2})
Let $L$ be a unimodular lattice, $M$ be a primitive sublattice of
$L$ and $M^{\perp}$ the orthogonal to $M$ in $L$. The discriminant
group of $M$ is isomorphic to the discriminant group of $M^{\perp}$.
In particular, since the length of a lattice is at most the rank of
the lattice, $l(M)=l(M^{\perp})\leq\min(\rk(M),\rk(M^{\perp}))$. 
\end{prop}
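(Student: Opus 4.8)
The plan is to realize the discriminant group of $M$ and that of $N:=M^\perp$ as two quotients of $L$ by one and the same finite-index sublattice, and then to pin down the quotient by a counting argument using unimodularity of $L$. Since $L$ is unimodular (in particular nondegenerate) and $M$ is primitive, over $\QQ$ one has an orthogonal splitting $L\otimes\QQ=(M\otimes\QQ)\oplus(N\otimes\QQ)$, so $M\oplus N$ is a finite-index sublattice of $L$, and every $l\in L$ decomposes uniquely as $l=l_M+l_N$ with $l_M\in M\otimes\QQ$, $l_N\in N\otimes\QQ$. First I would check that in fact $l_M\in M^\vee$ and $l_N\in N^\vee$, since $(l_M,m)=(l,m)\in\ZZ$ for all $m\in M$ and symmetrically for $N$; hence the two orthogonal projections descend to group homomorphisms $\bar\pi_M\colon L\to M^\vee/M$ and $\bar\pi_N\colon L\to N^\vee/N$.

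The crucial step is to compute the kernels and show $\ker\bar\pi_M=\ker\bar\pi_N=M\oplus N$. Indeed, if $l_M\in M$ then $l-l_M=l_N$ lies in $L\cap(N\otimes\QQ)=N$ (the last equality because $N=M^\perp$ is automatically primitive), so $l\in M\oplus N$; the reverse inclusion is clear. The identical argument for $\bar\pi_N$ uses $(M^\perp)^\perp=M$, which is exactly where primitivity of $M$ enters. Consequently $L/(M\oplus N)$ embeds into $M^\vee/M$ \emph{and} into $N^\vee/N$, so $[L:M\oplus N]\le|M^\vee/M|$ and $[L:M\oplus N]\le|N^\vee/N|$. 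On the other hand, the discriminant of a lattice is multiplied by the square of the index under a finite-index extension, and $L$ is unimodular, so $[L:M\oplus N]^2=|M^\vee/M|\cdot|N^\vee/N|$. Since $[L:M\oplus N]$ is at most each of the two orders while its square equals their product, both must in fact equal $[L:M\oplus N]$; hence $\bar\pi_M$ and $\bar\pi_N$ are isomorphisms, and $M^\vee/M\cong L/(M\oplus N)\cong N^\vee/N$, which is the first assertion.

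For the ``in particular'' statement I would only need that for a nondegenerate lattice $L'$ the dual $L'^\vee$ is free of rank $\rk L'$, so the discriminant group $L'^\vee/L'$, being a quotient of it, is generated by at most $\rk L'$ elements, i.e.\ $l(L')\le\rk L'$. Applying this to $M$ and to $M^\perp$, and using the isomorphism just established (which gives $l(M)=l(M^\perp)$), yields $l(M)=l(M^\perp)\le\min(\rk M,\rk M^\perp)$.

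I expect the only genuinely delicate point to be the index bookkeeping in the second paragraph — specifically the observation that one must embed $L/(M\oplus N)$ into \emph{both} discriminant groups (not just one) in order to force equality through the identity $[L:M\oplus N]^2=|M^\vee/M|\cdot|N^\vee/N|$; everything else is routine manipulation with dual lattices and orthogonal projections.
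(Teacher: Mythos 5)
Your proof is correct and complete; the paper itself gives no argument for this statement (it simply cites Nikulin), and your proof is essentially the standard one from Nikulin's paper: embed $L/(M\oplus M^{\perp})$ into both discriminant groups and force equality via $[L:M\oplus M^{\perp}]^{2}=|M^{\vee}/M|\cdot|(M^{\perp})^{\vee}/M^{\perp}|$. The only point worth flagging is that the orthogonal splitting of $L\otimes\QQ$ (and indeed the finiteness of $M^{\vee}/M$) requires $M$ to be nondegenerate, which is implicit in the statement under Nikulin's conventions but is not literally a consequence of $L$ being unimodular and $M$ primitive.
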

A set of disjoint smooth rational curves $(C_{i})_{i\in I}$ on a
surface $X$ is called\textit{ even }if there exist an invertible
sheaf $\mathcal{L}$ such that $\OO(\sum_{i}C_{i})=\mathcal{L}^{\otimes2}$
. On a K3 surface, an even set contains $8$ or $16$ disjoint curves.
\\
Let $C_{i}^{j},\,1\leq i\leq n,\,j\in\{1,2\}$ be a set of $n$ disjoint
$A_{2}$ configurations (so that $C_{i}^{1}C_{i}^{2}=1$) on a surface.
The divisor $D=\sum_{i=1}^{n}C_{i}^{1}+2C_{i}^{2}$ is called \textit{$3$-divisible}
if $\OO(D)=\mathcal{L}^{\otimes3}$ where $\mathcal{L}$ is an invertible
sheaf. On a K3 surface, the support of a $3$-divisible divisor contains
$6$ or $9$ disjoint $A_{2}$ configurations.\\
 We will use repeatedly the following consequence of Proposition \ref{prop:()-NikuLattice}:
\begin{lem}
\label{lem:8 nodes}Let $X$ be a K3 surface containing $12$ disjoint
$(-2)$-curves. Then there exists an even set supported on $8$ of
these curves. \\
Let $X$ be a K3 surface containing $13$ disjoint $(-2)$-curves.
Then there exist two linearly independent even sets of curves, supported
on $12$ of these curves.
\end{lem}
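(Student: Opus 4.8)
The plan is to use Nikulin's machinery via Proposition \ref{prop:()-NikuLattice}. Let $C_1,\dots,C_{12}$ be the disjoint $(-2)$-curves and let $N=\langle C_1,\dots,C_{12}\rangle\subset\NS(X)\subset H^2(X,\ZZ)$ be the lattice they span, so $N\cong A_1^{12}$ with discriminant group $(\ZZ/2\ZZ)^{12}$. Let $M$ be the primitive closure of $N$ in $H^2(X,\ZZ)$, which is unimodular of signature $(3,19)$. Since $\rk M\le 22$ and $N$ has finite index in $M$, write the index as $2^k$; then $M/N\subset N^\vee/N=(\ZZ/2\ZZ)^{12}$ is an isotropic subgroup (with respect to the discriminant quadratic form $q_N$, which takes values $-\tfrac12$ on each generator) of order $2^k$, and the discriminant group of $M$ is $(M/N)^\perp/(M/N)$, of length $12-2k$. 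By Proposition \ref{prop:()-NikuLattice} applied to $M\subset H^2(X,\ZZ)$, we get $l(M)=l(M^\perp)\le\rk(M^\perp)=22-\rk(M)=22-12=10$, hence $12-2k\le 10$, i.e.\ $k\ge 1$. Thus there exists a nonzero isotropic vector in $M/N\subset(\ZZ/2\ZZ)^{12}$, i.e.\ a nonempty subset $S\subseteq\{1,\dots,12\}$ such that $\tfrac12\sum_{i\in S}C_i$ lies in $H^2(X,\ZZ)$; isotropy for $q_N$ forces $|S|\equiv 0\pmod 4$, and a half-integral class of this form is actually integral only if $|S|\in\{8,16\}$ on a K3 (the standard even-set dichotomy: an even set of $(-2)$-curves on a K3 has $8$ or $16$ elements, which one proves by intersecting with the even class and using that $\tfrac12\sum_{i\in S}C_i$ has self-intersection $-\tfrac12|S|\in 2\ZZ$ together with the fact that it cannot be too large). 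Since we only have $12$ curves available, $|S|=8$, giving the desired even set.

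For the second statement, suppose $X$ contains $13$ disjoint $(-2)$-curves $C_1,\dots,C_{13}$, spanning $N\cong A_1^{13}$ with $N^\vee/N=(\ZZ/2\ZZ)^{13}$. Arguing exactly as above with $M$ the primitive closure, $M/N$ is isotropic in $(\ZZ/2\ZZ)^{13}$ of order $2^k$, the length of the discriminant group of $M$ is $13-2k$, and Proposition \ref{prop:()-NikuLattice} gives $13-2k\le 22-13=9$, so $k\ge 2$. Hence $M/N$ is an isotropic subgroup of $(\ZZ/2\ZZ)^{13}$ of rank at least $2$: it contains two linearly independent isotropic vectors $v_1,v_2$. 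Each $v_j$ corresponds to a subset $S_j\subseteq\{1,\dots,13\}$ with $\tfrac12\sum_{i\in S_j}C_i$ integral, hence $|S_j|\in\{8,16\}$; since $|S_j|\le 13$ we get $|S_j|=8$ for $j=1,2$, so we obtain two linearly independent even sets. It remains to check the supports together involve only $12$ of the $13$ curves: the symmetric difference $S_1\triangle S_2$ corresponds to $v_1+v_2$, which is again isotropic of the above form, hence $|S_1\triangle S_2|\in\{8,16\}$, forcing (with $|S_1|=|S_2|=8$ and inclusion-exclusion $|S_1\triangle S_2|=16-2|S_1\cap S_2|$) the value $|S_1\triangle S_2|=8$ and $|S_1\cap S_2|=4$; thus $|S_1\cup S_2|=12$, so the two even sets are supported on $12$ of the $13$ curves.

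The main obstacle is the even-set dichotomy "$8$ or $16$" — identifying which subsets $S$ actually give integral classes and ruling out all sizes other than $8$ and $16$. The clean way is: an even class $e=\tfrac12\sum_{i\in S}C_i$ with $e\in H^2(X,\ZZ)$ satisfies $e^2=-\tfrac14\cdot|S|\cdot(-2)\cdot(-2)$... more precisely $e^2=-\tfrac{|S|}{2}$, which must be an even integer, forcing $4\mid|S|$; and then by considering $\langle e, C_i\rangle$ one shows the classes $C_i-2e$ for $i\in S$ are again effective $(-2)$-classes in an $A_1^{|S|}$-configuration, so a descending/parity argument (or directly Nikulin's computation of the Kummer lattice, cf.\ the $\ZZ/2\ZZ$ case recalled in the introduction) pins $|S|$ to $8$ or $16$. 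I would cite Nikulin \cite{Nikulin} for this last step rather than reprove it, since it is exactly the structure theory of even sets on K3 surfaces, and only the counting estimates $k\ge 1$ (resp.\ $k\ge 2$) coming from Proposition \ref{prop:()-NikuLattice} are the new input needed here.
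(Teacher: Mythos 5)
Your proof is correct and follows essentially the same route as the paper: the length bound from Proposition \ref{prop:()-NikuLattice} applied to the primitive closure of $A_1^{12}$ (resp.\ $A_1^{13}$), combined with the standard fact that an even set on a K3 has $8$ or $16$ curves. The only difference is in the second half, where the paper deduces the two even sets by removing a curve from the first even set and reapplying part one, whereas you get $k\ge 2$ directly from the length count; your explicit symmetric-difference computation showing $|S_1\cup S_2|=12$ is a detail the paper leaves implicit.
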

\begin{proof}
This is well known, see eg \cite[Remark 8.10]{GS}. The discriminant
group of the lattice generated by the $12$ curves is $(\ZZ_{2})^{12}$,
it has length $12>\min(12,22-12)$, thus there exist at least one
non-trivial divisible class.\\
The second part follows e.g. from the first: there is an even set,
then remove one curve from that even set, there exists still a set
of $12$ disjoint curves, thus another even set, with a different
support.
\end{proof}
In the present paper an \textit{$ADE$ configuration} $\mathcal{C}$
on a surface will have a polysemic meaning. It could mean a set of
$ADE$ singularities on a surface $\mathcal{X}$, or the set of the
exceptional curves of its minimal resolution $X\to\mathcal{X}$. \\
For numbers $\a_{n},\d_{n},\e_{n}\in\NN^{*}$ with $\d_{i}=0$ for
$i\leq3$ and $\e_{i}=0$ for $i\notin\{6,7,8\}$, we write symbolically
$\mathcal{C}=\sum_{n\geq1}\a_{n}A_{n}+\d_{n}D_{n}+\e_{n}E_{n}$ if
for any $n\geq1$, $\mathcal{C}$ contains $\a_{n}$ (resp. $\d_{n}$,
$\e_{n}$) configurations of type $A_{n}$ (resp. $D_{n},E_{n}$).

Let $\mathcal{C}$ be a $ADE$ configuration. We are looking for obstructions
or criteria for some sub-configurations of $\mathcal{C}$ to be part
of an even set or a $3$-divisible set. In Remark \ref{rem:necessary cond even}
below, when we speak of a configuration $A_{n}$ or $D_{n}$, we implicitly
assume it is maximal in $\mathcal{C}$, i.e. it is not contained in
a $A_{m}$ or $D_{m}$ contained in $\mathcal{C}$ for some $m>n$.
\begin{rem}
\label{rem:necessary cond even}A) An irreducible component $C$ of
a configuration $A_{n}$ in $\mathcal{C}$ can be part of an even
set $E$ if and only if $n$ is odd, the $\frac{n+1}{2}$ disjoint
curves in that $A_{n}$ configuration are in $E$ and $C$ is among
these curves, since otherwise there always exists a curve $C'$ supported
on $A_{n}$ such that $C'E=1$, and therefore $E$ cannot be even.\\
B) The discriminant group of $D_{n}$ is $(\ZZ_{2})^{2}$ if $n\geq4$
is even and is $\ZZ_{4}$ if $n$ is odd (see \cite[Theorem 2.3.5]{Griess}).
Accordingly, $k$ disjoint curves and the $2$ extremal disjoint closest
curves on a $D_{2k}$ can be possibly part of an even set; the two
closest extremal disjoint curves on a $D_{2k+1}$ can be possibly
part of an even set. \\
C) The discriminant group of $A_{n}$ is $\ZZ_{n+1}$.  Since $\ZZ_{4}$
does not contain the group $\ZZ_{3}$, a sub-configuration $A_{2}$
of a configuration $A_{3}$ in $\mathcal{C}$ cannot be part of a
$3$-divisible set. There is no such an obstruction for the two disjoint
$A_{2}$ in a configuration $A_{5}$.
\end{rem}

\subsection{Double, bi-double and triple covers, lifts of automorphisms}

To an even set $E$ (resp. a $3$-divisible divisor $E=\sum_{i=1}^{n}C_{i}^{1}+2C_{i}^{2}$)
on a K3 surface $X$, one can associate a double (resp. triple) cyclic
cover of $X$ branched on the support of $E$. The minimal desingularisation
$Y$ of that cyclic cover has an involution (resp. an automorphism
of order $3$) $\tau$ such that $Y/\tau$ is (isomorphic to) $\mathcal{X}$,
the surface obtained by contracting the curves on the support of $E$.
We call $Y$ the \textit{surface associated} to $E$. 
\begin{lem}
\label{lem:An-automorphism-lift}Let $E$ be an even set on a K3 surface
$X$ and let $Y$ be the surface associated to $E$.\\
A) An automorphism $\s$ of order $n$ of the K3 surface $X$ lifts
to $Y$ if and only if $E=\s^{*}E$. \\
B) Suppose that $\s$ lifts to an automorphism $\s'\in\aut(Y)$. Let
$\tau$ be an element of the transformation group of the cover $Y$
(thus $Y/\tau$ is birational to $X$). There is an exact sequence
\[
0\to\la\tau\ra\to\la\tau,\s'\ra\to\la\s\ra\to1.
\]
\end{lem}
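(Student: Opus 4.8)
The plan is to deduce (A) from the standard lifting criterion for cyclic covers together with torsion-freeness of $\Pic$ of a K3 surface, and to obtain (B) by a short computation with the Galois group of the cover $Y\to\mathcal X$, where $\mathcal X$ denotes the surface obtained by contracting the $C_i$.

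\emph{Part (A).} By construction $Y$ is built, by a canonical resolve-and-contract procedure, from the double cover of $X$ attached to the even-set data: a square root $\mathcal L$ of $\OO_X(\sum_iC_i)$ together with the section $s$ of $\OO_X(\sum_iC_i)=\mathcal L^{\otimes2}$ cutting out $\sum_iC_i$. Two facts make the argument short. First, $\sum_iC_i$ is the unique effective divisor in its class (any effective divisor linearly equivalent to it meets each $C_i$ negatively, hence contains it), so for $\s\in\aut(X)$ the condition $\s^*E=E$ in $\NS(X)$ is equivalent to $\s$ permuting the curves $C_i$, and to $\s^*(\sum_iC_i)=\sum_iC_i$ as divisors. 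Second, $\Pic(X)$ is torsion free, so $\mathcal L$ is the unique square root of $\OO_X(\sum_iC_i)$, and hence $\s^*\mathcal L\cong\mathcal L$ as soon as $\s^*(\sum_iC_i)=\sum_iC_i$. Now if $\s^*E=E$, then $\s^*\mathcal L\cong\mathcal L$ and $\s^*s$ has the same (rigid) zero divisor as $s$, hence differs from $s$ by a nonzero scalar, which is a square; by the cyclic-cover lifting criterion $\s$ lifts to the double cover, and, permuting the ramification curves (which lie over the $C_i$) and the exceptional curves of the resolution, it lifts to $Y$, compatibly with the maps to $X$. Conversely, a lift $\s'\in\aut(Y)$ of $\s$ descends to an automorphism $\bar\s$ of $\mathcal X=Y/\tau$ corresponding to $\s$ under the minimal resolution $X\to\mathcal X$; since $\bar\s$ permutes the singular points of $\mathcal X$, its lift $\s$ to $X$ permutes the exceptional curves $C_i$, whence $\s^*E=E$.

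\emph{Part (B).} Set $G_0=\la\tau\ra=\Gal(Y/\mathcal X)$ and $H=\la\tau,\s'\ra$. Because $\s'$ descends to $\bar\s\in\aut(\mathcal X)$, conjugation by $\s'$ sends a deck transformation of $Y\to\mathcal X$ to a deck transformation, so $G_0\trianglelefteq H$. The induced homomorphism $H\to\aut(\mathcal X)$ kills $\tau$ and sends $\s'$ to $\bar\s$, so its image is $\la\bar\s\ra$, which via $X\to\mathcal X$ is isomorphic to $\la\s\ra$ and so has order $n$. Finally, if $(\s')^{k}\tau^{j}$ lies in the kernel, then $\bar\s^{\,k}=\mathrm{id}$, so $n\mid k$, so $(\s')^{k}$ descends to $\mathrm{id}_{\mathcal X}$ and thus $(\s')^{k}\in\Gal(Y/\mathcal X)=G_0$; hence $(\s')^{k}\tau^{j}\in G_0$, and the kernel is exactly $G_0$. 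This yields the exact sequence $0\to\la\tau\ra\to\la\tau,\s'\ra\to\la\s\ra\to1$. The triple-cover case attached to a $3$-divisible set is identical, with $G_0\cong\ZZ/3\ZZ$.

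\emph{Main obstacle.} The only delicate point is the bookkeeping in (A): one must verify that a lift of $\s$ to the double cover really produces an automorphism of the smooth surface $Y$ of the statement, and that it is compatible with the two maps $Y\to\mathcal X\leftarrow X$ that define what it means for $\s'$ to be a lift of $\s$. This is automatic because whatever is contracted or resolved in passing from the double cover to $Y$ lies over loci (the $C_i$, respectively the singular points of $\mathcal X$) that $\s$ permutes; modulo this, (A) reduces to the cyclic-cover lifting lemma plus torsion-freeness of $\Pic(X)$, and (B) is elementary group theory.
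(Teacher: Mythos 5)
Your proof is correct and follows essentially the same route as the paper: part (A) rests on the standard cyclic-cover lifting criterion together with the fact that $\Pic(X)=\NS(X)$ is torsion free for a K3 (the paper simply cites \cite[Proposition 4.2]{PRR} for this, whereas you spell the argument out), and part (B) is the same normality-of-$\la\tau\ra$ computation with kernel equal to the deck group. No gaps.
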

\begin{proof}
A K3 surface satisfies $\NS(X)=\Pic(X)$, then part A) is \cite[Proposition 4.2]{PRR}.
Part B) follows from the fact $\s'\tau\s'^{-1}$ is a lift of the
identity, thus a power of $\tau$ and $\la\tau\ra$ is normal in $\la\tau,\s'\ra$;
the map $\la\tau,\s'\ra\to\la\s\ra$ maps a lift $\mu'$ of $\mu\in\aut(X)$
to $\mu$.
\end{proof}
A bi-double cover $Y\to X$ of a surface $X$ is a Galois cover of
group $(\ZZ_{2})^{2}$. It is determined by divisors $D_{1},D_{2},D_{3}$
and invertible sheaves $L_{1},L_{2},L_{3}$ such that for $\{i,j,k\}=\{1,2,3\}$,
one has 
\begin{equation}
2L_{i}\equiv D_{j}+D_{k},\label{eq:relations even}
\end{equation}
(see \cite{Catanese}). The surface $Y$ is embedded in the total
space of the vector bundle $\mathbb{L}=L_{1}\oplus L_{2}\oplus L_{3}$
as the variety with equation 
\[
\rk\left(\begin{array}{ccc}
x_{1} & w_{3} & w_{2}\\
w_{3} & x_{2} & w_{1}\\
w_{2} & w_{1} & x_{3}
\end{array}\right)=1,
\]
where $D_{i}=\Div(x_{i})$ and $w_{1},w_{2},w_{3}$ are coordinates
of the $L_{i}$. 
\begin{example}
Let $E=\sum_{i=1}^{12}C_{i}$ be a $12A_{1}$ configuration on a K3
surface, such that $E$ has $2$ linearly independent even sets $\ell_{1},\,\ell_{2}$.
Up to reordering, one can suppose 
\[
\ell_{1}=\sum_{i=5}^{12}C_{i},\,\,\ell_{2}=\sum_{i=1}^{4}C_{i}+\sum_{i=9}^{12}C_{i}.
\]
Then $\ell_{3}=\sum_{i=1}^{8}C_{i}$ is also even. Let be $L_{i}:=\frac{1}{2}\ell_{i}$
and $D_{j}=E-\ell_{j}$. The datas $D_{i},\,L_{j}$ satisfy the relations
\ref{eq:relations even} and determine a bi-double cover $Y\to X$;
$Y$ is a smooth K3. 
\end{example}
Let $\s$ be an automorphism of a smooth surface $X$ admitting a
bi-double cover determined by divisors $D_{i},\,i\in\{1,2,3\}$ and
invertible sheaves $L_{i},\,i\in\{1,2,3\}$ as above. Suppose that
there is an action of $\sigma$ on $\{1,2,3\}$ such that $\s^{*}L_{i}=L_{\s i}$
and $\s^{*}D_{i}=D_{\s i}$. Then
\begin{lem}
\label{lem:lift order 3 to bidoubles}The automorphism $\s$ lifts
to an automorphism of $Y$. 
\end{lem}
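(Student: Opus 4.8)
The plan is to construct an explicit lift of $\s$ to $Y$ using the description of $Y$ as a subvariety of the total space of $\mathbb{L}=L_1\oplus L_2\oplus L_3$. Since $\s^*L_i=L_{\s i}$, the automorphism $\s$ induces (after choosing identifications) an isomorphism of the total space of $\mathbb{L}$ covering $\s$ which permutes the three line-bundle factors according to the permutation $\s$ acts by on $\{1,2,3\}$; concretely, if $w_i$ is a fibre coordinate on $L_i$, the lift sends $w_i$ to (a constant multiple of) $w_{\s^{-1}i}\circ\s$. First I would make this precise: pulling back the defining data, $\s^* x_i$ is a section of $\OO(D_{\s i})$ vanishing on $\s^{-1}D_{\s i}=D_i$, hence $\s^*x_i=c_i x_i$ for some nowhere-zero constant $c_i$ (here one uses that $D_i$ is reduced, or at least that $x_i$ is determined up to scalar), and similarly the transition functions of $L_i$ pull back to those of $L_{\s i}$ up to scalars $\lambda_i$.

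Next I would check that the resulting map on the total space of $\mathbb{L}$ preserves the equation
\[
\rk\left(\begin{array}{ccc}
x_{1} & w_{3} & w_{2}\\
w_{3} & x_{2} & w_{1}\\
w_{2} & w_{1} & x_{3}
\end{array}\right)=1.
\]
The rank-one condition is equivalent to the vanishing of the three off-diagonal $2\times 2$ minors $x_jx_k=w_i^2$ (indices $\{i,j,k\}=\{1,2,3\}$) together with $x_iw_i=w_jw_k$. Applying $\s$ permutes these equations among themselves: the relation $x_jx_k=w_i^2$ goes to $c_{\s j}c_{\s k}\,x_{\s j}x_{\s k}=(\text{const})\,w_{\s i}^2$, which is again one of the three equations up to a nonzero scalar, and similarly for the $x_iw_i=w_jw_k$ relations. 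So after possibly rescaling the fibre coordinates $w_i$ by suitable constants — a finite linear algebra computation to make all the scalars match, which is where $2L_i\equiv D_j+D_k$ is used to guarantee consistency — the map restricts to an automorphism $\s'$ of $Y$ lying over $\s$.

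The main obstacle is precisely this bookkeeping of scalars: one must verify that the constants $c_i$, the transition-function scalars $\lambda_i$, and the freedom to rescale $w_i$ can be simultaneously arranged so that \emph{all} defining equations are genuinely preserved (not merely up to independent scalars on each equation), and that this can be done compatibly with $\s$ having finite order if one wants $\s'$ finite — though for the mere existence of a lift, finite order is not needed. A clean way to package this is to argue cohomologically: the obstruction to lifting lives in a group built from $H^1(X,\OO_X^\times)$-data twisted by the $(\ZZ_2)^2$-action, and the hypotheses $\s^*L_i=L_{\s i}$, $\s^*D_i=D_{\s i}$ say exactly that the class defining $Y$ is $\s$-invariant, so the cover $\s^*Y$ is isomorphic to $Y$ over $X$ and any such isomorphism composed with the canonical map $\s^*Y\to Y$ gives the desired $\s'$. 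I would present the explicit-coordinates version as the main proof and remark that it also follows from the invariance of the defining class, mirroring the structure of Lemma \ref{lem:An-automorphism-lift}.
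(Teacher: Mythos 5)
Your proposal is correct and follows essentially the same route as the paper: lift $\s$ to the total space of $\mathbb{L}$ by permuting the factors $L_i$ and the coordinates $w_i,x_i$ accordingly, and check that the rank-one equations defining $Y$ are permuted among themselves, hence preserved. The paper simply asserts that coordinates can be chosen with $\s^{*}w_{i}=w_{\s i}$ and $\s^{*}x_{i}=x_{\s i}$; the scalar bookkeeping you flag does close (writing $\s^{*}x_{i}=c_{i}x_{\s i}$, one takes $\mu_{i}=\sqrt{c_{j}}\sqrt{c_{k}}$ as the rescaling of $w_{i}$, which satisfies both $\mu_{i}^{2}=c_{j}c_{k}$ and $c_{i}\mu_{i}=\mu_{j}\mu_{k}$), so your argument is complete in substance.
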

\begin{proof}
One can choose coordinates $w_{i}$ so that $\s^{*}w_{i}=w_{\s i}$
and equations $x_{i}$ of $D_{i}$ such that $\s^{*}x_{i}=x_{\s i}$.
Then the automorphism $\s$ lifts to an automorphism of $\mathbb{L}$,
and the equations of $Y$ are preserved, thus it restricts to an automorphism
of $Y$. 
\end{proof}

\subsection{Roots of a lattice and $(-2)$-curves}

Let $X$ be an algebraic K3 surface. Let $h$ be a pseudoample divisor
on $X$ (i.e. $h^{2}>0$ and $hD\geq0$ for all effective divisor
$D$) and let 
\[
L=h^{\perp}:=\{l\in\NS(X)\text{ such that }lh=0\}
\]
be the orthogonal of $h$ in $\NS(X)$. We will use the following
result proved by Garbagnati \cite[Proposition 3.2]{Garbagnati} (see
also \cite[Lemma 3.1]{Bertin} of Bertin):
\begin{prop}
\label{prop: Bertin-Garbagati} Let us assume that there exists a
root lattice $R$ such that: \\
(1) $L$ is an overlattice of finite index of $R$ \\
(2) the roots of $R$ and of $L$ coincide. \\
Then there exists a basis of $R$ which is supported on smooth irreducible
rational curves. 
\end{prop}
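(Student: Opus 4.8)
The plan is to identify the root lattice $R$ with the lattice spanned by those $(-2)$-curves of $X$ that lie in $h^{\perp}$, and then to check that these curves form a $\ZZ$-basis of it. Two preliminary observations set this up. Since $h^{2}>0$, the Hodge index theorem makes $L=h^{\perp}$ negative definite; hence if $C$ is an irreducible curve with $C\cdot h=0$, its class lies in $L$ and is non-zero (a non-zero effective divisor pairs positively with any ample class), so $C^{2}<0$, and then $C^{2}=-2$ by adjunction on the K3 surface $X$, so $C$ is a smooth rational curve. Next, by hypothesis (2) the roots of $L$ are exactly the roots of the root lattice $R$, hence finite in number; since two distinct irreducible curves meet non-negatively while a $(-2)$-curve has self-intersection $-2$, distinct $(-2)$-curves have distinct classes, so $X$ carries only finitely many $(-2)$-curves orthogonal to $h$, say $C_{1},\dots,C_{m}$. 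Put $R':=\la C_{1},\dots,C_{m}\ra_{\ZZ}\subseteq L$.

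The first real step is to show that every root of $L$ lies in $R'$. Let $r\in L$ with $r^{2}=-2$. Riemann--Roch on the K3 surface (recall $K_{X}=0$) gives $h^{0}(\OO_{X}(r))+h^{0}(\OO_{X}(-r))\geq\chi(\OO_{X}(r))=1$; since we only care whether $r$ belongs to the subgroup $R'$, we may assume $r$ is effective, say $r=\sum_{j}a_{j}C'_{j}$ with $a_{j}\geq1$ and the $C'_{j}$ irreducible. As $h$ is nef and $r\cdot h=0$, each $C'_{j}\cdot h=0$, so by the first observation each $C'_{j}$ is one of $C_{1},\dots,C_{m}$, whence $r\in R'$. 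Now $R$, being a root lattice, is generated over $\ZZ$ by its roots, which by hypothesis are the roots of $L$; hence $R\subseteq R'$. Conversely each $C_{i}$ is a $(-2)$-class in $L$, hence a root of $L$, hence a root of $R$, so $C_{i}\in R$ and $R'\subseteq R$. Therefore $R=R'$.

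It remains to see that $C_{1},\dots,C_{m}$ are $\ZZ$-linearly independent: from a non-trivial relation, collecting the terms with positive and with negative coefficients produces a non-zero effective divisor $D$ supported on the $C_{i}$ with $D^{2}\geq0$ (all cross terms $C_{i}\cdot C_{j}$, $i\neq j$, being $\geq0$), contradicting the negative definiteness of $L$. Hence $\{C_{1},\dots,C_{m}\}$ is a $\ZZ$-basis of $R=R'$ consisting of smooth irreducible rational curves, which is the assertion. Hypothesis (1) is consistent with this picture: it says precisely that $R=R'$ has finite index in $L$, i.e. that the $C_{i}$ span $L\otimes\QQ$; from negative definiteness one then also recovers that the dual graph of the $C_{i}$ is a disjoint union of $ADE$ Dynkin diagrams, though this refinement is not needed for the statement.

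The one point that needs genuine care is the middle step: one must know that every $(-2)$-class in $h^{\perp}$ is, up to sign, effective, and that the irreducible components of the resulting divisor are again $(-2)$-curves orthogonal to $h$. This is exactly where Riemann--Roch, the bigness and nefness of $h$, and the Hodge index theorem all enter; once it is available, the identification $R=R'$ and the basis property are routine.
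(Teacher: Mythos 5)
Your proof is correct, and it is essentially the standard argument: Riemann--Roch to make each root $\pm$effective, pseudoampleness of $h$ to force the components into $h^{\perp}$ and hence to be $(-2)$-curves, and negative definiteness of $L=h^{\perp}$ for the linear independence. The paper itself gives no proof but cites Bertin and Garbagnati, whose arguments proceed along the same lines, so there is nothing to flag.
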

\begin{rem}
According to a recent preprint of Schütt \cite{Schutt}, hypothesis
(2) is always satisfied.
\end{rem}
Let $X$ be a K3 surface and $F\subset\NS(X)$ be a sub-lattice. A
minimal primitive sub-lattice of $H^{2}(X,\ZZ)$ containing $F$ is
a lattice $K_{F}$ containing $F$ such that $K_{F}/F$ is finite
and $H^{2}(X,\ZZ)/K_{F}$ is free. That lattice $K_{F}$ is unique
and is equal to the lattice $\NS(X)\cap F\otimes\QQ$.

Let $X$ be a non-algebraic K3 surface. By Grauert's ampleness criterion
for complex surfaces, since $X$ is not algebraic any divisor on $X$
has self intersection $\leq0$. Therefore the irreducible curves are
$-2$-curves or of arithmetic genus $1$. By Riemann-Roch, if there
is a curve of arithmetic genus $1$, then it is a fiber of a fibration
$X\to\PP^{1}$. That fibration is then unique and contracts every
$(-2)$-curves on $X$ (this is still because the divisors have self-intersection
$\leq0$). The class of a fiber generates the kernel of the natural
map $\NS(X)\to\text{Num}(X)$, in particular $\NS(X)$ is degenerate.
Therefore since the signature of the intersection form on $H^{1,1}$
is $(1,19)$, if $\NS(X)$ contains a negative definite sub-group
of rank $19$ then there is no curves of arithmetic genus $1$ on
the non algebraic K3 surface $X$.
\begin{rem}
\label{rem:SimpleBase}Let $X$ be a non algebraic K3 surface containing
no curves of arithmetic genus $1$. The negative definite lattice
$\NS(X)$ has rank $\rho\leq19$ and $\rho$ is equal to the number
of $(-2)$-curves on $X$. In particular the minimal primitive sub-lattice
containing the $(-2)$-curves on $X$ is $\NS(X)$ itself.\\
Let $\d$ be a $(-2)$-class on $X$ i.e. an element of $\NS(X)$
such that $\d\in NS(X)$ satisfies $\d^{2}=-2$. By Riemann-Roch $-\d$
or $\d$ is effective; say $\d\geq0$ : there exists $(-2)$-curves
$C_{i}$ such that $\d=\sum m_{i}C_{i}$, with $m_{i}\geq1$. Therefore
the $(-2)$-classes on $X$ form a root system of the lattice $F$
generated by the $(-2)$-curves on $X$ and the $(-2)$-curves form
a simple base $B$ of $F$. One can then apply \cite[Lemma 3.2]{Bertin}
(see also \cite[Remark 4.5]{Garbagnati}) and conclude that if one
has a direct sum decomposition as root lattice $F=\sum_{n\geq1}A_{n}^{\oplus\a_{n}}$,
then for each of the factors $A_{n}$ there is a simple base constituted
of $(-2)$-curves.
\end{rem}

\subsection{Orbifold settings}

Let $\mathcal{C}=\sum_{n\geq1}\a_{n}A_{n}+\d_{n}D_{n}+\e_{n}E_{n}$
be a $ADE$ configuration of curves on a smooth surface $X$. Let
us define the quantity
\[
m(\mathcal{C}):=\sum_{n\geq1}(\a_{n}+\d_{n}+\e_{n})(n+1)-\sum_{n\geq1}\frac{\a_{n}}{n+1}-\sum_{n\geq4}\frac{\d_{n}}{4(n-2)}-\frac{\e_{6}}{24}-\frac{\e_{7}}{48}-\frac{\e_{8}}{120}.
\]
Let $X\to\mathcal{X}$ be the contraction map of the curves contained
in $\mathcal{C}$. Since $\mathcal{X}$ contains only $ADE$ singularities,
the orbifold Chern numbers of $\mathcal{X}$ are 
\[
c_{1}^{2}(\mathcal{X})=K_{X}^{2},\,\text{ and }c_{2}(\mathcal{X})=c_{2}(X)-m(\mathcal{C}),
\]
(see e.g. \cite{RR}). Let $X$ be a K3 surface. The orbifold Miyaoka-Yau
inequality \ref{eq:BMY-1} tells us that 
\[
m(\mathcal{C})\leq24.
\]
Moreover, since each configuration $A_{n},D_{n}$ or $E_{n}$ contributes
for a $n$-dimensional subspace in the negative definite part (of
rank at most $19$) of the Néron-Severi group, one has the restriction
\[
\sum_{n\geq1}n(\a_{n}+\d_{n}+\e_{n})\leq19.
\]
Suppose that the $K3$ orbifold $\mathcal{X}$ has a finite uniformization
$Y\to\mathcal{X}$, i.e. $Y$ is a smooth surface with an action by
a finite group $G$ of order $n$ such that $\mathcal{X}=Y/G$ and
$Y\to Y/G$ is ramified in codimension $2$. Then 
\begin{lem}
The Chern numbers of $Y$ are $K_{Y}^{2}=nc_{1}^{2}(\mathcal{X})$
and $c_{2}(Y)=nc_{2}(\mathcal{X})$. The surface $Y$ is Abelian or
a K3.
\end{lem}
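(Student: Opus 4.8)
The plan is to argue directly from the orbifold Chern number identities and the Bogomolov-Miyaoka-Yau inequality for $Y$. First I would use the behaviour of Chern numbers under a finite cover that is unramified in codimension $1$: if $\pi\colon Y\to\mathcal{X}$ has degree $n$ and $Y\to Y/G=\mathcal{X}$ is ramified only in codimension $2$, then orbifold Chern numbers multiply by $n$, so $K_Y^2=n\,c_1^2(\mathcal{X})$ and $c_2(Y)=n\,c_2(\mathcal{X})$. This is the standard multiplicativity of orbifold Euler characteristics and of $c_1^2$ (the canonical class pulls back, since the ramification divisor is empty in codimension $1$); I would cite this as in \cite{RR} or the references already in play. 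Since $\mathcal{X}$ is the contraction of an $ADE$ configuration on a K3 surface, $c_1^2(\mathcal{X})=K_X^2=0$, hence $K_Y^2=0$.

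Next I would identify $Y$ from $K_Y^2=0$ together with the constraints on its Kodaira dimension and the ball-quotient equality case. Because $Y\to\mathcal{X}$ is étale in codimension $1$ and $\mathcal{X}$ has only $ADE$ (i.e. canonical, hence crepant-resolvable) singularities, $Y$ has at worst $ADE$ singularities too, its minimal resolution $\widetilde Y$ has $K_{\widetilde Y}^2=K_Y^2=0$, and the orbifold $c_2$ of $Y$ equals $n\,c_2(\mathcal{X})$. Now $\mathcal{X}$, being a contraction of a K3 surface, has Kodaira dimension $0$, and Kodaira dimension is a birational-cover invariant in this setting, so $Y$ (equivalently $\widetilde Y$) also has Kodaira dimension $0$. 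The minimal surfaces of Kodaira dimension $0$ are abelian, bielliptic, K3 and Enriques. I would then rule out bielliptic and Enriques: the covering $Y\to\mathcal{X}$ comes, by the construction in the preceding lemmas, from a cyclic or bi-double cover attached to an even or $3$-divisible set on the K3 surface $X$, so $Y$ is simply connected away from the singular locus in the K3 case and, more robustly, $\widetilde Y$ has trivial or torsion canonical bundle while $b_1(\widetilde Y)\in\{0,4\}$; the étale-in-codimension-$1$ quotient of a K3 by a group acting on the relevant divisible set cannot produce an Enriques quotient here because the group acts symplectically on the associated surface (this is exactly the content of Lemmas \ref{lem:An-automorphism-lift} and \ref{lem:lift order 3 to bidoubles}, where the lifted automorphisms are symplectic), so $\chi(\mathcal{O})$ is preserved and equals $2$ or $2/n$; combining $\chi(\mathcal{O}_{\widetilde Y})>0$ forces $\widetilde Y$ to be K3, while $\chi(\mathcal{O}_{\widetilde Y})=0$ forces it to be abelian (bielliptic is excluded since a bielliptic surface has no $ADE$ configuration whose contraction is dominated by a K3, and more directly its canonical bundle is torsion of order $>1$, incompatible with the symplectic lift). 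Hence $Y$ is abelian or K3.

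The main obstacle I expect is the bookkeeping that excludes Enriques and bielliptic surfaces cleanly: one must track not just $K_Y^2=0$ but also the parity/torsion of $K_Y$ and the first Betti number through the explicit cover, and make precise that the lift $\tau$ (resp.\ $\sigma'$) acts symplectically so that $H^{2,0}$ descends correctly. Concretely, I would invoke that in each of the covers built above ($p$-cyclic for $p=2,3$, or $(\mathbb{Z}/2)^2$ bi-double) the deck transformation group acts trivially on the holomorphic $2$-form of $Y$ — which is precisely why $\mathcal{X}$ can be a \emph{log}-K3 orbifold — so $H^0(Y,K_Y)\neq 0$ whenever $H^0(X,K_X)\neq0$, giving $\kappa(Y)\geq 0$ with $K_Y$ effective of self-intersection $0$; together with $b_1(Y)$ either $0$ (K3) or $4$ (abelian), inherited from the Albanese of the torus in the generalized-Kummer situation or from simple connectedness of $X$ in the purely K3 situation, this pins $Y$ down. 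The rest — the two displayed Chern-number formulas — is then immediate from multiplicativity.
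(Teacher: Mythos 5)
Your computation of the Chern numbers matches the paper's (both just invoke multiplicativity of orbifold Chern numbers under a cover \'etale in codimension $1$, citing \cite{Kobayashi, RR}), but for the identification of $Y$ you take a much longer route than necessary, and the paper's one-line argument is hiding in plain sight in your own text. You observe that ``the canonical class pulls back, since the ramification divisor is empty in codimension $1$'' --- but then you only extract numerical information ($K_Y^2=0$) and leave open that $K_{\widetilde Y}$ might be \emph{torsion}, which forces you into the Enriques/bielliptic exclusion via symplectic lifts, $b_1$, and $\chi(\OO)$ bookkeeping. The point is that $K_{\mathcal{X}}$ is not merely numerically trivial but \emph{trivial}: the contraction $X\to\mathcal{X}$ of an $ADE$ configuration on a K3 is crepant and the singularities are Gorenstein, so $K_{\mathcal{X}}=\OO_{\mathcal{X}}$, and since $Y\to\mathcal{X}$ is unramified in codimension $1$ one gets $K_Y=\pi^{*}K_{\mathcal{X}}=\OO_Y$ on the nose. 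A minimal surface with trivial (not just torsion) canonical bundle is K3 or abelian, so Enriques and bielliptic are excluded with no further work. This is exactly the paper's proof. Two further remarks on your version: (i) in the lemma's hypotheses $Y$ is assumed \emph{smooth}, so the detour through ``$Y$ has at worst $ADE$ singularities'' and its minimal resolution $\widetilde Y$ is unnecessary; (ii) some of your exclusion steps are fragile as stated --- ``$\chi(\OO)$ is preserved and equals $2$ or $2/n$'' is not a correct statement for a ramified-in-codimension-$2$ quotient, and ``a bielliptic surface has no $ADE$ configuration whose contraction is dominated by a K3'' is an assertion, not an argument. None of this is fatal since your fallback (deck group acts trivially on the $2$-form, hence $H^{0}(Y,K_Y)\neq0$ and $K_Y$ effective with $K_Y^2=0$ and $K_Y$ trivial) does close the argument, but the direct pull-back of the trivial canonical sheaf makes all of it redundant.
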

\begin{proof}
The first part follows by the definition of the orbifold Chern numbers,
see e.g. \cite{Kobayashi, RR}. Since $Y\to\mathcal{X}$ is ramified
in codimension 2, the canonical divisor $K_{Y}$ is the pull-back
of $K_{\mathcal{X}}$, which is trivial, thus $K_{Y}$ is trivial
and $Y$ is a K3 or is an Abelian surface.
\end{proof}
The double cover of an even set of $8$ (resp. $16$) $A_{1}$ is
a $K3$ (resp. a torus). The triple cover of a $3$-divisible set
of $6$ (resp. $9$) $A_{2}$ is a $K3$ (resp. a torus).

\section{Classification of symplectic groups and generalized Kummer surfaces\label{sec:Classification-of-symplectic}}

\subsection{Fujiki's constructions of Abelian tori with symplectic action of
a group\label{subsec:Fujiki's-constructions}}

In \cite{Fujiki}, Fujiki constructs and classifies pairs ($T,G)$
of complex tori $T$ with a faithful action by a group $G$ containing
no translations. Let us describe his results when $G$ acts symplectically
and is not cyclic. 

Let $\mathbb{H}=\RR[1,i,j,k]$ be the quaternion field, so that 
\[
i^{2}=j^{2}=k^{2}=-1,\,ij=-ji=k.
\]
Let 
\[
\qa=\ZZ[1,i,j,t]
\]
be the ring of Hurwitz quaternions, where $t=\frac{1}{2}(1+i+j+k)$.
This is a maximal order of $F=\QQ[1,i,j,k]$ and its group of invertible
elements is
\[
\qa^{\times}=\{1,\pm i,\pm j,\pm k,\frac{1}{2}(\pm1\pm i\pm j\pm k)\},
\]
which is the binary tetrahedral group $T_{24}$. Let be 
\[
\qa_{0}=\ZZ[1,i,j,k];
\]
the subgroup $\qa_{0}^{\times}=\{1,\pm i,\pm j,\pm k\}$ is the quaternion
group $Q_{8}$. Let be 
\[
F'=\QQ[1,i,\sqrt{3}j,\sqrt{3}k],
\]
and $\frak{b}=\ZZ[1,i,h,l]$, where 
\[
h=\frac{1}{2}(i+\sqrt{3}j),\,\,l=\frac{1}{2}(1+\sqrt{3}k).
\]
The subgroup 
\[
\frak{b}^{\times}=\{\pm1,\pm i,\pm h,\pm l,\pm ih,\pm il\}
\]
is the binary dihedral group $D_{12}$ of order $12$.

Let us define the following lattices in $\HH$:
\[
\L_{Q_{8}}=\qa_{0},\,\,\L_{D_{12}}=\frak{b},\,\,\L_{T_{24}}=\qa.
\]
 The set $\mathcal{X}$ of pure quaternions: 
\[
\mathcal{X}=\{q\in\mathbb{H}\,|\,q^{2}=-1\}=\{ai+bj+ck\,|\,a^{2}+b^{2}+c^{2}=1\},
\]
is isomorphic to $\PP_{\CC}^{1}$. For $q\in\mathcal{X}$, one can
identify $\RR+q\RR$ with $\CC$ by sending $q$ to $\sqrt{-1}$.
By multiplication on the right, $\mathcal{X}$ parametrizes complex
structures of $\mathbb{H}=F\otimes\RR=\RR^{4}$. \\
For $G=Q_{8},\,D_{12}$ or $T_{24}$, such a complex structure induces
a complex structure on the real torus $T_{q}:=\HH/\L_{G}$. The left
multiplication on $\HH$ induces a left action of $G=\L_{G}^{\times}$
on $T_{q}=\HH/\L_{G}$, which is compatible with the complex structure
induced by $q$, in other words that action is holomorphic. In that
way we get a holomorphic family of pairs 
\[
(T_{q},G)_{q\in\mathcal{X}}
\]
of a complex tori $T_{q}$ with an action of the group automorphism
$G$ (preserving $0\in T_{q}$), parametrized by $q\in\mathcal{X}_{G}=\mathcal{X}\simeq\PP^{1}$. 

We say that a group acts \textit{symplectically} on a torus (or is
\textit{symplectic}) if its analytic representation is in $SL_{2}(\CC)\subset GL_{2}(\CC)$.
According to \cite{Fujiki}, the three groups $G=$$Q_{8},D_{12}$
and $T_{24}$ acts symplectically on the torus $T_{q}=\HH/\L_{G}$. 
\begin{defn*}
We say that two pairs $(T_{1},G_{1})$ and $(T_{2},G_{2})$ of tori
$T_{1},T_{2}$ with action by groups $G_{1},G_{2}$ are \textit{isomorphic}
if there is an isomorphism of $T_{1}$ with $T_{2}$ such that the
action of $G_{1}$ on $T_{2}$ (induced by transport of structure)
is $G_{2}$ (in particular $G_{1}\simeq G_{2}$). We say that a symplectic
group $G$ acting on a torus $T$ is \textit{reduced} if it contains
no translations.
\end{defn*}
Let $G$ be a symplectic group of automorphism of a torus $T$ and
let $G_{0}$ be its subgroup of translations.
\begin{lem}
The group $G_{0}$ is normal in $G$ and $G/G_{0}$ is a reduced symplectic
group of automorphisms of the torus $T/G_{0}$.
\end{lem}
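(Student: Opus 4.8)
The plan is to identify $G_{0}$ with the kernel of the analytic representation, from which normality is immediate, and then to verify the three assertions about $G/G_{0}$ acting on $T/G_{0}$ — that the action makes sense and is faithful, that it is symplectic, and that it is reduced — by tracking linear parts. Write $T=V/\Lambda$ with $V$ a $2$-dimensional complex vector space, and let $\rho\colon G\to GL_{2}(\CC)$ be the analytic representation, so that every $g\in G$ acts as $x\mapsto\rho(g)x+t_{g}$ for some $t_{g}\in T$. An automorphism of $T$ is a translation exactly when its linear part is the identity, hence $G_{0}=\{g\in G:\rho(g)=I\}$ is the kernel of the homomorphism $g\mapsto\rho(g)$ on $G$, and in particular is normal in $G$. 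As $G_{0}$ is finite (being a subgroup of the finite group $G$), it is a finite subgroup of the group of translations of $T$, i.e. a finite subgroup $G_{0}\subset T$; its preimage $\Lambda'\subset V$ under $V\to T$ is a lattice with $\Lambda\subset\Lambda'$ and $\Lambda'/\Lambda\cong G_{0}$, so $T/G_{0}=V/\Lambda'$ is again a complex $2$-torus, with the same underlying vector space $V$.

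Next I would check that $G/G_{0}$ acts on $T/G_{0}$: since $G_{0}\triangleleft G$, the formula $gG_{0}\cdot\bar x=\overline{g\cdot x}$ is well defined — replacing $x$ by $h\cdot x$ with $h\in G_{0}$ changes $g\cdot x$ by the translation $ghg^{-1}\in G_{0}$, and replacing $g$ by $gh$ merely pre-composes with a translation by an element of $G_{0}$. For faithfulness, suppose $gG_{0}$ acts trivially on $T/G_{0}$; unwinding, $g\cdot x$ and $x$ have the same image in $T/G_{0}$ for all $x$, so, after fixing a lift $w\in V$ of $t_{g}$, the continuous map $v\mapsto(\rho(g)-I)v+w$ takes values in the discrete set $\Lambda'$ for every $v$ in the connected space $V$. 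Hence it is constant, which forces $\rho(g)=I$ and then $w\in\Lambda'$, i.e. $g\in G_{0}$. Thus $G/G_{0}$ genuinely acts, faithfully, by automorphisms of $T/G_{0}$.

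Finally, the analytic representation of $gG_{0}$ acting on $T/G_{0}=V/\Lambda'$ is again $\rho(g)$, which lies in $SL_{2}(\CC)$ because $G$ is symplectic; hence $G/G_{0}$ is symplectic. And $gG_{0}$ acts as a translation of $T/G_{0}$ precisely when its linear part $\rho(g)$ is trivial, i.e. precisely when $g\in G_{0}$, so the identity is the only translation in $G/G_{0}$ and $G/G_{0}$ is reduced. I do not expect a real obstacle here; the only step requiring genuine care is the faithfulness of the $G/G_{0}$-action — which is what gives the phrase ``$G/G_{0}$ is a reduced symplectic group acting on $T/G_{0}$'' its content — and it is settled by the discreteness argument above, everything else being a formal consequence of $G_{0}=\ker\rho$.
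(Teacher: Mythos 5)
Your proof is correct and follows essentially the same route as the paper's: both rest on the observation that the analytic representation $\rho$ is unchanged on passing to $G/G_{0}$ acting on $T/G_{0}$, with normality of $G_{0}$ coming from its characterization as the translations in $G$ (your $G_{0}=\ker\rho$ is the same fact). You merely spell out more details — the lattice $\Lambda'$, well-definedness and faithfulness of the quotient action, and reducedness — which the paper leaves implicit.
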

\begin{proof}
It is easy to check that $G_{0}$ is normal (the translation subgroup
of a torus is normal). The quotient $T/G_{0}$ is of course a torus
; the group $G/G_{0}$ acts on $T/G_{0}$ symplectically since the
analytic representations of an element in $G$ or its image in $G/G_{0}$
are the same.
\end{proof}
We say that a finite reduced group $G$ is \textit{maximal} if $G$
is not a strict sub-group of another reduced finite symplectic group.
Let $G$ be a non-cyclic group of symplectic automorphisms of a torus
$T$, fixing one point globally (which we can suppose to be the origin;
that hypothesis implies that $G$ is reduced). We have
\begin{thm}
(Fujiki \cite[Proposition 3.5 and Theorem 3.11]{Fujiki}) The group
$G$ is isomorphic to one of the groups $Q_{8},\,D_{12}$ or $T_{24}$.\\
 If $G$ is maximal, then there exists $q\in\mathcal{X}$ such that
$(T,G)$ is isomorphic to $(T_{q},G)$, where $T_{q}=\HH/\L_{G}$
with complex structure given by $q$. \\
If $G$ is not maximal, then $G=Q_{8}$ and there exists $q\in\mathcal{X}$
such that $(T,G)$ is isomorphic to $(T_{q},Q_{8})$, where $T_{q}=\HH/\L_{T_{24}}$
and $Q_{8}\subset T_{24}$ is the unique quaternion group of order
$8$ contained in $T_{24}$. 
\end{thm}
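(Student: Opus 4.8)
The plan is to reconstruct the pair $(T,G)$ from the two linear representations that a symplectic action on a $2$-torus produces: the \emph{analytic} representation $\rho\colon G\hookrightarrow SL_{2}(\CC)$ on the tangent space at the origin, and the \emph{rational} representation of $G$ on $\Lambda\otimes\QQ$, where $T=(\Lambda\otimes\RR)/\Lambda$; these are linked by $(\Lambda\otimes\QQ)\otimes_{\QQ}\CC\cong\rho\oplus\bar{\rho}$ as $\CC[G]$-modules. I would treat separately the abstract identification of $G$, the identification of the $\QQ[G]$-module $\Lambda\otimes\QQ$ together with the admissible complex structures, and finally the classification of the lattices.

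For the first point: a finite abelian subgroup of $SL_{2}(\CC)$ is simultaneously diagonalisable, hence cyclic, so the hypothesis that $G$ is non-cyclic forces $G$ to be non-abelian and $\rho$ to be irreducible of dimension $2$. By the classical ADE classification $G$ is binary dihedral of order $4n$ with $n\ge 2$, or one of $T_{24}=2T$, $O_{48}=2O$, $I_{120}=2I$. Conjugating $\rho$ into $SU(2)$ shows $\rho$ is of quaternionic type, so $\bar{\rho}\cong\rho$; hence $\Lambda\otimes\QQ$ complexifies to $\rho^{\oplus 2}$, and its character $2\chi_{\rho}$, so also $\chi_{\rho}$, is $\QQ$-valued. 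An element of order $m$ then contributes to the character the value $2\cos(2\pi a/m)\in\QQ$ with $\gcd(a,m)=1$, and Niven's theorem forces $m\in\{1,2,3,4,6\}$. This rules out $O_{48}$ (an element of order $8$), $I_{120}$ (an element of order $10$) and all binary dihedral groups of order $\ge 16$, leaving $G\in\{Q_{8},\,D_{12},\,T_{24}\}$; for each of these $\chi_{\rho}$ is indeed rational and the symplectic irreducible $2$-dimensional representation is unique (for $T_{24}$ the two twists of $\rho$ by the order-$3$ characters have determinant $\ne 1$, so are not symplectic).

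For the second point: for each of the three groups the quaternionic representation $\rho$ has Schur index $2$ over $\QQ$, so the corresponding simple factor of $\QQ[G]$ is a rational quaternion division algebra $B_{G}$ ramified at $\infty$, and $\Lambda\otimes\QQ$, being the unique faithful $4$-dimensional $\QQ[G]$-module of character $2\chi_{\rho}$, is $B_{G}$ with $G$ acting by left multiplication. A Hilbert-symbol computation identifies $B_{G}=\left(\frac{-1,-1}{\QQ}\right)$ (ramified at $\{2,\infty\}$) for $G=Q_{8},T_{24}$ and $B_{G}=\left(\frac{-1,-3}{\QQ}\right)$ (ramified at $\{3,\infty\}$) for $G=D_{12}$, which matches Fujiki's rings $F$ and $F'$. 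The commutant of left multiplication on $B_{G}\otimes\RR\cong\HH$ is the algebra of right multiplications, so a $G$-invariant complex structure is exactly a right multiplication $R_{q}$ with $q^{2}=-1$, i.e.\ $q\in\mathcal{X}\cong\PP^{1}$: this produces Fujiki's $\PP^{1}$-family, and right multiplication by a scalar $(q')^{-1}$ gives the isomorphisms of pairs $(\HH/\Lambda q',R_{q})\cong(\HH/\Lambda,R_{q'q(q')^{-1}})$ that I will use to normalise $\Lambda$.

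The remaining work is to classify the $G$-stable lattices $\Lambda\subset B_{G}$, which are fractional left ideals for their left order $\OO\supseteq\ZZ[G]$. For $G=T_{24}$ one has $\ZZ[T_{24}]=\qa$ (it contains $1,i,j$ and $t=\tfrac12(1+i+j+k)$), the maximal Hurwitz order, of class number $1$; for $G=D_{12}$, $\ZZ[D_{12}]$ is a maximal order of $\left(\frac{-1,-3}{\QQ}\right)$, again of class number $1$; in both cases every $\Lambda$ is $\OO q'$, the normalisation above gives $(T,G)\cong(T_{q'},G)$ with $T_{q'}=\HH/\L_{G}$, and $G$ is automatically maximal, since by the first point any larger reduced non-cyclic symplectic group would belong to $\{Q_{8},D_{12},T_{24}\}$, while a subgroup comparison (using $T_{24}^{\mathrm{ab}}\cong\ZZ/3$, so $T_{24}$ has no subgroup of order $12$) shows that neither $D_{12}$ nor $T_{24}$ is a proper subgroup of any of these. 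For $G=Q_{8}$ one has $\ZZ[Q_{8}]=\qa_{0}$, an order of index $2$ in $\qa$ whose only overorders are $\qa_{0}$ and $\qa$: if the left order of $\Lambda$ is $\qa$, the $Q_{8}$-action extends to $\qa^{\times}=T_{24}$ and the $T_{24}$-analysis gives $(T,Q_{8})\cong(T_{q},Q_{8})$ with $T_{q}=\HH/\L_{T_{24}}$ (the non-maximal case); if the left order is $\qa_{0}$, then $Q_{8}$ cannot be enlarged — an enveloping reduced symplectic group would be $T_{24}$ by the group theory of the first point, and the $T_{24}$-analysis would then force $\Lambda$ to be a $\qa$-lattice, contradicting the left order $\qa_{0}$ — so $Q_{8}$ is maximal, and an ideal-class computation for $\qa_{0}$ together with the normalisation gives $(T,Q_{8})\cong(T_{q},Q_{8})$ with $T_{q}=\HH/\L_{Q_{8}}$. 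Finally, non-maximality forces $G=Q_{8}$ because, by the first point, the enveloping group lies in $\{Q_{8},D_{12},T_{24}\}$ and $Q_{8}\subsetneq T_{24}$ is the only strict inclusion among non-cyclic groups there. \emph{The step I expect to be the main obstacle is this last one}: it requires controlling the ideal theory of the non-maximal group rings $\ZZ[G]$ inside the algebras $B_{G}$ — their ideal-class sets, and the comparison between $\qa_{0}$-lattices and $\qa$-lattices — which is exactly where the delicate maximal-versus-non-maximal behaviour of $Q_{8}$ is located; the group list of the first point and the identification of $B_{G}$ and of the $\PP^{1}$-family are by comparison routine.
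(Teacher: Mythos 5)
This statement is quoted in the paper as Fujiki's Proposition 3.5 and Theorem 3.11 and is given no proof there, so the only meaningful comparison is with Fujiki's original argument; your reconstruction follows what is essentially that route (rationality of the character of the lattice representation to pin down the linear part, then classification of the $G$-stable lattices as one-sided ideals over orders in the quaternion algebra spanned by $G$), and in outline it is correct. Two points should be made explicit if this were written out in full. First, ``maximal'' in the paper means not properly contained in \emph{any} reduced finite symplectic group, and an enveloping group $G'$ need not fix a point globally; your group-theoretic comparison inside $\{Q_{8},D_{12},T_{24}\}$ only covers the fixed-point case, so you must also invoke the fixed-point-free classification (Theorem \ref{thm:(Fujiki-)-Translated}) and check that neither $\hat{Q}_{8}$ nor $\hat{T}_{24}$ contains the \emph{linear} $Q_{8}$ --- which holds because every order-$4$ element of $\hat{T}_{24}$ other than $\pm i$ has a nontrivial translation part. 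Second, the arithmetic inputs you flag as the main obstacle are genuinely where the content lies, but they are all standard: $\qa$ and $\frak{b}$ are maximal orders of one-sided class number one in the quaternion algebras ramified at $\{2,\infty\}$ and $\{3,\infty\}$ respectively; $\qa$ is the unique maximal order containing the Lipschitz order $\qa_{0}$ (at the prime $2$ the completed algebra is division, hence has a unique maximal order, and at odd primes $\qa_{0}$ is already maximal), so the only overorders of $\qa_{0}$ are $\qa_{0}$ and $\qa$; and every left $\qa_{0}$-lattice with left order exactly $\qa_{0}$ is principal (locally at $2$, sandwiching $L$ between $\pP M$ and $M$ for $M=\OO_{2}L$ principal leaves only $\qa_{0,2}x$ and $\OO_{2}x$, and the global class number of $\qa_{0}$ is one). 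With these facts supplied --- all of which are in the standard literature on quaternion orders --- your argument is complete and matches the cited result.
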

For $q\in\mathcal{X}$ and $T_{q}=\HH/\L_{T_{24}}$, let us now denote
by $\mathbb{A}(T_{q})$ and $\mathbb{A}_{0}(T_{q})$ respectively
the group of real affine automorphisms and the group of translations
of $T_{q}$. Then $\mathbb{A}(T_{q})$ is naturally a semi-direct
product $\mathbb{A}(T_{q})=\aut_{\ZZ}\L_{T_{24}}\ltimes\mathbb{A}_{0}(T_{q})$.
Let $\l\in\L_{T_{24}}^{\times}$ (acting by left multiplication) and
$r\in T_{q}$, then the action $(\l;r)\,x\to\l x+r$ is biholomorphic
on $T_{q}$ so that we have the natural embedding $\L_{T_{24}}^{\times}\ltimes\mathbb{A}_{0}(T_{q})\subset\mathbb{A}(T_{q})$.
Let us define the sub-groups $\hat{Q}_{8}$ and $\hat{T}_{24}$ of
$\mathbb{A}(T_{q})$ as follows:
\[
\hat{Q}_{8}=\{1,\pm i,\pm j',\pm k'\}
\]
for $j'=(j;\a)$, $k'=(k;\a)$ where $\a=\frac{1}{2}(1+i)$ and 
\[
\hat{T}_{24}=\la\hat{Q}_{8},\,(t;\frac{1}{2}s)\ra,\,\,\,\text{for }s=\frac{1}{2}(1+i-j+k);
\]
(we recall that $t=\frac{1}{2}(1+i+j+k)$) thus by definition $\hat{Q}_{8}\subset\hat{T}_{24}$.
For $q\in\mathcal{X}$, the group $\hat{T}_{24}$ acts symplectically
on the torus $T_{q}=\HH/\L_{\hat{T}_{24}}$. That action is without
global fix points and so is the action of the sub-group $\hat{Q}_{8}\subset\hat{T}_{24}$.
One has $\hat{Q}_{8}\simeq Q_{8}$ and $\hat{T}_{24}\simeq T_{24}$
as abstract groups.
\begin{thm}
\label{thm:(Fujiki-)-Translated}(Fujiki \cite[Theorem 3.17]{Fujiki})
Let $G$ be a reduced finite group acting symplectically on a torus
$T$, such that there is no global fixed point.\\
The group $G$ is isomorphic to $Q_{8}$ or $T_{24}$. If $G\simeq Q_{8}$
(resp. $T_{24}$), then there exists $q\in\mathcal{X}$ such that
$(T,G)$ is isomorphic to $(T_{q},\hat{Q}_{8})$ (resp. $(T_{q},\hat{T}_{24})$),
where in both cases $T_{q}=\HH/\L_{24}$.
\end{thm}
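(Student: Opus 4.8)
The plan is to separate each automorphism into its linear part and its translation part, to pin down the linear parts via the representation theory of finite subgroups of $SL_{2}(\CC)$, and then to detect the absence of a global fixed point through a low-degree group-cohomology computation. Every automorphism of a complex torus $T=\CC^{2}/\L$ is affine, $x\mapsto Ax+b$ with $A\in GL_{2}(\CC)$ preserving $\L$ and $b\in T$, and symplecticity means $A\in SL_{2}(\CC)$. Sending $g\in G$ to its linear part $A_{g}$ defines a homomorphism $\rho\colon G\to SL_{2}(\CC)$ whose kernel is the translation subgroup of $G$, hence is trivial because $G$ is reduced; thus $\bar G:=\rho(G)\cong G$ is a finite subgroup of $SL_{2}(\CC)$ preserving $\L$ and commuting with the complex structure, so that $\L\otimes\QQ$ is a faithful $4$-dimensional $\QQ[\bar G]$-module complexifying to $V\oplus\bar V$, with $V$ the analytic $2$-dimensional representation. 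Finite subgroups of $SL_{2}(\CC)$ are cyclic or binary dihedral, tetrahedral, octahedral or icosahedral. Since the trace of $g$ on $\L$ is an integer, $2\,\mathrm{Re}\,\chi_{V}(g)\in\ZZ$ for all $g$; this rules out the binary octahedral group (faithful character value $\pm\sqrt{2}$), the binary icosahedral group (character values in $\QQ(\sqrt{5})$), and all binary dihedral groups of order $\neq 8,12$. A cyclic reduced symplectic group is excluded too: a finite-order $A\in SL_{2}(\CC)$ with eigenvalue $1$ is unipotent, hence $=I$, so $A-I$ is invertible and $(A-I)p=-b$ has a solution, giving a global fixed point. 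Hence $G\cong\bar G$ is one of $Q_{8}$, $D_{12}$, $T_{24}$, and $\L$ is, up to isomorphism, one of the finitely many $\ZZ[\bar G]$-lattices in the relevant rational representation --- in particular $\L_{D_{12}}$ for $D_{12}$, $\L_{24}$ for $T_{24}$, and either $\L_{Q_{8}}$ or $\L_{24}$ (the latter with $Q_{8}\subset T_{24}$) for $Q_{8}$.

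Next, fix $\L$ and the linear action $\bar G\subset GL(\L)$. From $1\to T\to\mathbb A(T)\to GL(\L)\to1$, the $G$-actions on $T$ with this linear part are classified by $H^{1}(G,T)$, the class attached to an action being that of the cocycle $g\mapsto c_{g}$, where $c_{g}$ is the translation part of $g$; a point $p$ is globally fixed exactly when $(A_{g}-I)p=-c_{g}$ for all $g$, i.e. when $-c$ is the coboundary of $p$, so $G$ has a global fixed point if and only if the class of $c$ vanishes. Since $T=(\L\otimes\RR)/\L$ and $H^{i}(G,\L\otimes\RR)=0$ for $i>0$, the long exact sequence yields a canonical isomorphism $H^{1}(G,T)\cong H^{2}(G,\L)$. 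The theorem then reduces to three facts: (a) $H^{2}(D_{12},\L_{D_{12}})=0$ and $H^{2}(Q_{8},\L_{Q_{8}})=0$, so neither a $D_{12}$-action nor a $Q_{8}$-action on $\HH/\L_{Q_{8}}$ is fixed-point-free --- this eliminates $D_{12}$ altogether and forces every fixed-point-free $Q_{8}$ onto $\HH/\L_{24}$; (b) $H^{2}(Q_{8},\L_{24})$ and $H^{2}(T_{24},\L_{24})$ are nonzero; and (c) up to the automorphism group of the pair $(\L_{24},\bar G)$ and the choice of complex structure $q\in\mathcal X$, the nonzero classes are precisely those realized by the affine groups $\hat Q_{8}$ and $\hat T_{24}$. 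For the existence half of (c) one checks directly that the displayed formulas $j'=(j;\a)$, $k'=(k;\a)$, $(t;\frac{1}{2}s)$ define homomorphisms into $\mathbb A(T_{q})$, i.e. satisfy the cocycle relation; the transitivity half is what delivers uniqueness of $(T,G)$ up to isomorphism.

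The main obstacle is exactly (a)--(c). Both $Q_{8}$ and $T_{24}$ have periodic cohomology of period $4$, which makes $H^{2}(G,-)$ accessible, but one needs the precise $\ZZ[G]$-module structure of $\L_{24}$ and, crucially, the fact that $H^{2}(Q_{8},-)$ vanishes on $\L_{Q_{8}}$ but not on $\L_{24}$ --- so these two lattices, though isogenous as $\QQ[Q_{8}]$-modules, are non-isomorphic over $\ZZ[Q_{8}]$, and it is genuinely $\L_{24}$ (with $Q_{8}\subset T_{24}$) that underlies a fixed-point-free $Q_{8}$-action. Harder still is the uniqueness assertion (c): a priori $H^{2}(G,\L_{24})$ might carry several nonzero classes giving pairwise non-isomorphic pairs $(T,G)$, so one must show that the relevant symmetry group acts transitively on them --- or enumerate the classes and verify that each yields $(T_{q},\hat G)$. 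It is this transitivity/classification step, not the construction of the two examples, that carries the real content of Fujiki's theorem.
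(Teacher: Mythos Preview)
The paper does not prove this theorem at all: it is quoted as Fujiki's result \cite[Theorem 3.17]{Fujiki} and stated without argument (the only thing the paper adds is the subsequent Remark that any $Q_{8}$-action on $\HH/\L_{Q_{8}}$ has a global fixed point). So there is no proof in the paper to compare your proposal against.

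That said, your outline is a faithful sketch of the strategy Fujiki actually uses: pass to the linear part to get an injection $G\hookrightarrow SL_{2}(\CC)$, use the Klein classification together with the integrality constraint on $\chi_{V}(g)+\overline{\chi_{V}(g)}$ to cut the list down to cyclic, $Q_{8}$, $D_{12}$, $T_{24}$, eliminate the cyclic case because $A-I$ is invertible, classify the admissible $\ZZ[G]$-lattices $\L$, and finally read off the existence or not of a global fixed point from $H^{1}(G,T)\cong H^{2}(G,\L)$. You are also right that the substance lies in the three cohomological facts you label (a)--(c), and especially in the transitivity half of (c). What you have written is therefore an accurate roadmap, not a proof: the lattice classification (why only $\L_{Q_{8}}$ or $\L_{24}$ can occur for $Q_{8}$, and only $\L_{24}$ for $T_{24}$, up to isogeny and scaling) and the explicit computation of $H^{2}(G,\L)$ together with the action of the normalizer on it are precisely the work Fujiki carries out and that your proposal leaves open.
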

\begin{rem}
A) Any action of $Q_{8}$ on the torus $\HH/\L_{Q_{8}}$ has a global
fix point. \\
B) By \cite[Proposition 5.7 p 62]{Fujiki}, the complex torus $T_{q}$
is algebraic if and only if $\exists\mu\in\RR,\,\mu q\in\text{\ensuremath{\L}}_{G}$.
There are an infinite number of such $q\in X$. Moreover if $T_{q}$
is algebraic, it has maximal Picard number. 
\end{rem}
On the following table are summarized the $10$ $ADE$ configurations
on generalized Kummer surfaces:\vspace{2mm}

\begin{tabular}{|c|c|c|c|}
\hline 
Configuration & Groups & References for $K_{G}$ & $\rho$\tabularnewline
\hline 
$16A_{1}$ & $\ZZ/2\ZZ$ & \cite{Nikulin, Morrison} & $16$\tabularnewline
\hline 
$9A_{2}$ & $\ZZ/3\ZZ$ & \cite{Bertin} & $18$\tabularnewline
\hline 
$6A_{1}+4A_{3}$ & $\ZZ/4\ZZ$ & \cite{Bertin} & $18$\tabularnewline
\hline 
$5A_{1}+4A_{2}+A_{5}$ & $\ZZ/6\ZZ$ & \cite{Bertin} & $18$\tabularnewline
\hline 
$2A_{1}+3A_{3}+2D_{4}$ & $Q_{8}$  & \cite[\textsection 4.2.2]{Garbagnati}, \cite[Prop. 2.1]{Wendland} & $19$\tabularnewline
\hline 
$3A_{1}+4D_{4}$ & $Q_{8}\subset T_{24}$  & \cite[\textsection 4.2.3]{Garbagnati} & $19$\tabularnewline
\hline 
$A_{1}+6A_{3}$ & $\hat{Q}_{8}$  &  & $19$\tabularnewline
\hline 
${\color{black}A_{1}+2A_{2}+3A_{3}+D_{5}}$ & $Q_{12}$ & \cite[\textsection 4.2.5]{Garbagnati} & $19$\tabularnewline
\hline 
${\color{black}A_{1}+4A_{2}+D_{4}+E_{6}}$ & $T_{24}$ & \cite[\textsection 4.2.4]{Garbagnati}, \cite[Prop. 2.1]{Wendland} & $19$\tabularnewline
\hline 
$4A_{2}+2A_{3}+A_{5}$ & $\hat{T}_{24}$  &  & $19$\tabularnewline
\hline 
\end{tabular}

The column $\rho$ gives the contribution of the given configuration
of $(-2)$-curves to the Picard number of the K3 surface. \\
About generalized Kummer surfaces, one must cite the work of Enriques
and Severi \cite{Enriques} more than one century ago, who were the
first to study generalized Kummer surfaces obtained as quotients of
Jacobians of curves. They saw the 10 cases of the above table. They
also described the resulting singularities (with errors for some non-cyclic
groups). \\
In \cite{Cinkir} \c Cinkir and \"Onsiper study generalized Kummer
surfaces and describe the quotient singularities (but some cases are
missing). In \cite{Onsiper} \"Onsiper and Sert\"oz give a generalization
of Shioda-Inose structures to these generalized Kummer surfaces. In
\cite{Bertin}, Bertin describes the primitive sub-lattices containing
the configurations for cyclic groups $\ZZ_{n}$, $n\in\{3,4,6\}$,
after the work of Nikulin \cite{Nikulin} (and Morrison \cite{Morrison})
for $n=2$. In \cite{Wendland}, Wendland studies that problem for
some non-cyclic groups preserving globally a point, a work which was
later corrected and completed by Garbagnati in \cite{Garbagnati}.\\
 To be more exhaustive, one must also mention that Fujiki studied
the possible $ADE$ singularities in \cite{Fujiki}, and Katsura \cite{Katsura}
worked out the possible symplectic groups in characteristic $>0$,
illustrating each cases by examples. 

\subsection{The configuration $\hat{Q}_{8}:\,\,A_{1}+6A_{3}$\label{subsec:Q8}}

Let $X$ be the K3 surface obtained as the desingularization of the
quotient $T_{q}/\hat{Q}_{8}$ of a torus $T_{q}=\HH/\L_{T_{24}}$
by the action of $\hat{Q}_{8}\subset\hat{T}_{24}$ described in Section
\ref{subsec:Fujiki's-constructions}.
\begin{lem}
The singularities of the quotient surface $T_{q}/\hat{Q}_{8}$ are
$A_{1}+6A_{3}.$ 
\end{lem}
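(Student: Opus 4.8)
The plan is to work out explicitly the fixed-point data of the action of $\hat{Q}_{8}$ on $T_{q}=\HH/\L_{T_{24}}$ and to identify, for each singular point of $T_{q}/\hat{Q}_{8}$, the analytic type of the quotient singularity. Recall $\hat{Q}_{8}=\{1,\pm i,\pm j',\pm k'\}$ with $j'=(j;\a)$, $k'=(k;\a)$, $\a=\frac12(1+i)$, acting by affine maps $x\mapsto \l x+r$ on $T_q$. First I would note that the central involution $-1\in\hat{Q}_{8}$ acts as $[-1]$ (multiplication by $-1$) on $T_q$, since it has no translation part; its fixed locus on the torus is the $16$ two-torsion points $\frac12\L_{T_{24}}/\L_{T_{24}}$, exactly as in the classical Kummer case. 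The four non-central order-$4$ cosets $\{\pm i\}$, $\{\pm j'\}$, $\{\pm k'\}$ partition into three conjugacy classes, each generating a cyclic subgroup $\ZZ/4\ZZ$; the elements of order $4$ act with analytic part $i,j,k\in SL_2(\CC)$ (so eigenvalues $\pm\sqrt{-1}$), hence each has isolated fixed points on $T_q$.

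The key computation is to count, for each of the three $\ZZ/4\ZZ$-subgroups, the number of fixed points of a generator on $T_q$ and then to pass to orbits under the full $\hat{Q}_{8}$-action. For an affine map $g:x\mapsto \l x+r$ with $\l-1$ invertible over $\QQ$ (true when $\l$ has no eigenvalue $1$, i.e. for the order-$4$ elements), the number of fixed points on $\HH/\L_{T_{24}}$ is $|\L_{T_{24}}/(\l-1)\L_{T_{24}}| = |\det_{\ZZ}(\l-1)|$ acting on the rank-$4$ lattice, which for an order-$4$ quaternion unit works out to $4$. So each order-$4$ element fixes $4$ points; a generator $g$ and $g^{-1}=g^3$ share the same fixed set, and $g^2=-1$ fixes all $16$ two-torsion points, so the four fixed points of $g$ are among the $16$ two-torsion points. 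At such a point the stabiliser in $\hat{Q}_{8}$ is the cyclic group $\ZZ/4\ZZ=\la g\ra$ acting linearly with eigenvalues $\sqrt{-1},-\sqrt{-1}$, hence an $A_3$ singularity on the quotient. At a two-torsion point fixed by $-1$ but by no order-$4$ element, the stabiliser is $\{\pm1\}$, giving an $A_1$ singularity. I then need to see how $\hat{Q}_{8}/\la g\ra$ permutes the fixed points: since $|\hat{Q}_{8}|=8$, each order-$4$ point has orbit of size $8/4=2$, so the $4$ fixed points of each of the three subgroups fall into $2$ orbits, contributing $2$ $A_3$-points per class, i.e. $6$ $A_3$-points total; these account for $3\times 4=12$ two-torsion points, leaving $16-12=4$ two-torsion points with stabiliser $\{\pm1\}$, which form a single $\hat{Q}_8$-orbit of size $8/2=4$ — wait, that is the wrong count, so the main obstacle is exactly this orbit bookkeeping.

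The delicate point, and where I expect to have to be careful, is the precise incidence between the $16$ two-torsion points and the three families of order-$4$ fixed points: I must verify that the three sets of $4$ fixed points are pairwise disjoint inside the $16$ two-torsion points (so they use up $12$ of them, with genuinely distinct $A_3$'s) and then re-examine the orbit sizes under $\hat{Q}_{8}$ so that the final tally is $6A_3 + A_1$ rather than $6A_3 + kA_1$ for some other $k$; the single $A_1$ must come out as one orbit, which forces the $4$ remaining two-torsion points to be acted on with a stabiliser larger than $\{\pm1\}$ at... no — rather it forces those $4$ points to form one orbit only if something fixes them up to the translation parts $\a$; so concretely I would compute, using the explicit translation vector $\a=\frac12(1+i)$, which two-torsion classes are actually fixed by $j'$ and $k'$ (the affine parts shift the linear fixed loci), redo the disjointness and orbit-size counts, and conclude. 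A clean alternative is to invoke the orbifold Euler-characteristic / Chern-number constraint: $\chi(X)=24$ forces $\sum m(\mathcal C)$ together with $\chi(T_q/\hat Q_8)$ to be consistent, and since the only possible quotient singularities of a symplectic $\ZZ/4\ZZ$ or $\ZZ/2\ZZ$ action are $A_3$ and $A_1$, the combinatorial count is pinned down; I would present the direct fixed-point computation as the main argument and remark that it is cross-checked by $m(A_1+6A_3)=2+6\cdot4-\tfrac12-6\cdot\tfrac14=24$.
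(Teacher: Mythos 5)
Your strategy is the same as the paper's: reduce everything to the $16$ two-torsion points (since every order-$4$ element squares to $[-1]$), count the fixed points of the order-$4$ elements, and do the orbit bookkeeping under $\hat{Q}_{8}$. The one substantive difference is that you count the fixed points abstractly, via $|\det_{\ZZ}(\l-1)|=N(\l-1)^{2}=4$ and the fact that $x\mapsto(\l-1)x$ is a degree-$4$ isogeny of $T_{q}$ (so the affine equations for $j',k'$ also have exactly $4$ solutions each), whereas the paper simply lists $\fix i)$, $\fix j')$, $\fix k')$ explicitly in coordinates $abcd=\frac{a}{2}+\frac{b}{2}i+\frac{c}{2}j+\frac{d}{2}t$. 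Your route is cleaner and coordinate-free, but it leaves open exactly the point you flag and do not resolve: that the three sets of $4$ fixed points are pairwise disjoint among the $16$ two-torsion points. This is not optional decoration — without it the stabilizer of a fixed point of $i$ could a priori be all of $\hat{Q}_{8}$, producing a $D_{4}$ singularity instead of two separate $A_{3}$'s, and the tally would collapse. The gap is easy to close without coordinates: if some $p$ were fixed by, say, both $i$ and $j'$, it would be fixed by $ij'=k'$ and hence by all of $\hat{Q}_{8}$, contradicting Fujiki's Theorem 3.17 that the action of $\hat{Q}_{8}$ has no global fixed point. (The paper closes it instead by the explicit enumeration, which shows the three quadruples exhaust $12$ of the $16$ points.)

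Your second worry, about the remaining $4$ points, is unfounded: once disjointness is known, those $4$ points have stabilizer exactly $\{\pm1\}$, so each lies in an orbit of size $8/2=4$; since there are only $4$ such points they form a single orbit, giving exactly one $A_{1}$. There is nothing further to "force" — the count $6A_{3}+A_{1}$ follows. The Euler-characteristic cross-check $m(A_{1}+6A_{3})=24$ is a sensible sanity test but, as you note, cannot replace the fixed-point analysis, since it does not by itself exclude other distributions of stabilizers.
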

\begin{proof}
Since the square of any order $4$ elements in $Q_{8}$ is the multiplication
by $-1$ map $[-1]_{T}$, the fixed point sets of these elements are
included into the fixed point set of $[-1]_{T}$ i.e. the set of $2$-torsion
points of $T_{q}$. For $a,b,c,d\in\{0,1\}$, let us denote by $abcd$
the $2$-torsion point $\frac{a}{2}+\frac{b}{2}i+\frac{c}{2}j+\frac{d}{2}t\in T_{q}$.
One has $i(0011)=0101$, $i(1001)=1111$, $i(0001)=1011$, $i(0111)=1101$,
$j'(0000)=1100$, $j'(1010)=0110$ etc, and we obtain that the fixed
point set of the order $4$ elements $i,\,j'=(j;\a),\,k'=(k;\a)$
(where $\a=\frac{1}{2}(1+i)$) of $\hat{Q}_{8}$ are 
\[
\begin{array}{c}
\fix i)=\{0000,\,1100,\,1010,\,0110\}\,\\
\fix j')=\{0011,\,0101,\,1001,\,1111\}\,\\
\fix k')=\{0001,\,1011,\,0111,\,1101\}.
\end{array}
\]
Using that $k'=ij',\,j'=-ik'$ etc, we compute that on the quotient
surface there are $2A_{3}$ which are the images of $\fix i)$, $2A_{3}$
images of $\fix j')$ and $2A_{3}$ images of $\fix k')$; the image
of the $4$ remaining $2$-torsion points in $T_{q}$ (the orbit of
$1000$) is a $A_{1}$. 
\end{proof}
Let now $X$ be any $K3$ surface containing a configuration $A_{1}+6A_{3}$.
For $1\leq r\leq6$, we denote by 
\[
C_{r}^{s},\,1\leq s\leq3
\]
the resolution of the $6A_{3}$, where $C_{r}^{1}C_{r}^{2}=C_{r}^{2}C_{r}^{3}=1$
and the other intersection numbers among the curves $C_{r}^{s}$ are
$0$ or $-2$. Let $C_{0}$ be the resolution of the $A_{1}$. The
discriminant group of the lattice $F_{\hat{Q}_{8}}$ generated by
the curves $C_{r}^{s}$, $1\leq r\leq6$, $s\in\{1,2,3\}$ and $C_{0}$
is $\ZZ_{2}\times(\ZZ_{4})^{6},$ it is generated by $t_{0}=\frac{1}{2}C_{0}$
and 
\[
t_{r}=\frac{1}{4}(C_{r}^{1}+2C_{r}^{2}+3C_{r}^{3}),\,r\in\{1,\dots,6\}.
\]
Let $K_{\hat{Q}_{8}}$ be the Kummer lattice of $\hat{Q}_{8}$: the
minimal primitive sub-lattice of $\NS(X)$ containing the lattice
$F_{\hat{Q}_{8}}$.
\begin{prop}
\label{prop:The-lattice-K8}The lattice $K_{\hat{Q}_{8}}$ is generated
by $F_{\hat{Q}_{8}}$ and by the divisors
\[
\d_{1}=(1,1,1,1,2,0),\,\d_{2}=(1,3,2,0,1,3)
\]
in the base $t_{1},\dots,t_{6}$ (up to reorder the $t_{r}$ and $C_{r}^{i}$).
\\
The lattice $K_{\hat{Q}_{8}}$ has discriminant group $\ZZ_{2}\times(\ZZ_{4})^{2}$;
the index of $F_{\hat{Q}_{8}}$ in $K_{\hat{Q}_{8}}$ equals $16$.
\end{prop}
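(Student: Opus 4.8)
The proof of Proposition \ref{prop:The-lattice-K8} is a computation inside the discriminant group $A_F := F_{\hat Q_8}^\vee / F_{\hat Q_8} \cong \ZZ_2 \times (\ZZ_4)^6$, together with one constraint coming from the ambient K3 lattice. First I would recall that any element of $K_{\hat Q_8}/F_{\hat Q_8}$ is represented by an \emph{isotropic} element of $A_F$ with respect to the discriminant quadratic form $q_F$ (values in $\QQ/2\ZZ$): indeed if $\d \in \NS(X)$ has $\d = \sum a_r t_r + a_0 t_0 \pmod{F_{\hat Q_8}}$ then $\d^2 \in 2\ZZ$ forces $q_F(\bar\d) = 0$, because $\d$ lies in the even K3 lattice; likewise each pair must pair to $0$ in $\QQ/\ZZ$. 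The form on the $A_1$-block $\langle t_0\rangle$ is $q(t_0) = -\tfrac12 \in \QQ/2\ZZ$, and on each $A_3$-block $\langle t_r\rangle \cong \ZZ_4$ it is $q(t_r) = -\tfrac{3}{4} \in \QQ/2\ZZ$, with $q(n t_r) = -\tfrac{3n^2}{4}$. So a class $a_0 t_0 + \sum a_r t_r$ with $a_0 \in \{0,1\}$, $a_r \in \ZZ/4$ is isotropic iff $-\tfrac{a_0}{2} - \tfrac34\sum a_r^2 \equiv 0 \pmod 2$, i.e. (multiplying by $-4$) $2a_0 + 3\sum a_r^2 \equiv 0 \pmod 8$.

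\medskip

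\textbf{Step 1: the even set forces $a_0 = 0$ on the generators.} I would first pin down the contribution of $C_0$. By Remark \ref{rem:necessary cond even}(A) (applied with the sub-$A_1$'s inside the $A_3$'s and $C_0$) together with Lemma \ref{lem:8 nodes} on the $7$ disjoint $(-2)$-curves $C_0, C_1^2,\dots,C_6^2$: in fact $C_0$ together with the $6$ middle curves $C_r^2$ form $7$ disjoint $(-2)$-curves, and if $C_0$ were in an even set it would have to be an even set of $8$ disjoint curves among the $19$ curves, but then some $C_r^1$ would meet it once on the corresponding $A_3$ — contradiction. Hence $t_0$ is not divisible, so every nontrivial element of $K_{\hat Q_8}/F_{\hat Q_8}$ has $a_0 = 0$ except possibly in combination where an $A_3$-class already uses its middle curve; I would argue directly that $a_0 = 0$ for all the relevant generators of the overlattice. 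Thus the problem reduces to the sublattice $(\ZZ_4)^6$ and the congruence $\sum_{r=1}^{6} a_r^2 \equiv 0 \pmod 8$ with the $A_3$-pairing $(n t_r, m t_r) \mapsto -\tfrac{3nm}{4} \pmod \ZZ$, i.e. $nm \equiv 0 \pmod 4$ block-diagonally — so two isotropic vectors $\d,\d'$ must satisfy $\sum a_r a_r' \equiv 0 \pmod 4$. This is exactly the condition that $\d$, $\d'$ span a self-orthogonal isotropic subgroup.

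\medskip

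\textbf{Step 2: bound the order of the totally isotropic subgroup by Proposition \ref{prop:()-NikuLattice}.} The Picard number contributed is $\rho = 19$, so $K_{\hat Q_8}$ has rank $19$ and $K_{\hat Q_8}^\perp$ in the K3 lattice $H^2(X,\ZZ)$ (unimodular of signature $(3,19)$) has rank $3$; by Proposition \ref{prop:()-NikuLattice} the length of $K_{\hat Q_8}$ is $\le 3$. Now $|A_F| = 2\cdot 4^6 = 2^{13}$ and $|K_{\hat Q_8}/F_{\hat Q_8}|^2 = |A_F|/|A_{K_{\hat Q_8}}|$; combined with $l(K_{\hat Q_8}) \le 3$ and $l(F_{\hat Q_8}) = 7$ one forces $|K_{\hat Q_8}/F_{\hat Q_8}| = 2^4 = 16$ and the discriminant group of $K_{\hat Q_8}$ to have length exactly $3$, of the form $\ZZ_2 \times (\ZZ_4)^2$ — the $\ZZ_2$ being $\langle t_0 \rangle$ untouched, and the $(\ZZ_4)^2$ being what survives of $(\ZZ_4)^6$ after quotienting by an isotropic subgroup of order $16$. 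Concretely a totally isotropic subgroup $H \subset (\ZZ_4)^6$ of order $16$ quotients $(\ZZ_4)^6$ (order $4^6 = 2^{12}$) down to order $2^{12}/16^2 \cdot (\text{something})$ — here I would invoke the standard fact that for $H$ isotropic, $H \subseteq H^\perp$ and $|H^\perp/H| = |A_F^{(6)}|/|H|^2$, giving $|H^\perp/H| = 4^6/16^2 = 2^{12}/2^8 = 16 = 4^2$; so the quotient form lives on $(\ZZ_4)^2$, confirming the discriminant claim.

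\medskip

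\textbf{Step 3: exhibit $\d_1,\d_2$ and check they generate.} It remains to verify that the explicit vectors $\d_1 = (1,1,1,1,2,0)$ and $\d_2 = (1,3,2,0,1,3)$ in $(\ZZ_4)^6$ are isotropic, mutually orthogonal, and that the subgroup $H$ they generate has order $16$ and is \emph{maximal} isotropic — whence by Step 2, $K_{\hat Q_8} = \langle F_{\hat Q_8}, \d_1, \d_2\rangle$. The isotropy check is $\sum (\d_1)_r^2 = 1+1+1+1+4+0 = 8 \equiv 0$, $\sum(\d_2)_r^2 = 1 + 9 + 4 + 0 + 1 + 9 = 24 \equiv 0 \pmod 8$; orthogonality is $\sum (\d_1)_r(\d_2)_r = 1 + 3 + 2 + 0 + 2 + 0 = 8 \equiv 0 \pmod 4$; and one checks $\d_1,\d_2$ have order $4$ in $(\ZZ_4)^6$ and are independent (e.g. $2\d_1 = (2,2,2,2,0,0)$, $2\d_2 = (2,2,0,0,2,2)$ are distinct and nonzero, and $\d_1 \pm \d_2$ has order $4$), so $|H| = 16$. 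Maximality: one must see no further isotropic class can be adjoined while keeping length $\le 3$; this follows from Step 2 since adjoining anything would make $|K/F| > 16$, contradicting $l(K) \le 3$ — but to be safe I would also directly check $\langle \d_1, \d_2\rangle^\perp / \langle \d_1,\d_2\rangle \cong (\ZZ_4)^2$ carries an \emph{anisotropic} residual form (no nonzero isotropic vector), so no enlargement within the even lattice is possible. Finally, the divisors $\d_1,\d_2$ actually lie in $\NS(X)$ for the \emph{generic} $\Km(T_q, \hat Q_8)$: here I would cite Proposition \ref{prop: Bertin-Garbagati} and Remark \ref{rem:SimpleBase}, exactly as in Garbagnati's treatment of the other configurations, to realize the overlattice geometrically; this step — matching the lattice-theoretic candidate with an actual Néron–Severi group of the Kummer surface — is the one requiring care, but it is completely parallel to the cyclic and $Q_8$, $D_{12}$, $T_{24}$ cases already in the literature.

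\medskip

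\textbf{Main obstacle.} The genuine work is Step 3's verification that $\langle \d_1, \d_2\rangle$ is a \emph{maximal} totally isotropic subgroup for which the pair of explicit vectors was chosen with the right coordinates — equivalently, writing down the residual discriminant form on $(\ZZ_4)^2$ and confirming it is anisotropic — since a careless choice of $\d_1,\d_2$ would either fail isotropy/orthogonality or leave room for a larger overlattice incompatible with $l(K_{\hat Q_8}) \le 3$. Everything else is bookkeeping with the discriminant forms of $A_1$ and $A_3$ and an application of Proposition \ref{prop:()-NikuLattice}.
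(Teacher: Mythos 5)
Your overall strategy -- force divisibilities from Nikulin's length bound $l(K_{\hat Q_8})\le\rk(K_{\hat Q_8}^{\perp})=3$ versus $l(F_{\hat Q_8})=7$, then do bookkeeping in the discriminant form -- is the same as the paper's, and your arithmetic (the values $q(t_0)=-\tfrac12$, $q(t_r)=-\tfrac34$, the isotropy and orthogonality of $\d_1,\d_2$) checks out. But there are two genuine gaps in the logic.

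First, the step ``$l(K_{\hat Q_8})\le 3$ and $l(F_{\hat Q_8})=7$ force $|K_{\hat Q_8}/F_{\hat Q_8}|=16$'' is not a valid deduction, and your maximality argument (``adjoining anything would make the index exceed $16$, contradicting $l\le 3$'') is backwards: enlarging the isotropic subgroup only makes the discriminant group \emph{smaller}, so a larger index is perfectly compatible with length $\le 3$. The length bound gives a \emph{lower} bound on the index (enough divisibility must exist to cut the length from $7$ to $3$); the \emph{upper} bound, i.e.\ the primitivity of $\langle F_{\hat Q_8},\d_1,\d_2\rangle$, needs a separate argument. The paper supplies it geometrically: a set of $13$ disjoint $(-2)$-curves on a K3 supports at most two linearly independent even sets, so no third $2$-divisible class can appear, and any further independent $4$-divisible class would generate a new even set. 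Your fallback (anisotropy of the residual form on $\ZZ_2\times(\ZZ_4)^2$) is a legitimate alternative route but is asserted, not checked, and in any case the relevant question is which classes lie in $\NS(X)$, not which abstract even overlattices exist.

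Second, the identification of the generators ``up to reorder'' is where the real work sits, and your plan does not carry it out. Verifying that your chosen $\d_1,\d_2$ are isotropic and orthogonal shows they are an \emph{admissible} candidate; the proposition asserts that every realization is equivalent to this one. The paper gets this by pinning down the supports: the even sets must be the $v_i$ supported on the extremal curves of four of the six $A_3$'s (by Lemma \ref{lem:8 nodes} and Remark \ref{rem:necessary cond even}), and each $4$-divisible class must be supported on a $4A_3+2A_1$ whose $8$ disjoint extremal curves are themselves $2$-divisible (this comes from the structure of a K3 quotient by an order-$4$ symplectic automorphism, with singularities $4A_3+2A_1$), leaving only the three subsets $\{t_1,\dots,t_4\}$, $\{t_1,t_2,t_5,t_6\}$, $\{t_3,\dots,t_6\}$; the coefficients are then forced by $\d_1\d_2\in\ZZ$ and $\d_2^2\in2\ZZ$. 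Relatedly, your closing appeal to Proposition \ref{prop: Bertin-Garbagati} to conclude $\d_1,\d_2\in\NS(X)$ is misplaced: that proposition realizes roots by smooth rational curves and says nothing about integrality of rational divisor classes; the existence of the divisible classes already follows from the length bound. (A smaller point: your exclusion of $t_0$ only treats $2$-divisible classes; order-$4$ classes involving $t_0$ must also be excluded, e.g.\ because their doubles would be even sets of four curves, which do not exist on a K3.)
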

\begin{proof}
The curves $C_{r}^{1},\,C_{r}^{3},\,r\in\{1,\dots,6\}$ and $C_{0}$
form a configuration of $13$ disjoint $A_{1}$. Therefore there exists
two linearly independent even sets supported on $12$ of these curves.
The curve $C_{0}$ cannot be part of such an even set (see Remark
\ref{rem:necessary cond even}). Therefore, up to permuting the indices,
the three even sets are 
\[
\begin{array}{c}
v_{1}=C_{1}^{1}+C_{1}^{3}+C_{2}^{1}+C_{2}^{3}+C_{3}^{1}+C_{3}^{3}+C_{4}^{1}+C_{4}^{3}\\
v_{2}=C_{3}^{1}+C_{3}^{3}+C_{4}^{1}+C_{4}^{3}+C_{5}^{1}+C_{5}^{3}+C_{6}^{1}+C_{6}^{3}\\
v_{3}=C_{1}^{1}+C_{1}^{3}+C_{2}^{1}+C_{2}^{3}+C_{5}^{1}+C_{5}^{3}+C_{6}^{1}+C_{6}^{3}
\end{array}
\]
and $\frac{1}{2}v_{1},\frac{1}{2}v_{2},\frac{1}{2}v_{3}$ are in fact
element of $\NS(X)$. In the discriminant group of $F_{\hat{Q}_{8}}$,
one has $\frac{1}{2}v_{1}+\frac{1}{2}v_{2}=\frac{1}{2}v_{3}$. Let
us denote by $L_{8}$ the lattice spanned by $F_{\hat{Q}_{8}}$ and
$\frac{1}{2}v_{1},\frac{1}{2}v_{2},\frac{1}{2}v_{3}$. The discriminant
group of $L_{8}$ is $(\ZZ/2\ZZ)^{5}\times(\ZZ/4\ZZ)^{2}$, of  length
$7>\rk(L_{8}^{\perp})=3$, thus there exists other divisibilities.
\\
Since a set of $12$ disjoint $A_{1}$ contains at most two linearly
independent even sets, these are divisibilities by $4$. Comparing
the length, one obtain that there are two linearly independent $4$-divisible
classes. In $A_{1}+6A_{3}$ there is no sub-configuration $4A_{3}+6A_{1}$
that could come from the quotient of a torus by an order $4$ automorphism.
A quotient of a K3 by an order $4$ automorphism has singularities
$4A_{3}+2A_{1}$ and the $4\times2$ disjoint configurations $A_{1}$
supported on the sub-configuration $4A_{3}$ of $4A_{3}+2A_{1}$ must
be divisible by $2$. Thus in our configuration $6A_{3}+A_{1}$, these
$4A_{3}$ are (supported on $4$ times the following elements): 
\[
t_{1},t_{2},t_{3},t_{4}\text{ or\,}\,t_{1},t_{2},t_{5},t_{6}\text{ or }t_{3},t_{4},t_{5},t_{6}.
\]
Once the $4A_{3}$ are chosen, there are two choices for the $2A_{1}$
such that $4A_{3}+2A_{1}$ becomes $4$-divisible: one can take two
disjoint curves in the resolution of the $5^{th}$ or of the $6^{th}$
$A_{3}$'s. Up to permuting the $t_{j}$, and also since one has some
freeness to permute $C_{r}^{1}$ with $C_{r}^{3}$, one can suppose
that 
\[
\d_{1}=(1,1,1,1,2,0)
\]
 (written in the canonical base of the subgroup $\ZZ^{6}\subset F_{\hat{Q}_{8}}^{\vee}$
generated by the $t_{i},\,i\in\{1,\dots,6\}$) is integral. The relations
$\d_{1}\d_{2}\in\ZZ$, $\d_{2}^{2}\in2\ZZ$ forces the other generator
$\d_{2}$ supported on $t_{1},t_{2},t_{5},t_{6}$ to be $\d_{2}=(1,3,2,0,1,3)$
(or $(3,1,2,0,3,1)$, but both generates the same group in the discriminant
group). Then $\d_{3}=(2,0,3,1,3,3)$ is supported on $t_{3},t_{4},t_{5},t_{6}$
and equals $\d_{1}+\d_{2}$ in the discriminant group.\\
  The lattice generated by $F_{\hat{Q}_{8}}$ and the $\d_{i},\,i\in\{1,2,3\}$
has discriminant group $\ZZ_{2}\times(\ZZ_{4})^{2}$ (of length $3$).
Another divisibility by $2$ is not possible because a set of $13$
disjoint $A_{1}$ support at most two linearly independent even sets.
If there was another independent $4$-divisible set, it would create
other even sets. Therefore that lattice is primitive and equals $K_{\hat{Q}_{8}}$.
\end{proof}
\begin{rem}
Let $Y$ be the K3 associated to the $\ZZ_{4}$-cover defined by $\d_{1}$.
There exists on $Y$ a configuration $4A_{3}+6A_{1}$, therefore $Y=\Km(T',\ZZ_{4})$
for some torus $T'$. The order $4$ automorphism $\tau$ such that
$Y/\tau$ is birational to $X$ lifts to an automorphism of $T'$.
The group generated by the lifts and the automorphism $\s\in\aut(T')$
such that $Y$ is birational to $T'/\s$ has order $16$. Thus by
the classification of Fujiki, it contains a translation.
\end{rem}
Let us now prove the following result:
\begin{prop}
\label{prop:GroupQ8}Let $X$ be a $K3$ surface containing a configuration
$A_{1}+6A_{3}$. Then there exists $q\in\mathcal{X}$ such that $X=\Km(T_{q},\hat{Q}_{8})$
where $T_{q}=\HH/\frak{a}$.
\end{prop}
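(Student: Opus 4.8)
The plan is to reverse the construction of the Kummer lattice: starting from a K3 surface $X$ carrying the configuration $A_1+6A_3$, I will produce, via an iterated-cover construction, a torus $T$ together with a reduced symplectic $\hat Q_8$-action whose quotient resolves to $X$. Concretely, by Proposition \ref{prop:The-lattice-K8} the lattice $K_{\hat Q_8}$ is generated by $F_{\hat Q_8}$ together with $\delta_1,\delta_2$; a minimal primitive embedding $K_{\hat Q_8}\hookrightarrow\NS(X)$ is, by Theorem (the equivalence i)$\Leftrightarrow$iii)), guaranteed once the $ADE$ configuration is present, so in particular the $4$-divisible class $\delta_1=(1,1,1,1,2,0)$ defines a cyclic $\ZZ_4$-cover of $X$. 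First I would invoke the last part of the Orbifold-settings subsection: the $\ZZ_4$-cover attached to $\delta_1$ (a $3$-divisible-type divisor for order $4$, i.e. the sub-configuration $4A_3+6A_1$ discussed in the proof of Proposition \ref{prop:The-lattice-K8}) has minimal desingularization a K3 surface $Y$ equipped with an order $4$ automorphism $\tau$ such that $Y/\tau$ is the contraction $\mathcal X$ of the configuration on $X$; moreover $Y$ carries the configuration $4A_3+6A_1$, hence (Bertin's case $\ZZ_4$, recalled in the table) $Y=\Km(T',\ZZ_4)$ for a torus $T'$, with $\s\in\aut(T')$ of order $4$ and $Y$ birational to $T'/\s$.

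Next I would assemble the group. The automorphism $\s$ of $T'$ is the multiplication by $i$ (up to the classification of $\ZZ_4$-actions on tori), so $\s^2=[-1]_{T'}$; and $\tau$ lifts to an automorphism $\tilde\tau$ of $T'$ by Lemma \ref{lem:An-automorphism-lift} (applied at the level of the even/$3$-divisible sets involved, exactly as in the Remark following Proposition \ref{prop:The-lattice-K8}), because the covering data $\delta_2,\delta_3$ are $\tau$-invariant. Lemma \ref{lem:An-automorphism-lift}.B then gives an exact sequence $0\to\la\s\ra\to\la\s,\tilde\tau\ra\to\la\tau\ra\to1$, so $G:=\la\s,\tilde\tau\ra$ is a symplectic group of automorphisms of $T'$ of order $16$. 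I would check, using the explicit action of $\tilde\tau$ on the $2$-torsion of $T'$ coming from how $\delta_2$ permutes the six $A_3$'s, that $G$ is an extension of $\ZZ_4$ by $\ZZ_4$ with a unique involution $[-1]_{T'}$, and that its image $\bar G$ in $GL_2(\CC)$ (the analytic representation) is $\hat Q_8$'s linear part, i.e. $\bar G\cong Q_8$: the point is that $\tilde\tau$ and $\s$ do not commute, $\tilde\tau\s\tilde\tau^{-1}=\s^{-1}\cdot(\text{translation})$, and modulo translations one recovers the quaternion relations $i^2=j^2=(ij)^2=-1$.

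Then I would identify $(T',G)$ with Fujiki's model. By construction $G$ contains a nontrivial translation subgroup $G_0$ (order $16>8$ forces this, by Fujiki's classification Theorem \ref{thm:(Fujiki-)-Translated}, since no reduced symplectic group on a $2$-torus has order $16$ — the reduced options are $Q_8,D_{12},T_{24}$ of orders $8,12,24$). The reduced quotient $\bar G=G/G_0$ acts on $T:=T'/G_0$, is symplectic, has no global fixed point (if it did, $G$ would be conjugate into $\L_{Q_8}^\times\ltimes\mathbb A_0$ which fixes $0$, contradicting that the original configuration on $X$ has the $A_1$-part, i.e.\ exactly one short orbit of $2$-torsion under the linear part — the hallmark of the fixed-point-free case), and $\bar G\cong Q_8$. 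By Theorem \ref{thm:(Fujiki-)-Translated} there is thus $q\in\mathcal X$ with $(T,\bar G)\cong(T_q,\hat Q_8)$, $T_q=\HH/\L_{T_{24}}=\HH/\frak a$. Finally $X$, being the minimal resolution of $\mathcal X=Y/\tau=(T'/G_0)/\bar G=T_q/\hat Q_8$, equals $\Km(T_q,\hat Q_8)$.

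The main obstacle is the middle step: verifying that the lift $\tilde\tau$ exists and that $G=\la\s,\tilde\tau\ra$ has the right structure and, crucially, a nontrivial translation subgroup — i.e.\ ruling out that $\tilde\tau$ could be chosen to commute with $\s$ and fix $0$. This is where one must track carefully how $\delta_2$ permutes the six $A_3$-configurations and their supporting $2$-torsion points, and feed that combinatorics through Lemma \ref{lem:An-automorphism-lift}; the fixed-point-free-ness of the resulting $\bar Q_8$-action, as opposed to the already-classified $Q_8\subset T_{24}$ case giving $3A_1+4D_4$, is forced by the shape of the singularities ($A_1+6A_3$ versus $3A_1+4D_4$), and pinning this down rigorously is the delicate part.
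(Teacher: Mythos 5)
Your strategy --- climb up through the $\ZZ_4$-cover attached to $\delta_1$, land on $\Km(T',\ZZ_4)$, lift the deck transformation, and let Fujiki's classification sort out the resulting order-$16$ group --- is viable, and it is in fact the route the paper itself sketches in the Remark following Proposition \ref{prop:The-lattice-K8}. Your endgame is sound: order $16$ forces a nontrivial translation subgroup since no reduced symplectic group has order $16$, and the reduced quotient must be the fixed-point-free $Q_8$ because the singularities are $A_1+6A_3$ rather than $2A_1+3A_3+2D_4$ or $3A_1+4D_4$; note that this cheap argument already makes your second paragraph (unique involution, quaternion relations modulo translations) unnecessary. The paper's actual proof takes a shorter path: it uses the two linearly independent even sets supported on the $12$ extremal curves of the $6A_3$ to build the bidouble $(\ZZ_2)^2$-cover; the pull-back of $C_0$ and of the six central curves is a $16A_1$ configuration, so that cover is $\Km(T)$ by Nikulin's theorem, the $(\ZZ_2)^2$ deck group lifts to $T$ because it visibly preserves the $16A_1$ branch locus (Lemma \ref{lem:An-automorphism-lift}.A applies verbatim to even sets), and one lands directly on a symplectic group of order $8$, hence $Q_8$, hence $\hat Q_8$ by the singularity type. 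What the bidouble route buys is precisely the step your write-up leaves open.

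That open step is a genuine gap as written. Lemma \ref{lem:An-automorphism-lift} is stated for even sets and double covers; it does not, as stated, produce a lift of the order-$4$ automorphism $\tau$ of $Y$ through the tower $T'\to T'/\s\leftarrow Y$, and your justification (``because the covering data $\delta_2,\delta_3$ are $\tau$-invariant'') conflates divisible classes on $X$ with the $4$-divisible class on $Y$ that actually defines the cover back to $T'$ --- it is the latter that $\tau$ must preserve, and checking this requires tracking how the configuration $4A_3+6A_1$ on $Y$ sits over the configuration on $X$. You flag this yourself as ``the delicate part,'' but it is the heart of the argument and is not carried out. A minor further point: invoking ``the equivalence i)$\Leftrightarrow$iii)'' to obtain the divisibility $\delta_1$ is circular, since Proposition \ref{prop:GroupQ8} is exactly that implication; this is harmless only because Proposition \ref{prop:The-lattice-K8} already supplies $\delta_1,\delta_2$ for any K3 surface containing the configuration, so you should cite that proposition alone.
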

\begin{proof}
By the proof of Proposition \ref{prop:The-lattice-K8}, there exist
two linearly independent even sets which are supported on the $12$
disjoint rational curves of the sub-configuration $6A_{3}$ in $A_{1}+6A_{3}$.
\\
Taking the associated bidouble cover and its minimal model, the pull-back
of the $A_{1}$ and central curves in the six $A_{3}$ is a set of
$16$ disjoint $A_{1}$ curves $C_{i}$, this is therefore a Kummer
surface $\Km(T)$. Since the automorphisms in the group $(\ZZ/2\ZZ)^{2}$
preserves the branch locus $\sum_{1}^{16}C_{i}$, these automorphisms
lift to automorphisms of $T$. By the classification of Fujiki that
group must be isomorphic to $Q_{8}$ and the result follows from Fujiki's
classification Theorem \ref{thm:(Fujiki-)-Translated}.
\end{proof}
Let now $X$ be a K3 surface such that there exists a primitive embedding
of $K_{\hat{Q}_{8}}$ into $\NS(X)$. 
\begin{thm}
There exists a complex torus $T$ and a group of automorphism $G\simeq Q_{8}$
such that $X=\Km(T,G)$.
\end{thm}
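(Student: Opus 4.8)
The plan is to upgrade the lattice embedding to an honest configuration of $ADE$ curves on $X$ and then quote Proposition \ref{prop:GroupQ8}. By Proposition \ref{prop:The-lattice-K8} the lattice $K_{\hat{Q}_8}$ is negative definite of rank $19$ and contains $F_{\hat{Q}_8}\simeq A_1\oplus A_3^{\oplus 6}$ with index $16$, so a primitive embedding $K_{\hat{Q}_8}\hookrightarrow\NS(X)$ forces $\rho(X)\in\{19,20\}$, and $\rho(X)=19$ occurs exactly when $\NS(X)=K_{\hat{Q}_8}$ is negative definite, i.e. when $X$ is not projective.

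Assume first $X$ is projective, so $\rho(X)=20$ and $\NS(X)$ has signature $(1,19)$. Then $K_{\hat{Q}_8}^{\perp}$ has rank $1$ and is spanned by a class of positive square; after composing the embedding with an element of the Weyl group of $\NS(X)$ generated by reflections in $(-2)$-classes --- which alters neither the isometry class of $K_{\hat{Q}_8}$ nor the type of configuration it can support --- I may assume a generator $h$ of $K_{\hat{Q}_8}^{\perp}$ is pseudoample. Since $K_{\hat{Q}_8}$ is primitive of rank $19=\rho(X)-1$ and is contained in the equally primitive rank-$19$ lattice $h^{\perp}\cap\NS(X)$, the two coincide. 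I then apply Proposition \ref{prop: Bertin-Garbagati} with $L=h^{\perp}\cap\NS(X)=K_{\hat{Q}_8}$ and root lattice $R=F_{\hat{Q}_8}$: hypothesis (1) is Proposition \ref{prop:The-lattice-K8}, and hypothesis (2), that $R$ and $L$ have the same roots, is provided by Sch\"utt \cite{Schutt} (alternatively one verifies by hand, using the generators $\frac{1}{2}v_i$ and $\d_i$ from Proposition \ref{prop:The-lattice-K8}, of squares $-4$ and $-6$ respectively, that no coset of $F_{\hat{Q}_8}$ in $K_{\hat{Q}_8}$ contains a vector of square $-2$). Proposition \ref{prop: Bertin-Garbagati} then yields a basis of $F_{\hat{Q}_8}$ made of smooth irreducible rational curves; their pairwise intersections being prescribed by the intersection form of $F_{\hat{Q}_8}$, these curves form a configuration $\mathcal{C}_{\hat{Q}_8}$ of type $A_1+6A_3$ on $X$. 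If instead $X$ is not projective, then $\NS(X)=K_{\hat{Q}_8}$ is negative definite of rank $19$, and by Remark \ref{rem:SimpleBase} there are exactly $19$ $(-2)$-curves on $X$, they span a root lattice $F$ whose saturation is $\NS(X)$, and $F$ is an orthogonal sum of lattices $A_n$. Each such curve has class a root of $\NS(X)=K_{\hat{Q}_8}$, hence (by hypothesis (2) again) a root of $F_{\hat{Q}_8}$, so $F\subseteq F_{\hat{Q}_8}$; as neither $A_1$ nor $A_3$ admits a proper finite-index sub-root-lattice, comparing ranks yields $F=F_{\hat{Q}_8}$, and again $X$ carries a configuration of type $A_1+6A_3$.

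In both cases Proposition \ref{prop:GroupQ8} produces $q\in\mathcal{X}$ with $X=\Km(T_q,\hat{Q}_8)$, where $T_q=\HH/\qa$; since $\hat{Q}_8\simeq Q_8$ as abstract groups, taking $T=T_q$ and $G=\hat{Q}_8$ finishes the proof. The step I expect to require the most care is hypothesis (2) of Proposition \ref{prop: Bertin-Garbagati} --- that passing to the primitive closure $K_{\hat{Q}_8}$ of $F_{\hat{Q}_8}$ inside $\NS(X)$ creates no new $(-2)$-class --- which is where one either invokes Sch\"utt's general result or runs the discriminant-form computation for $K_{\hat{Q}_8}$; the rest is bookkeeping with the rank count, the Weyl-chamber normalization of $h$, and the already-established Propositions \ref{prop:The-lattice-K8} and \ref{prop:GroupQ8}.
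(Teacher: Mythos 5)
Your argument is correct and follows the paper's own proof: establish that $K_{\hat{Q}_{8}}$ and $F_{\hat{Q}_{8}}$ have the same roots (the paper does this by a Magma count of the $37$ positive roots, you by citing Sch\"utt or a coset computation), apply Proposition \ref{prop: Bertin-Garbagati} together with Remark \ref{rem:SimpleBase} to produce an $A_{1}+6A_{3}$ configuration of smooth rational curves, and conclude by Proposition \ref{prop:GroupQ8}. The only difference is that you spell out what the paper leaves implicit, namely the projective/non-projective dichotomy and the Weyl-chamber choice of the pseudoample class $h$.
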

\begin{proof}
Using Magma, one computes that the number of roots in $K_{\hat{Q}_{8}}$
($37$ of such) equals the number of roots of $F_{\hat{Q}_{8}}$.
Thus by Lemma \ref{prop: Bertin-Garbagati}, there exists a configuration
$A_{1}+6A_{3}$ of smooth irreducible rational curves. We then apply
Proposition \ref{prop:GroupQ8} and Remark \ref{rem:SimpleBase}.
\end{proof}

\subsection{The configuration $\hat{T}_{24}:\,\,4A_{2}+2A_{3}+A_{5}$\label{subsec:The-configuration Q24}}

Let $X=\Km(T,\hat{T}_{24})$ be a K3 surface obtained as the desingularization
of the quotient of a complex torus $T=T_{q}$ by the action of $\hat{T}_{24}$.
\\
One computes that the order $3$ automorphism $w=(t;\frac{1}{2}s)^{2}$
fixes a unique $2$-torsion point on the torus $T_{q}=\HH/\L_{\hat{T}_{24}}$;
that point is not in the fixed point sets of the automorphisms $i,\,j'=(j,\a),\,k'=(k,\a)$.
The K3 surface $T_{q}/\hat{T}_{24}$ is a quotient of $T_{q}/\hat{Q}_{8}$
(where $\hat{Q}_{8}\subset\hat{T}_{24}$ is the unique normal subgroup
of order $8$) by the order $3$ automorphism $w'$ induced by $w$.
\\
An order $3$ automorphism on a smooth K3 has $6$ isolated fixed
points. In our situation, two of these fixed points are on the isolated
$A_{1}$ in $6A_{3}+A_{1}$, thus taking the resolution one gets an
$A_{5}$. The configurations $6A_{3}$ on $T_{q}/\hat{Q}_{8}$ are
permuted by $3$, creating $2A_{3}$ on the quotient surface, there
are moreover $4A_{2}$ coming from the $4$ other fixed points of
$w'$. We thus obtain:
\begin{lem}
\textup{The K3 surface $X=\Km(T,\hat{T}_{24})$ contains a configuration
$4A_{2}+2A_{3}+A_{5}.$} 
\end{lem}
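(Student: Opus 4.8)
Throughout write $Y=\Km(T_q,\hat Q_8)$ for the minimal resolution of $T_q/\hat Q_8$. The plan is to realise $X$ as the minimal resolution of a further $\ZZ/3\ZZ$-quotient of $Y$ and then to track the exceptional curves of $Y\to T_q/\hat Q_8$ through this second quotient. By the Lemma of Subsection~\ref{subsec:Q8}, $Y$ carries a configuration $A_1+6A_3$; fix notation $C_0$ for the $(-2)$-curve over the $A_1$ and $C_r^1,C_r^2,C_r^3$ ($1\le r\le 6$, with $C_r^1C_r^2=C_r^2C_r^3=1$) for the six $A_3$-chains. Since $\hat Q_8$ is a normal subgroup of $\hat T_{24}$ with $\hat T_{24}/\hat Q_8\simeq\ZZ/3\ZZ$ generated by the class of $w=(t;\tfrac12 s)^2$, the automorphism $w$ descends to an order $3$ automorphism $\bar w$ of $T_q/\hat Q_8$ with $T_q/\hat T_{24}=(T_q/\hat Q_8)/\bar w$; as $\bar w$ permutes the exceptional curves it lifts to $w'\in\aut(Y)$, which fixes the holomorphic $2$-form pulled back from $T_q$, hence is symplectic of order $3$. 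Thus $X$ is the minimal resolution of $Y/w'$, and it is classical that a symplectic order $3$ automorphism of a K3 surface has exactly $6$ isolated fixed points, each with analytic model $(x,y)\mapsto(\zeta_3 x,\zeta_3^{-1}y)$, so $Y/w'$ has exactly $6$ singular points, all of type $A_2$.

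The heart of the proof is a bookkeeping of these $6$ fixed points against the exceptional curves of $Y\to T_q/\hat Q_8$. First I would argue that $w'$ preserves $C_0$: the unique $2$-torsion point of $T_q$ fixed by $w$ lies outside the fixed-point sets of $i,j',k'$, hence in the $\hat Q_8$-orbit producing the $A_1$-point, so (as $w$ normalizes $\hat Q_8$ and permutes the four orbits of $2$-torsion points) $\bar w$ fixes the $A_1$-point and $w'$ fixes $C_0$ set-wise; since $w'$ has no fixed curve, $w'|_{C_0}$ is an order $3$ automorphism of $\PP^1$ and contributes exactly $2$ of the $6$ fixed points. Next, if $w'$ preserved one of the six $A_3$-chains it would fix its central curve set-wise and, being of order $3$, also fix both outer curves set-wise, hence fix one extra point on each outer curve; together with the two points on $C_0$ this would give more than $6$ fixed points. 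Therefore $w'$ permutes the six $A_3$-chains in two $3$-cycles and fixes no point on them, and the remaining $4$ fixed points lie off every exceptional curve of $Y\to T_q/\hat Q_8$.

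Finally I would read off the configuration on $X$. The $4$ "free" fixed points give $4A_2$; each of the two $3$-cycles of $A_3$-chains maps isomorphically (via an étale local map) to a single $A_3$-chain lying in the smooth locus of $Y/w'$, hence to $2A_3$ on $X$. For the last piece: the image $\bar C_0$ of $C_0$ in $Y/w'$ is a smooth rational curve, by Riemann--Hurwitz for the triple cover $C_0\to\bar C_0$ totally ramified at the two fixed points on $C_0$, and it passes through the two $A_2$-points lying under those fixed points. Since $C_0$ is the full preimage of $\bar C_0$ one has $3\,\bar C_0^2=C_0^2=-2$, and the standard resolution of an $A_2=\tfrac13(1,2)$ quotient singularity shows that the strict transform $\tilde C_0$ on $X$ satisfies $\tilde C_0^2=-2$ and meets, transversally in one point, exactly one end-curve of each of the two corresponding $A_2$-chains $E_1^1,E_1^2$ and $E_2^1,E_2^2$. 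Hence the dual graph of $E_1^2,E_1^1,\tilde C_0,E_2^1,E_2^2$ is a length-five chain, i.e. an $A_5$-configuration. Since these configurations arise from pairwise disjoint loci on $Y/w'$, the surface $X$ contains $4A_2+2A_3+A_5$ (consistently, $4\cdot 2+2\cdot 3+5=19$). The step I expect to be the main obstacle is this last local computation: checking that $\bar C_0$ glues the two $A_2$-chains into a single $A_5$ rather than contributing a separate $(-2)$-curve or producing a $D$- or $E$-type graph; everything else is either Fujiki's and Nikulin's classifications or the fixed-point count above.
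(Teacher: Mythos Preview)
Your proof is correct and follows the same route as the paper: factor $T_q/\hat T_{24}$ through $T_q/\hat Q_8$, use that the induced order-$3$ symplectic automorphism $w'$ on $Y=\Km(T_q,\hat Q_8)$ has six isolated fixed points, two of which lie on the $A_1$-curve $C_0$ (producing the $A_5$) while the six $A_3$-chains are permuted in two $3$-cycles (producing the $2A_3$) and the remaining four fixed points give $4A_2$. The paper's argument is terser --- it simply asserts the permutation of the $6A_3$ and that ``taking the resolution one gets an $A_5$'' without the fixed-point bookkeeping or the local self-intersection computation you supply --- but the substance is identical.
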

Let now $X$ be any K3 surface containing a configuration $4A_{2}+2A_{3}+A_{5}$. 
\begin{prop}
\label{prop:T24prop19}There exists a torus $T$ with an action of
the group $\hat{T}_{24}$ such that $X=\Km(T,\hat{T}_{24})$.
\end{prop}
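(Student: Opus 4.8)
The plan is to mimic the strategy used for $\hat{Q}_{8}$ in Proposition \ref{prop:GroupQ8}: produce a torus $T$ together with a $\hat{T}_{24}$-action by exhibiting the right covering structure on $X$ and invoking Fujiki's classification (Theorem \ref{thm:(Fujiki-)-Translated}). The configuration $4A_{2}+2A_{3}+A_{5}$ has Picard contribution $\rho = 4\cdot 2 + 2\cdot 3 + 5 = 19$, so $\NS(X)$ is as large as possible and we have essentially no room to spare; this rigidity is what will force the group to be $\hat{T}_{24}$ and nothing larger.

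First I would analyze the divisibility structure of $F_{\hat{T}_{24}}$ inside $\NS(X)$. The six disjoint $A_{2}$-pairs available are the four genuine $A_{2}$'s together with the two $A_{2}$-subconfigurations sitting inside the $A_{5}$ (recall from Remark \ref{rem:necessary cond even}C that the two disjoint $A_{2}$'s in an $A_{5}$ carry no obstruction to being part of a $3$-divisible set, whereas the $A_{2}$'s inside the two $A_{3}$'s are obstructed). That gives $6$ disjoint $A_{2}$ configurations, so by the length count (Proposition \ref{prop:()-NikuLattice}, applied as in the $\ZZ/3\ZZ$ case) there must be a $3$-divisible divisor $E = \sum_{i=1}^{6} C_i^1 + 2C_i^2$ supported on these six $A_{2}$'s. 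Separately, the central curves of the two $A_{3}$'s plus various $(-2)$-curves give a configuration of $\geq 12$ disjoint $A_1$'s, so by Lemma \ref{lem:8 nodes} there is also an even set of $8$ of these, hence a double cover structure. The delicate combinatorial bookkeeping here — pinning down exactly which curves lie in the $3$-divisible set and which in the even set, and checking they are compatible — is the analogue of the explicit computation in Proposition \ref{prop:The-lattice-K8}, and I expect it to be the most laborious (though routine) part.

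Next, I would take the triple cover $Y_1 \to X$ associated to the $3$-divisible set $E$ and its minimal desingularisation: by the discussion in the \emph{Orbifold settings} subsection, since $E$ supports $6$ (not $9$) disjoint $A_2$'s, $Y_1$ is again a K3. The order-$3$ deck automorphism $\tau$ satisfies $Y_1/\tau \cong \mathcal{X}$ (the contraction of $\mathcal{C}$), and the $\hat{Q}_{8}$-type even-set data on $X$ should pull back to put a $12A_1$ (with two independent even sets) on $Y_1$, so that by Proposition \ref{prop:GroupQ8} $Y_1 = \Km(T', \hat{Q}_{8})$ for a torus $T'$. Then I use Lemma \ref{lem:An-automorphism-lift}: the order-$3$ automorphism $\tau$ of $Y_1$ preserves the relevant even set(s) (this needs checking — it is the condition $E = \sigma^*E$ in that lemma, which should follow because $\tau$ permutes the constituent $A_1$'s in a way forced by the configuration), hence lifts to an automorphism $\tau'$ of the K3 $Y_1$ and then, via the exact sequence in Lemma \ref{lem:An-automorphism-lift}(B) together with the lift of $\hat{Q}_8$ to $T'$, one assembles a group $G'$ of automorphisms of $T'$ with $\langle \hat{Q}_8\rangle \trianglelefteq G'$, $|G'| = 24$, acting symplectically (an order-$3$ symplectic automorphism of a K3 lifts to a symplectic one of the covering torus).

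Finally, I would conclude by Fujiki's classification. The torus $T'$ carries a symplectic $G'$ with $|G'|=24$; let $G'_0$ be its translation subgroup and apply Theorem \ref{thm:(Fujiki-)-Translated} to the reduced quotient. Since $X = T'/G'$ has the configuration $4A_2+2A_3+A_5$ with $\rho=19$, a count of exceptional curves / comparison of singularities rules out $G'$ having translations and rules out $G' \simeq Q_8$ acting (which gives only $\rho$ up to $19$ but the wrong configuration $A_1 + 6A_3$ or $2A_1+3A_3+2D_4$ etc.), forcing $G' \simeq T_{24}$ with no global fixed point, i.e. $(T',G') \cong (T_q, \hat{T}_{24})$ for some $q \in \mathcal{X}$, and hence $X = \Km(T_q,\hat{T}_{24})$. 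The main obstacle, as noted, is the explicit verification that the $3$-divisible and even sets exist with compatible supports and that the order-$3$ automorphism genuinely preserves them so that all the lifting lemmas apply — everything after that is a matter of matching the resulting group against Fujiki's short list.
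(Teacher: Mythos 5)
Your overall architecture is the paper's: take the triple cover associated to a $3$-divisible set, recognize it as $\Km(T',\hat{Q}_{8})$, lift the order-$3$ deck transformation up to the torus, and match the resulting order-$24$ group against Fujiki's list. But two of your intermediate steps are genuinely wrong as stated. First, the length count for the $3$-divisible set does not work on the six unobstructed $A_{2}$'s alone: $6A_{2}$ has rank $12$ and discriminant length $6\leq\min(12,22-12)=10$, so Proposition \ref{prop:()-NikuLattice} gives nothing. The correct count (the paper's) uses all \emph{eight} disjoint $A_{2}$ sub-configurations of $4A_{2}+2A_{3}+A_{5}$ (the four genuine ones, the two in the $A_{5}$, and one inside each $A_{3}$): rank $16$, length $8>\min(16,6)=6$, forcing a $3$-divisible class, whose support is then pinned to the six unobstructed $A_{2}$'s by Remark \ref{rem:necessary cond even}C.

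Second, and more seriously, the ``$\geq 12$ disjoint $A_{1}$'s'' and the even set on $X$ that you propose to pull back to $Y_{1}$ do not exist. The maximum number of pairwise disjoint $(-2)$-curves in $4A_{2}+2A_{3}+A_{5}$ is $4+2\cdot 2+3=11$, so Lemma \ref{lem:8 nodes} does not apply; worse, by Remark \ref{rem:necessary cond even}A no curve of an $A_{2}$ can lie in an even set, so at most $2+2+3=7<8$ curves are even candidates and $X$ carries no even set at all. Hence there is no ``double cover structure'' on $X$ and nothing to pull back. The fix is to reverse the logic: compute directly that the triple cover $Y_{1}$ branched on the six $A_{2}$'s carries the configuration $6A_{3}+A_{1}$ (the two unbranched $A_{3}$'s each pull back to three disjoint $A_{3}$'s, and the residual central curve of the $A_{5}$ gives the $A_{1}$), then apply Proposition \ref{prop:GroupQ8} to $Y_{1}$ itself --- that proposition's proof \emph{produces} the two independent even sets on the $12$ disjoint curves of the $6A_{3}$ in $Y_{1}$. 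One then checks (as in the paper) that the deck automorphism $\tau$ must preserve these two even sets, since otherwise it would create extra divisibilities incompatible with the lattice, and Lemma \ref{lem:lift order 3 to bidoubles} lifts $\tau$ through the bidouble cover to the torus. Your concluding appeal to Fujiki is fine once these repairs are made.
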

\begin{proof}
The configuration $4A_{2}+2A_{3}+A_{5}$ contains $8$ disjoint $A_{2}$
sub-configurations. The discriminant group of $8A_{2}$ is $(\ZZ_{3})^{8}$.
It has length $8>\min(16,22-16)=6$, therefore there exists a non-trivial
$3$-divisible class $D$ with support on $6$ of the $8A_{2}$. By
Remark \ref{rem:necessary cond even}, the support of $D$ is the
sub-configuration $6A_{2}$ contained in $4A_{2}+A_{5}$. \\
The surface associated to the triple cover branched on the support
of $D$ is a K3 surface $Y$ with a configuration $6A_{3}+A_{1}$
and having an order $3$ automorphism $\s$. We proved in Proposition
\ref{prop:GroupQ8} that the surface $Y$ is of type $Y=\Km(T,\hat{Q}_{8})$.
The automorphism $\s$ must preserves the $2$ linearly independent
even sets on $Y$ supported on the $6A_{3}$, otherwise there would
be other divisibilities relations. Therefore by Lemma \ref{lem:lift order 3 to bidoubles},
the automorphism $\s$ lifts to the $(\ZZ_{2})^{2}$-cover of $Y$,
which contains a $16A_{1}$ configuration. These $16A_{1}$ are pull-back
of curves in $X$, thus $\s$ lifts to an automorphism $\tilde{\s}$
of $T$, and $X$ is the Kummer surface associated to the group generated
by $\hat{Q}_{8}$ and $\tilde{\s}$, which has order divisible by
$3$. By Theorem \ref{thm:(Fujiki-)-Translated}, that group is $\hat{T}_{24}$.
\end{proof}
Let again $X$ be a K3 surface containing a configuration $4A_{2}+2A_{3}+A_{5}$.
The discriminant group of the lattice $F_{\hat{T}_{24}}$ generated
by the curves in $4A_{2}+2A_{3}+A_{5}$ is 
\[
(\ZZ_{3})^{4}\times(\ZZ_{4})^{2}\times\ZZ_{6}.
\]
It has length $5$. There exists an integral class $\g=\frac{1}{3}D$,
where $D$ is supported on the $6$ disjoint $A_{2}$ on the sub-configuration
$4A_{2}+A_{5}$ (see proof of Proposition \ref{prop:T24prop19}).
The discriminant group of the lattice generated by $\g$ and $F_{\hat{T}_{24}}$
is 
\[
(\ZZ_{3})^{2}\times(\ZZ_{4})^{2}\times\ZZ_{6}\simeq(\ZZ_{12})^{2}\times\ZZ_{6},
\]
which has length $3=22-19$. By Remark \ref{rem:necessary cond even},
there are no other set of $6$ disjoint $A_{2}$ which is $3$-divisible,
nor there are even sets, therefore we get the following result:
\begin{prop}
\label{prop:The-lattice-K24}The lattice generated by $F_{\hat{T}_{24}}$
and $\d$ is the minimal primitive sub-lattice $K_{\hat{T}_{24}}\subset\NS(X)$
containing $F_{\hat{T}_{24}}$. The discriminant group of $K_{\hat{T}_{24}}$
is $(\ZZ_{12})^{2}\times\ZZ_{6}$.
\end{prop}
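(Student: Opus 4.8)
The plan is to establish the only point not already settled by the computation preceding the statement, namely that $K:=\langle F_{\hat{T}_{24}},\gamma\rangle$ (with $\gamma=\frac{1}{3}D$ the $3$-divisible class produced in the proof of Proposition~\ref{prop:T24prop19}; this is the class denoted $\delta$ in the statement) is primitively embedded in $\NS(X)$, so that $K=K_{\hat{T}_{24}}=\NS(X)\cap\big(F_{\hat{T}_{24}}\otimes\QQ\big)$. First I would record the structure of the glue class: since $\NS(X)$ is a lattice, $\gamma$ pairs integrally with every exceptional curve, so $\bar\gamma\in F_{\hat{T}_{24}}^\vee/F_{\hat{T}_{24}}\cong(\ZZ_3)^4\times(\ZZ_4)^2\times\ZZ_6$; it has order $3$ because $3\gamma=D\in F_{\hat{T}_{24}}$, and it is isotropic because $\NS(X)$ of a K3 surface is an even lattice, so $\gamma^2\in2\ZZ$. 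By Remark~\ref{rem:necessary cond even}(C) and the fact that a $3$-divisible divisor on a K3 surface is supported on six or nine disjoint $A_2$'s, $D$ must be supported on all six disjoint $A_2$'s contained in $4A_2+A_5$; hence $\bar\gamma$ is trivial on the $(\ZZ_4)^2$-summand, is a generator in each of the four $\ZZ_3$-summands, and has the unique $3$-torsion class of the $\ZZ_6$-summand as its last component. Nikulin's correspondence between overlattices and isotropic subgroups of the discriminant group then gives $K^\vee/K\cong\bar\gamma^\perp/\langle\bar\gamma\rangle$, and a block computation in the discriminant form reproduces the group $(\ZZ_3)^3\times(\ZZ_4)^2\times\ZZ_2\simeq(\ZZ_{12})^2\times\ZZ_6$ of length $3$ already obtained above.

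The heart of the argument is the primitivity claim. Writing $H:=K_{\hat{T}_{24}}/F_{\hat{T}_{24}}$, an isotropic subgroup of $F_{\hat{T}_{24}}^\vee/F_{\hat{T}_{24}}$ containing $\langle\bar\gamma\rangle$, I must show $H=\langle\bar\gamma\rangle$. Decomposing $H$ into its $p$-primary components and further into cyclic factors reduces this to listing the $p$-divisible combinations of curves of the configuration. There is nothing for $p\ge5$, since $F_{\hat{T}_{24}}^\vee/F_{\hat{T}_{24}}$ has no torsion of that order. For $p=3$: the $3$-primary part $(\ZZ_3)^5$ has exponent $3$, so there is no $\ZZ_9$ factor, and by Remark~\ref{rem:necessary cond even}(C) a $3$-divisible set of disjoint $A_2$'s cannot use the sub-$A_2$'s of the two $A_3$'s, leaving only the six disjoint $A_2$'s of $4A_2+A_5$, so (again because a $3$-divisible divisor has six or nine disjoint $A_2$'s) this set is exactly the one giving $\gamma$. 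For $p=2$: a nonzero $2$-torsion class of $F_{\hat{T}_{24}}^\vee/F_{\hat{T}_{24}}$ lying in $\NS(X)$ is the class of an even set of disjoint $(-2)$-curves, but by Remark~\ref{rem:necessary cond even}(A) such an even set can only contain the alternating curves of the two $A_3$'s and of the $A_5$, that is at most $2\cdot2+3=7$ disjoint curves, whereas an even set on a K3 surface consists of $8$ or $16$ disjoint curves; so the $2$-primary part of $H$ is trivial. Hence $H=\langle\bar\gamma\rangle$, $K=K_{\hat{T}_{24}}$, and its discriminant group is $(\ZZ_{12})^2\times\ZZ_6$.

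As a sanity check, Proposition~\ref{prop:()-NikuLattice} gives $l(K_{\hat{T}_{24}})=l(K_{\hat{T}_{24}}^\perp)\le\min(19,22-19)=3$, in agreement with the length $3$ found above; but this bound by itself does not exclude a further index-$2$ or index-$3$ enlargement, since passing to an overlattice of prime index can leave the length of the discriminant group unchanged. So the geometric constraints of Remark~\ref{rem:necessary cond even}, together with the scarcity of disjoint $A_2$'s and of disjoint $(-2)$-curves inside $4A_2+2A_3+A_5$, are what actually close the argument. I expect this exhaustive elimination of putative divisibility relations — in particular the certainty that any glue class which could lie in $\NS(X)$ is detected by an honest even set or an honest $3$-divisible set of disjoint configurations — to be the only delicate point; identifying $\bar\gamma^\perp$ and the resulting discriminant group is routine.
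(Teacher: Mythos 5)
Your proof is correct and follows essentially the same route as the paper: adjoin the $3$-divisible class $\gamma$, compute the discriminant group of the resulting overlattice, and invoke Remark \ref{rem:necessary cond even} to rule out any further even sets or $3$-divisible sets. You are in fact more explicit than the paper on the two points it leaves implicit, namely the $p$-primary reduction of a putative larger glue group to honest even or $3$-divisible configurations (with the count $2\cdot2+3=7<8$ killing the $2$-primary part), and the observation that the length-$3$ bound from Proposition \ref{prop:()-NikuLattice} alone would not exclude a further enlargement.
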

Thus if $X=\Km(T,\hat{T}_{24})$, then there is a primitive embedding
of $K_{\hat{T}_{24}}$ into $\NS(X)$. Conversely, let $X$ be any
K3 surface:
\begin{thm}
Suppose that there is a primitive embedding of $K_{\hat{T}_{24}}$
into $\NS(X)$, then $X=\Km(T,\hat{T}_{24})$.
\end{thm}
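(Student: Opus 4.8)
The strategy mirrors the $\hat{Q}_8$ case. Given a primitive embedding $K_{\hat{T}_{24}} \hookrightarrow \NS(X)$, I first want to produce an honest configuration $4A_2 + 2A_3 + A_5$ of smooth rational curves on $X$, and then invoke Proposition \ref{prop:T24prop19} together with Remark \ref{rem:SimpleBase}. To get the curves, the natural tool is Proposition \ref{prop: Bertin-Garbagati}: I would choose a pseudoample $h \in \NS(X)$, set $L = h^\perp$, and observe that $F_{\hat{T}_{24}}$ is a root lattice of the required shape $A_2^{\oplus 4} \oplus A_3^{\oplus 2} \oplus A_5$. If $L$ is an overlattice of finite index of $F_{\hat{T}_{24}}$ with the same roots, the Proposition gives a basis supported on smooth irreducible rational curves, hence the configuration we need.

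\textbf{Verifying the hypotheses of Proposition \ref{prop: Bertin-Garbagati}.} The delicate point is hypothesis (2): the roots of $F_{\hat{T}_{24}}$ and those of its primitive closure $K_{\hat{T}_{24}}$ (equivalently, of $L$ restricted appropriately) must coincide — i.e. passing to the primitive closure does not create any new $(-2)$-vectors. I would compute this directly, exactly as in the $\hat{Q}_8$ case: using Magma (or by hand, since $K_{\hat{T}_{24}}$ is explicitly $F_{\hat{T}_{24}} + \ZZ\gamma$ with $\gamma = \frac13 D$ supported on the $6A_2 \subset 4A_2 + A_5$), one checks that the number of roots in $K_{\hat{T}_{24}}$ equals the number of roots in $F_{\hat{T}_{24}}$, namely $2(4\cdot 2\cdot 3/3) \cdots$ — more simply, $|\{\text{roots of } A_2\}| = 6$, of $A_3$ is $12$, of $A_5$ is $30$, so $F_{\hat{T}_{24}}$ has $4\cdot 6 + 2 \cdot 12 + 30 = 78$ roots, and the claim is that $K_{\hat{T}_{24}}$ has exactly $78$ as well. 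This is a finite check: the only candidate new roots would have to use $\gamma$, and the Remark \ref{rem:necessary cond even} obstruction (a sub-$A_2$ of the $A_5$ cannot sit in a $3$-divisible set together with the wrong partner, and $\ZZ_4 \not\supset \ZZ_3$) already heavily constrains things; one confirms no root of norm $-2$ arises. For hypothesis (1), one uses that $L = h^\perp$ is negative definite of the right rank $19$ and contains $F_{\hat{T}_{24}}$; if $\rho(X) = 20$ one first replaces $h$ by a suitable class so that $h^\perp$ is exactly the span of $K_{\hat{T}_{24}}$, or more robustly argues that the $(-2)$-classes of $\NS(X)$ in the span of $K_{\hat{T}_{24}} \otimes \QQ$ form a root system whose effective chamber yields the curves (this is the content of \cite[Proposition 3.2]{Garbagnati} and the surrounding discussion). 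By the Remark attributed to Sch\"utt, hypothesis (2) is in fact automatic, so the only real work is (1), which is standard.

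\textbf{Conclusion.} Once the configuration $4A_2 + 2A_3 + A_5$ of smooth irreducible rational curves on $X$ is in hand, Proposition \ref{prop:T24prop19} applies verbatim and gives a torus $T$ with an action of $\hat{T}_{24}$ such that $X = \Km(T, \hat{T}_{24})$, completing the equivalence. The main obstacle, as above, is purely the root-counting verification that the primitive enlargement $F_{\hat{T}_{24}} \subset K_{\hat{T}_{24}}$ introduces no new roots; everything else is an application of results already established in the excerpt. I expect the proof to be a two- or three-sentence affair, essentially: "By the root count (Magma / Remark \ref{rem:necessary cond even}) and Proposition \ref{prop: Bertin-Garbagati}, $X$ contains a configuration $4A_2+2A_3+A_5$ of smooth rational curves; now apply Proposition \ref{prop:T24prop19}."
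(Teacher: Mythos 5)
Your proposal is correct and follows essentially the same route as the paper: one checks (via Magma, or by the hand computation you sketch) that $K_{\hat{T}_{24}}$ and $F_{\hat{T}_{24}}$ have the same roots, applies Proposition \ref{prop: Bertin-Garbagati} together with Remark \ref{rem:SimpleBase} to obtain an actual configuration $4A_{2}+2A_{3}+A_{5}$ of smooth rational curves, and then concludes by Proposition \ref{prop:T24prop19}. The paper's proof is exactly the two-sentence affair you predicted.
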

\begin{proof}
Using MAGMA, it turns out that $K_{\hat{T}_{24}}$ has the same roots
as $F_{\hat{T}_{24}}$. We then apply Lemma \ref{prop: Bertin-Garbagati}
and Remark \ref{rem:SimpleBase}.
\end{proof}

\section{The case of equality in the orbifold Bogomolov Miyaoka Yau inequality
\label{sec:Kobayashi's-problem-for}}

\subsection{K3 surfaces}

For an orbifold $\mathcal{X}$ with only $ADE$ singularities, such
that $X$ has Kodaira dimension $0$ or $1$, one has $c_{1}^{2}(\mathcal{X})=0$
and the second orbifold Chern number is defined by $c_{2}(\mathcal{X})=c_{2}(X)-m(\mathcal{C})$,
where the rational number $m(\mathcal{C})\geq0$ depends only on the
type and number of the singularities of $\mathcal{X}$ (see Section
\ref{sec:Preliminaries}). For a K3 surface, part A) of Theorem \ref{thm:(Kobayashi)-1}
is thus equivalent to $m(\mathcal{C})\leq c_{2}(X)=24$. Our aim is
to characterize configurations $\mathcal{C}$ for which equality 
\[
c_{2}(\mathcal{X})=0
\]
holds, i.e.when $m(\mathcal{C})=c_{2}(X)$. For any configuration
$\mathcal{C}$ among the following $10$ configurations 
\[
\begin{array}{c}
16A_{1},\,\,\,9A_{2},\,\,\,6A_{1}+4A_{3},\,\,\,5A_{1}+4A_{2}+A_{5},\hfill\\
3A_{1}+4D_{4},\,\,2A_{1}+3A_{3}+2D_{4},\,\,A_{1}+2A_{2}+3A_{3}+D_{5},\\
A_{1}+4A_{2}+D_{4}+E_{6},\,\,\,4A_{2}+2A_{3}+A_{5},\,\,\,A_{1}+6A_{3},\qquad
\end{array}
\]
one has $m(\mathcal{C})=24$, moreover:
\begin{thm}
Suppose that a K3 $X$ contains the configuration $\mathcal{C}$ and
let $X\to\mathcal{X}$ be the contraction of the curves in $\mathcal{C}$.
There exists a finite group of automorphisms $G$ acting on a torus
$T$ such that $X=\Km(T,G)$ and $\mathcal{X}=T/G$.
\end{thm}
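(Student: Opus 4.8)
The plan is to prove the statement configuration by configuration, since each of the ten configurations $\mathcal{C}$ is the exceptional divisor $\mathcal{C}_G$ of the minimal resolution $X\to T/G$ for exactly one pair $(T,G)$ in Fujiki's list (cf. the table in Section \ref{sec:Classification-of-symplectic}), and for each such pair the implication ``$X$ contains $\mathcal{C}_G$ $\Longrightarrow$ $X=\Km(T,G)$ for some torus $T$'' is already available, either in the literature or in the preceding subsections. So the work is to match the ten cases with their sources and then to check the last assertion $\mathcal{X}=T/G$.

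For the four cyclic configurations $16A_1$, $9A_2$, $6A_1+4A_3$, $5A_1+4A_2+A_5$ (groups $\ZZ_2,\ZZ_3,\ZZ_4,\ZZ_6$) the statement is Nikulin's theorem \cite{Nikulin}, completed by Morrison \cite{Morrison}, for $16A_1$, and Bertin's results \cite{Bertin} for the other three. For the four non-cyclic configurations admitting a global fixed point, $2A_1+3A_3+2D_4$ ($Q_8$), $3A_1+4D_4$ ($Q_8\subset T_{24}$), $A_1+2A_2+3A_3+D_5$ ($Q_{12}$) and $A_1+4A_2+D_4+E_6$ ($T_{24}$), the statement is proved by Garbagnati \cite{Garbagnati} (\S4.2 of loc.\ cit., correcting and completing \cite{Wendland}). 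Finally, the two translation cases $A_1+6A_3$ and $4A_2+2A_3+A_5$ are Propositions \ref{prop:GroupQ8} and \ref{prop:T24prop19} of the present paper: from the given configuration of curves one extracts, using the length-versus-rank inequality of Proposition \ref{prop:()-NikuLattice} together with Remark \ref{rem:necessary cond even}, the even (resp.\ $3$-divisible) subsets needed; the associated bi-double (resp.\ triple) cover is a Kummer surface $\Km(T)$; the deck transformations lift to $T$ by Lemma \ref{lem:An-automorphism-lift} and Lemma \ref{lem:lift order 3 to bidoubles}; and Fujiki's classification, Theorem \ref{thm:(Fujiki-)-Translated}, forces the group generated by these lifts to be $\hat{Q}_8$ or $\hat{T}_{24}$.

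It remains to identify $\mathcal{X}$ with $T/G$. In every case above the pair $(T,G)$ is produced together with a realization of $X$ as the minimal resolution $X\to T/G$, whose exceptional locus is precisely the set of $(-2)$-curves lying over the $ADE$ singularities of $T/G$, namely the curves of $\mathcal{C}$. Since the contraction of an $ADE$ configuration of curves on a smooth surface is uniquely determined by that configuration, the surface $\mathcal{X}$ obtained by contracting the curves of $\mathcal{C}$ coincides with $T/G$; consistently, $c_2(\mathcal{X})=24-m(\mathcal{C})=0$, as must hold for a torus quotient.

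I do not expect a genuine obstacle here: this is a bookkeeping statement assembling the classification, and the real content lies in the two new Propositions of Section \ref{sec:Classification-of-symplectic} and in the cited work of Nikulin, Bertin and Garbagnati. The only point requiring a word of care is that the hypothesis of each quoted statement be read as ``$X$ contains the $ADE$ configuration $\mathcal{C}$'' rather than as the a priori weaker lattice condition ``$K_G$ is primitively embedded in $\NS(X)$''; by Proposition \ref{prop: Bertin-Garbagati} and Remark \ref{rem:SimpleBase} the two formulations are in fact equivalent, which is exactly the equivalence recorded in the Introduction.
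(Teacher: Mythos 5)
Your proposal is correct and follows essentially the same route as the paper: the paper's proof of this theorem is precisely the case-by-case attribution you give (Nikulin for $16A_{1}$, Bertin for the three other cyclic cases, Garbagnati for the four non-cyclic fixed-point cases, and Propositions \ref{prop:GroupQ8} and \ref{prop:T24prop19} for the two translation cases). Your additional remarks on identifying $\mathcal{X}$ with $T/G$ and on the equivalence between the curve-configuration and lattice-embedding hypotheses are consistent with what the paper leaves implicit.
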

This is a result of Nikulin \cite{Nikulin} for $16A_{1}$, Bertin
\cite{Bertin} for the cases $9A_{2},\,6A_{1}+4A_{3},\,5A_{1}+4A_{2}+A_{5}$,
of Propositions \ref{prop:GroupQ8} and \ref{prop:T24prop19} for
the two last cases and Garbagnati \cite{Garbagnati} for the remaining
cases. A direct consequence is:
\begin{thm}
\label{thm:For-each-of}For each of the $10$ above cases, there exists
a lattice $\Gamma$ in the affine automorphism group of $\CC^{2}$
such that $\mathcal{X}=\CC^{2}/\Gamma$.
\end{thm}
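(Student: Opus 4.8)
The plan is to derive this directly from the previous theorem, which asserts that in each of the ten cases the contracted surface $\mathcal{X}$ is of the form $T/G$ for a torus $T$ and a finite group $G$ of symplectic automorphisms, together with Fujiki's classification. First I would fix one of the ten configurations $\mathcal{C}$ and invoke the previous theorem to write $\mathcal{X}=T/G$, where $G$ acts on the complex torus $T$. The universal cover of $T$ is $\CC^{2}$, with the group of deck transformations being the lattice of periods $\Lambda\cong\ZZ^{4}$ sitting inside $\CC^{2}$ as translations; thus $\Lambda$ is a (torsion-free) discrete cocompact subgroup of the translation subgroup of the affine group $\CC^{2}\rtimes GL_{2}(\CC)$. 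Since $T=\CC^{2}/\Lambda$, we have $\mathcal{X}=\CC^{2}/\Gamma$ where $\Gamma$ is the preimage of $G$ under the quotient $\CC^{2}\rtimes GL_{2}(\CC)\supset(\CC^{2}\rtimes G_{\mathrm{lin}})\to G$; concretely $\Gamma$ is the group generated by $\Lambda$ and by lifts to $\CC^{2}$ of the elements of $G$, realized as affine maps $x\mapsto Mx+b$ with $M\in GL_{2}(\CC)$ (indeed $M\in SL_{2}(\CC)$, since $G$ is symplectic).

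The key point is that $\Gamma$ is a \emph{discrete cocompact lattice} in $\CC^{2}\rtimes GL_{2}(\CC)$ in the sense required (the points with non-trivial isotropy are isolated, the isotropy groups are finite, and the quotient is compact). Discreteness follows because $\Lambda$ has finite index in $\Gamma$ (index $|G|$) and $\Lambda$ is discrete; cocompactness because $\CC^{2}/\Gamma=(\CC^{2}/\Lambda)/G=T/G$ is compact. For the isotropy condition: a non-trivial element of $\Gamma$ fixing a point of $\CC^{2}$ must map to a non-trivial element of $G$ (a non-trivial translation has no fixed points), and a symplectic automorphism of a torus has only isolated fixed points with finite stabilizer, which lift to isolated fixed points of $\Gamma$ on $\CC^{2}$ with finite stabilizer. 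Hence $\mathcal{X}=\CC^{2}/\Gamma$ with $\Gamma$ of the required type.

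I expect no serious obstacle here: the statement is essentially a reformulation of $\mathcal{X}=T/G$ once one passes to the universal cover of $T$, and all the geometric input (existence of $T$, $G$, the fixed-point structure, symplecticity) is already supplied by the preceding results of Nikulin, Bertin, Garbagnati, and Propositions \ref{prop:GroupQ8} and \ref{prop:T24prop19}. The only mild care needed is to check that lifting the finitely many generators of $G$ to affine transformations of $\CC^{2}$ is possible and produces a group $\Gamma$ that is genuinely a subgroup of $\CC^{2}\rtimes GL_{2}(\CC)$ with $\Lambda$ normal of finite index; this is the standard description of a crystallographic-type group and is routine. Since the ten configurations are precisely the ten lines of the table in Section \ref{sec:Classification-of-symplectic}, running this argument for each of them completes the proof.
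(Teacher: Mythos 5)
Your proof is correct and is essentially the argument the paper intends: the paper presents this theorem as "a direct consequence" of the preceding result $\mathcal{X}=T/G$, leaving implicit exactly the step you spell out, namely writing $T=\CC^{2}/\Lambda$ and taking $\Gamma$ to be the extension of $G$ by the period lattice $\Lambda$ inside the affine group. Your additional verifications (discreteness via the finite-index subgroup $\Lambda$, cocompactness, and isolated finite isotropy from the symplecticity of $G$) are all sound and simply make explicit what the paper omits.
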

In other words, each of the orbifold surfaces $\mathcal{X}$ is uniformisable
by $\CC^{2}$. 

It is easy to compute that there are $8$ other possible configurations
$\mathcal{C}$ with Milnor number $\rho\leq19$ (since $H^{2}(X,\ZZ)$
has signature $(3,19)$) and $m(\mathcal{C})=24$. These configurations
are: 
\[
\begin{array}{ccccc}
\mathcal{C}_{1}= & 11A_{1}+2A_{3}\hfill &  & \mathcal{C}_{5}= & 5A_{1}+A_{2}+D_{4}+D_{8}\quad\\
\mathcal{C}_{2}= & 7A_{1}+A_{3}+2D_{4}\hfill &  & \mathcal{C}_{6}= & 5A_{1}+A_{3}+A_{4}+D_{7}\quad\\
\mathcal{C}_{3}= & 5A_{1}+A_{3}+A_{7}+D_{4}\quad &  & \mathcal{C}_{7}= & 2A_{1}+2A_{2}+2D_{4}+D_{5}\\
\mathcal{C}_{4}= & 6A_{1}+2A_{2}+A_{3}+D_{5} &  & \mathcal{C}_{8}= & A_{1}+4A_{2}+2D_{5}.\hfill
\end{array}
\]
The aim of this section is to prove the following result, which with
Theorem \ref{thm:For-each-of} implies Theorem \ref{thm:MAIN-1}:
\begin{prop}
For any $i\in\{1,\dots,8\}$ there is no complex K3 surface containing
a configuration $\mathcal{C}_{i}$ .
\end{prop}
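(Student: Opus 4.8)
The strategy is to rule out each of the eight configurations $\mathcal{C}_1,\dots,\mathcal{C}_8$ by extracting from it a sub-configuration of disjoint $(-2)$-curves whose generated lattice $F$ has a discriminant group too large to fit primitively into $H^2(X,\ZZ)$, and then showing that the forced divisibility classes are geometrically impossible. Concretely, for a set of $m$ disjoint $(-2)$-curves the lattice they span has length $m$, so by Proposition \ref{prop:()-NikuLattice} applied to the unimodular lattice $H^2(X,\ZZ)$ of rank $22$, if $m > 22 - m$, i.e. $m \geq 12$, there must be a non-trivial divisible class; more refined bounds (Lemma \ref{lem:8 nodes}) give that $12$ disjoint nodes force an even set on $8$ of them and $13$ force two independent even sets. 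The same Proposition applied to disjoint $A_2$'s gives $3$-divisibility once one has $\geq 7$ of them. So the first step is, for each $\mathcal{C}_i$, to write down the largest collection of disjoint $A_1$'s (counting the maximal disjoint sub-configurations of each $A_n$ and $D_n$, as in Remark \ref{rem:necessary cond even}) and the largest collection of disjoint $A_2$'s, and to check $\rho = \sum n(\a_n+\d_n+\e_n) = 19$ in each case so the negative-definite part is as large as it can be (orthogonal complement of rank $3$).

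The second step is to convert the forced even sets (resp. $3$-divisible sets) into a contradiction using the divisibility obstructions of Remark \ref{rem:necessary cond even}. The key point: an irreducible component of an $A_n$ can only sit in an even set when $n$ is odd and one takes the $\frac{n+1}{2}$ pairwise-disjoint curves of that $A_n$; a $D_{2k}$ contributes $k$ disjoint curves plus its two extremal ones to an even set, a $D_{2k+1}$ only its two extremal ones; and a sub-$A_2$ of an $A_3$ is never $3$-divisible because $\ZZ_4 \not\supset \ZZ_3$. For configurations like $\mathcal{C}_1 = 11A_1 + 2A_3$, the disjoint-node count is $11 + 2\cdot 2 = 15 \geq 13$, so there are two independent even sets on $12$ of these $15$ nodes; but one then counts lengths: the lattice generated by $15$ disjoint nodes has discriminant $(\ZZ_2)^{15}$, even after dividing by two independent even sets one has length $\geq 15 - 2 = 13 > 3$, forcing yet more even sets (there is no room for $4$-divisibility here since all curves are $(-2)$ with no $A_3$-length-$3$ structure relating them in a $\ZZ_4$), but $15$ disjoint nodes on a K3 can support at most... — here one invokes that even sets of nodes on a K3 have cardinality $8$ or $16$ and a maximal disjoint set has $16$, and a careful length count rules out the required number of independent even sets. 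One carries out the analogous bookkeeping for each $\mathcal{C}_i$: $\mathcal{C}_2,\mathcal{C}_5,\mathcal{C}_7$ contain two $D_4$'s, whose extremal curves enlarge the node count; $\mathcal{C}_4,\mathcal{C}_8$ have enough $A_2$'s (with the $A_5$ or otherwise) to force a $3$-divisible set, which one pushes to a contradiction via the $A_3$-obstruction and a second length count on the resulting triple cover; $\mathcal{C}_3,\mathcal{C}_6$ mix a long $A_7$ or $D_7$ with several nodes and an $A_3$, again yielding a disjoint-node count $\geq 13$.

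The third step, for any $\mathcal{C}_i$ that survives the purely lattice-theoretic count, is to take the double (or bi-double, or triple) cover associated to the forced even/$3$-divisible set, as in Lemmas \ref{lem:An-automorphism-lift}--\ref{lem:lift order 3 to bidoubles}, and observe that the cover is again a K3 (or a torus) carrying a smaller-but-still-forbidden configuration, so one descends to a case already excluded — in the spirit of Propositions \ref{prop:GroupQ8} and \ref{prop:T24prop19}. In particular if a cover turned out to be a torus, one would need $16$ disjoint nodes or $9$ disjoint $A_2$'s downstairs, which none of the $\mathcal{C}_i$ supplies, and if it is a K3 one checks its configuration is not among the $10$ admissible ones. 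I expect the main obstacle to be the bookkeeping for the configurations with several $D_n$'s (notably $\mathcal{C}_2 = 7A_1+A_3+2D_4$ and $\mathcal{C}_5, \mathcal{C}_7$): there the number of disjoint nodes hovers right around the threshold $12$–$13$, the $D_4$ discriminant group $(\ZZ_2)^2$ interacts subtly with the even sets, and one must argue carefully — possibly with an explicit \textsc{Magma} computation of the discriminant form of each $F_{\mathcal{C}_i}$ and of its admissible overlattices of the right length — that no overlattice has both the correct discriminant form (matching that of a rank-$3$ lattice of signature $(3,0)$, as required for a primitive embedding into $H^2(X,\ZZ)$) and the same roots, so that Proposition \ref{prop: Bertin-Garbagati} can never be invoked to realize the curves. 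Where the discriminant-form comparison alone does not kill the case, the cover construction of step three finishes it.
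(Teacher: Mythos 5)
Your toolkit is the right one, and it is exactly the paper's: for each $\mathcal{C}_i$ extract a large set of pairwise disjoint $(-2)$-curves, invoke Lemma \ref{lem:8 nodes} (or Proposition \ref{prop:()-NikuLattice} directly, for $\mathcal{C}_8$) to force even or $3$-divisible sets, and contradict the local obstructions of Remark \ref{rem:necessary cond even}. But the entire mathematical content of this proposition \emph{is} the eight-fold case analysis, and you do not complete a single case. Even $\mathcal{C}_1$, the only one you treat in detail, is left at ``a careful length count rules out the required number of independent even sets'' --- and with your choice of $15$ disjoint nodes (both extremal curves of each $A_3$) that count is genuinely not enough: four independent even sets of size $8$ meeting pairwise in $4$ points do exist combinatorially on $15$ points (the $[15,4,8]$ simplex code), so you would still need the $A_3$-pairing constraint to finish. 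The efficient move, which the paper makes, is to take only \emph{one} curve from each $A_3$, giving $13$ disjoint nodes; Lemma \ref{lem:8 nodes} forces two independent even sets on $12$ of them, but a lone extremal curve of an $A_3$ (its partner absent from the chosen set) can never lie in an even set by part A) of Remark \ref{rem:necessary cond even}, so both even sets must live on the $11A_1$ --- too few curves. One-line arguments of this kind dispose of $\mathcal{C}_1,\mathcal{C}_2,\mathcal{C}_4,\mathcal{C}_5,\mathcal{C}_6,\mathcal{C}_7$; only $\mathcal{C}_3$ requires passing to the double cover (whose Picard number would exceed $20$), and only $\mathcal{C}_8$ requires the discriminant-length computation on the full lattice. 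No \textsc{Magma} search over overlattices and no appeal to Proposition \ref{prop: Bertin-Garbagati} is needed anywhere.

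Two of your scaffolding claims are also false and would derail the bookkeeping if relied upon. First, $\rho=19$ does \emph{not} hold in each case: $\mathcal{C}_1$, $\mathcal{C}_2$ and $\mathcal{C}_4$ have $\rho=17,18,18$ respectively, so the ``length at most $3$'' bound is available only for the other five configurations. Second, $k$ disjoint $A_2$'s span a lattice of rank $2k$ and length $k$, so a $3$-divisible class is forced only when $k>22-2k$, i.e.\ $k\geq8$, not $k\geq7$ as you assert.
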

\begin{rem}
Some of these configurations $\mathcal{C}_{i}$ may exist in characteristic
$p>0$. Indeed by \cite[Corollary 3.17 and Remark 7.3]{Katsura},
the cyclic groups $\ZZ_{5},\ZZ_{8},\ZZ_{10},$ $\ZZ_{12}$, the binary
dihedral groups $\DD_{n-2}$ (of order $4n-8$, creating singularity
$D_{n}$) with $n\in\{4,\dots,8\}$ and the binary octahedral and
icosahedral groups act symplectically on some Abelian surfaces in
characteristic $p>0$. 
\end{rem}

\subsubsection{Configuration $\mathcal{C}_{1}=11A_{1}+2A_{3}$}

The $11A_{1}$ plus one curves from each $A_{3}$ form a set of
$13$ disjoint curves. By Remark \ref{rem:necessary cond even} there
are two linearly independent even sets supported on $12$ curves.
This is impossible since a unique curve on a $A_{3}$ cannot be part
of an even set. Such a configuration $\mathcal{C}_{1}=11A_{1}+2A_{3}$
does not exist on a complex K3.

\subsubsection{Configuration \textup{$\mathcal{C}_{2}=7A_{1}+A_{3}+2D_{4}$}. }

There are $14$ disjoint rational curves: $7A_{1}$, one curve in
$A_{3}$ plus $3$ curves for each $D_{4}$. If there are $14$ disjoint
rational curves on a K3 surface, there are three independent even
sets, supported on all the curves. But one curve in $A_{3}$ can not
be in the support of an even set. Therefore that configuration does
not exist on a complex K3 surface.

\subsubsection{Configuration $\mathcal{C}_{3}=5A_{1}+A_{3}+A_{7}+D_{4}$. }

Let us consider the following set of $12$ disjoint rational curves
supported on $\mathcal{C}_{3}$ : $5A_{1}$ plus the two disjoint
curves in $A_{3}$, plus two disjoint curves in $A_{7}$ (at the extrema)
and three disjoint curves in $D_{4}$. It contains an even set of
curves $E$. The two curves on the $A_{7}$ cannot be on the support
of $E$. One must take $0$ or $2$ curves in the $D_{4}$, thus the
even set is made of $4A_{1}$ plus the two disjoint curves on the
$A_{3}$ and two disjoint curves on the $D_{4}$. The K3 double cover
will have a configuration 
\[
2A_{1}+A_{1}+2A_{7}+A_{3},
\]
but it would have Picard number $>20$, contradiction.

\subsubsection{Configuration $\mathcal{C}_{4}=6A_{1}+2A_{2}+A_{3}+D_{5}$. }

There is a set of $13$ disjoint rational curves on $\mathcal{C}$.
There must be two linearly independent even set supported on $12$
of these curves. But an even set cannot contain the curves in a $A_{2}$. 

\subsubsection{Configuration\textup{ $\mathcal{C}_{5}=5A_{1}+A_{2}+D_{4}+D_{8}$}}

Let us consider the following $14$ curves: $5A_{1}$ plus one curve
in $A_{2}$, plus the $3$ disjoint curves in $D_{4}$ and the $5$
disjoint curves in $D_{8}$. As for configuration $\mathcal{C}_{2}$,
there are three independent even sets, supported on all the curves.
But the curve in $A_{2}$ cannot be in the support of a 2-divisible
even set.

\subsubsection{Configuration\textup{ $\mathcal{C}_{6}=5A_{1}+A_{3}+A_{4}+D_{7}$}}

The sub-configuration $4A_{1}+A_{3}+A_{4}+D_{7}$ contains $13$ disjoint
rational curves (the $5A_{1}$ plus $2$ disjoint curves in $A_{3}$,
$2$ in $A_{4}$ and $4$ curves in $D_{7}$), thus there exist two
independents even sets supported on $12$ curves. However the $2$
disjoint curves in $A_{4}$ cannot be part of an even set.

\subsubsection{Configuration $\mathcal{C}_{7}=2A_{1}+2A_{2}+2D_{4}+D_{5}$}

There are $13$ disjoint rational curves on $\mathcal{C}_{7}$, thus
there exists two linearly independent even sets and we obtain a contradiction
as before by looking at the possible supports for these two even sets.

\subsubsection{Configuration $\mathcal{C}_{8}=A_{1}+4A_{2}+2D_{5}$}

The discriminant group of $\mathcal{C}_{8}$ is 
\[
\ZZ_{2}\times(\ZZ_{3})^{2}\times(\ZZ_{12})^{2}
\]
it has length $5$, but the lattice has rank $19$ and a minimal primitive
sub-lattice of rank $19$ has a discriminant with length at most $3$.
Thus there exists some divisibilities by $2$ or $3$. But it is easy
to check using Remark \ref{rem:necessary cond even} that no such
an even set can exist, nor there exists a $3$-divisible set of $6A_{2}$. 

\subsection{Enriques surfaces}

An Enriques surface $Z$ has invariants $K_{Z}^{2}=0,\,c_{2}=12$
with $2K_{Z}=0$. It is the quotient of a K3 by a fix-point free involution.
Let $\mathcal{C}$ be a configuration of $ADE$ curves on an Enriques
surface $Z$ such that the associated orbifold $\mathcal{Z}$ has
Chern numbers $c_{1}^{2}(\mathcal{Z})=3c_{2}(\mathcal{Z})$.
\begin{prop}
\label{prop:8A1 enrik}The configuration $\mathcal{C}$ is $\mathcal{C}=8A_{1}$.
There exists an Abelian surface $A$ isogeneous to the product of
two elliptic curves, a group of automorphisms $G\simeq(\ZZ_{2})^{2}$
of the surface $A$ generated by the involution $[-1]$ and a fix-point
free involution such that $Z$ is the minimal resolution of $A/G$.
\end{prop}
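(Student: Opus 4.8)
The plan is to argue exactly as for K3 surfaces: for an Enriques surface $Z$, one has $c_1^2(\mathcal{Z})=0$ and $c_2(\mathcal{Z})=c_2(Z)-m(\mathcal{C})=12-m(\mathcal{C})$, so equality $c_1^2(\mathcal{Z})=3c_2(\mathcal{Z})$ forces $m(\mathcal{C})=12$. First I would combine this with the constraint coming from the signature of $H^2(Z,\ZZ)\otimes\QQ$, which is $(1,9)$ for an Enriques surface, so that a configuration of $ADE$ curves contributes at most $10$ to the Picard number: $\sum_{n\geq1} n(\a_n+\d_n+\e_n)\leq 10$. I would then enumerate all $ADE$ configurations $\mathcal{C}$ satisfying simultaneously $m(\mathcal{C})=12$ and this rank bound; this is a finite, routine check (each $A_n$, $D_n$, $E_n$ contributes close to $n+1$ to $m(\mathcal{C})$, so a configuration of total rank $r\leq10$ has $m(\mathcal{C})$ close to $r$ plus the number of components, and hitting $12$ on the nose is restrictive). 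The expectation is that the only survivor is $8A_1$.

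Having reduced to $\mathcal{C}=8A_1$, the next step is to recognize $8A_1$ as an even set on the Enriques surface. Here I would use the analogue of Lemma \ref{lem:8 nodes}: the lattice generated by $8$ disjoint $(-2)$-curves has discriminant group $(\ZZ_2)^8$ of length $8$, while on an Enriques surface $\NS(Z)$ has rank $10$, so by (the Enriques-surface version of) Proposition \ref{prop:()-NikuLattice} there is a nontrivial $2$-divisible class supported on these curves; since on an Enriques surface an even set has $4$ or $8$ disjoint curves, and one can check no proper subset of $8A_1$ of size $4$ can be $2$-divisible in this configuration while being complementary to another such set, the divisible class must be the full $8A_1$. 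Alternatively, and perhaps more cleanly, I would pass to the K3 double cover: the pullback of the $8A_1$ gives $16A_1$ on a K3 surface $X$, which is therefore a Kummer surface $\Km(A)$ for an Abelian surface $A$; the Enriques involution $\epsilon$ on $X$ preserves this $16A_1$ (being the pullback of a configuration on $Z$), hence lifts to an automorphism of $A$ by Lemma \ref{lem:An-automorphism-lift}, and by Lemma \ref{lem:An-automorphism-lift} B) it fits in $0\to\langle[-1]\rangle\to G\to\langle\epsilon\rangle\to 1$ with $|G|=4$, $G\simeq(\ZZ_2)^2$, generated by $[-1]$ and a further involution which must be fixed-point-free (since $\epsilon$ is and the two torsion points are already accounted for by $[-1]$); then $Z$ is the minimal resolution of $A/G$.

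The remaining assertion is that $A$ is isogenous to a product of elliptic curves. For this I would note that $A/[-1]$ desingularizes to the K3 surface $X$, and the second involution descends to a fixed-point-free involution on $X$ whose quotient is the Enriques surface $Z$; conversely the translation-type involution $t$ on $A$ commutes with $[-1]$, so $t$ is translation by a $2$-torsion point $a$, and $A/\langle t\rangle$ is again an Abelian surface $A'$ with $A\to A'$ an isogeny of degree $2$. The constraint that $\langle[-1],t\rangle$ acts on $A$ with $[-1]\cdot t$ also fixed-point-free (equivalently, $x\mapsto -x+a$ has no fixed point, i.e. $a\notin 2A$) forces, together with the requirement that the resulting resolution be Enriques rather than a torus quotient, a splitting condition on $A$; concretely one shows $A$ must carry two complementary elliptic subgroups exchanged or preserved appropriately by the action, hence $A$ is isogenous to $E_1\times E_2$. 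The main obstacle I anticipate is precisely this last point — showing the product-isogeny structure is \emph{forced} rather than merely \emph{possible} — which I expect to handle by a direct analysis of fixed-point-free involutions on Abelian surfaces commuting with $[-1]$, following the classical description (e.g. as in the treatment of Enriques surfaces covered by Kummer surfaces), and the construction of such surfaces in the converse direction then makes the statement an equivalence.
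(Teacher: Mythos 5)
Your numerical reduction does not terminate where you expect it to, and that is the main gap. The rank bound on an Enriques surface is $\sum n(\a_{n}+\d_{n}+\e_{n})\leq9$ (the negative definite part of $\NS(Z)$, of signature $(1,9)$, has rank $9$, not $10$), and the enumeration of configurations with $m(\mathcal{C})=12$ subject to this bound does \emph{not} leave only $8A_{1}$: the configuration $3A_{1}+2A_{3}$ has $m=3\cdot\frac{3}{2}+2\cdot\frac{15}{4}=12$ and rank $9$, and it survives every lattice-theoretic filter you propose (its double $6A_{1}+4A_{3}$ is even one of the ten configurations actually realized on K3 surfaces, as the $\ZZ/4\ZZ$ Kummer configuration). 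Excluding it is the bulk of the paper's proof: one shows that an Enriques quotient $A/G$ with singularities $3A_{1}+2A_{3}$ would require a reduced group $G$ of order $8$ containing an order-$4$ element, hence $G\in\{\ZZ_{8},\ZZ_{4}\times\ZZ_{2},\mathbb{D}_{4},Q_{8}\}$, and each case is killed by an explicit fixed-point or singularity computation. None of this is "routine", and your proposal omits it entirely. (The paper also gets the list of candidates more cheaply than by direct enumeration: pull $\mathcal{C}$ back to the K3 cover $X$, where it becomes $2\mathcal{C}$ with $m(2\mathcal{C})=24$, and observe that among the configurations achieving $m=24$ on a K3 only $16A_{1}$ and $6A_{1}+4A_{3}$ are of the form $2\mathcal{C}$.)

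There are two further problems in your treatment of the $8A_{1}$ case. First, you assert $G\simeq(\ZZ_{2})^{2}$ without excluding that the lift $\s'$ of the Enriques involution to $A$ has order $4$ with $\s'^{2}=[-1]$; your parenthetical about "the two torsion points" is not an argument. The paper rules this out because $H^{0}(Z,K_{Z})=0$ forces the analytic representation of such a $\s'$ to have eigenvalues $(i,i)$, so $A=(\CC/\ZZ[i])^{2}$ and $\s'$ is multiplication by $i$ composed with a translation, which always has a fixed point. Second, your description of the other generator as "translation by a $2$-torsion point $a$" is wrong, and the condition "$a\notin2A$" you extract from it is vacuous since $2A=A$ (so $x\mapsto-x+a$ always has the fixed point $x=a/2$); moreover $\la[-1],\text{translation}\ra$ would produce a Kummer K3, not an Enriques surface. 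The correct statement, and the one that actually yields the isogeny with $E_{1}\times E_{2}$, is that $H^{0}(Z,K_{Z})=0$ forces the \emph{linear part} of the order-$2$ lift to have eigenvalues $(1,-1)$, i.e. $\s'(z_{1},z_{2})=(-z_{1},z_{2})+v$; the $\pm1$-eigenlattices in $H_{1}(A,\ZZ)$ give the two elliptic curves, commutation with $[-1]$ forces $v$ to be $2$-torsion, and fixed-point-freeness forces the $E_{2}$-component of $v$ to be nonzero. So the product structure is not an extra obstacle to be handled separately: it falls out of correctly identifying the linear part, which your proposal misidentifies.
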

\begin{proof}
For an Enriques surface the condition $c_{1}^{2}(\mathcal{Z})=3c_{2}(\mathcal{Z})$
is equivalent to $c_{2}(\mathcal{Z})=0$ i.e. $m(\mathcal{C})=12$.
\\
Let $X\to Z$ be the étale double cover of $Z$. The K3 surface $X$
contains the configuration $2\mathcal{C}$, which verifies $m(2\mathcal{C})=24$,
thus the only possibilities are $\mathcal{C}=8A_{1}$ and $\mathcal{C}=3A_{1}+2A_{2}$.
\\
Let $\s$ be the Enriques involution on $X$ so that $Z=X/\s$. The
involution $\s$ preserves the $16A_{1}$ (resp $6A_{1}+4A_{2}$)
on $X$, thus it lifts to an automorphism $\s'$ on the Abelian surface
$A$ such that $X=\Km(A)$ (resp $X=\Km(A,\ZZ_{4})$). Since $\s$
has no fix points on $X$, $\s'$ has no fix points either on $A$.

Let us study the case $\mathcal{C}=8A_{1}$. Suppose that a lift $\s'$
of $\s$ has order $4$; then $\s'^{2}$ is the transformation of
the double cover $A\to X$, i.e. $\s'^{2}=[-1]$. Since $H^{0}(Z,K_{Z})=0$,
$\s'$ must not preserve the space $H^{0}(A,K_{A})$, thus (up to
replacing $\s'$ by $\s'^{3}$) the eigenvalues of the analytic representation
of $\s'$ are $i,i$ and $A$ is the surface $(\CC/\ZZ[i])^{2}$,
$\s'$ is the multiplication by $i$ map composed by some translation.
But such morphism has always fixed points. \\
Therefore $\s'$ has order $2$, commutes with $[-1]$ and the eigenvalues
of its analytic representation are $(1,-1)$. Then there exists coordinates
of $T_{A}\simeq\CC^{2}$ such that $\s':A\to A$ is given by 
\[
\s'(z_{1},z_{2})=(-z_{1},z_{2})+v
\]
 where $v\in A$. Thus there exist a product $E_{1}\times E_{2}$
of elliptic curves and an isogeny $E_{1}\times E_{2}\to A$; moreover
since $\s'$ commutes with $[-1]$, one must have $v=-v$ i.e. $v$
is a $2$ torsion point. Since $\s'$ has no fix points $v$ is non-trivial.

Let us study the case $3A_{1}+2A_{3}$.  Suppose there exists an
Abelian surface $A$ and a group $G$ of order $8$, such that $A/G$
is an Enriques surface with a configuration $2A_{3}+3A_{1}.$ For
an automorphism $\tau$ let $\tau_{0}$ be the linear part of $\tau$,
and $G_{0}$ the group $\{\tau_{0}\,|\,\tau\in G\}$. An element $\tau$
in the kernel $K$ of $G\to G_{0}$ is a translation, but then $A/G$
is the surface $A'/G_{0}$ where $A'=A/K$ and $G_{0}$ has order
$4$, which lead to a contradiction. The group $G$ is therefore isomorphic
to $G_{0}$ and since it contains an order $4$ element, it is among
the following groups 
\[
\ZZ_{8},\,\ZZ_{4}\times\ZZ_{2},\,\mathbb{D}_{4},\text{ or }Q_{8}.
\]
There are no order $8$ automorphisms acting on an Abelian surface
\cite{Fujiki} thus $G\neq\ZZ_{8}$. The group $G$ is generated by
a fix-point free involution $\s$ and an automorphism $\mu$ of order
$4$ such that $A/\mu$ is a K3 with $4A_{3}+6A_{1}$ (in particular
$\mu^{2}=[-1]$), moreover the involution $\s$ induces a fix-point
free non-symplectic involution on $A/\mu.$ The group $G$ is not
$Q_{8}$ since that groups has a unique involution. \\
Suppose that this is $\ZZ_{4}\times\ZZ_{2}=\la\mu\ra\times\la\s\ra$,
then $\s_{0}$ is $\s_{0}(z_{1},z_{2})=(-z_{1},z_{2})$ and 
\[
\s(z_{1},z_{2})=(-z_{1},z_{2})+v
\]
where $v$ is a non-trivial $2$-torsion point. Moreover since $\mu_{0}\s_{0}=\s_{0}\mu_{0}$,
the element $\mu$ must act diagonally, thus 
\[
\mu(z_{1},z_{2})=(iz_{1},-iz_{2}).
\]
Therefore $A=C\times C$, where $C$ is the elliptic curve $\CC/\ZZ[i]$.
One has 
\[
\s\mu(z_{1},z_{2})=(-iz_{1},-iz_{2})+v,
\]
which has always some fix points, creating $\frac{1}{4}(1,1)$ singularities,
but there is no such singularities on Enriques surfaces.\\
It remains the dihedral group $\mathbb{D}_{4}$ of order $8$. There
is only one faithful $2$-dimensional representation of $\mathbb{D}_{4}$,
which is generated by 
\[
\s_{0}(z_{1},z_{2})=(-z_{1},z_{2}),\,\mu(z_{1},z_{2})=(-z_{2},z_{1}).
\]
Taking $\s(z_{1},z_{2})=(-z_{1},z_{2})+v$ where $v$ is a $2$-torsion
point, the involution $\s\mu$ has a one dimensional fixed point set,
thus the quotient of $A$ by $\DD_{4}$ is a rational surface (see
e.g. \cite{Katsura}). We thus proved that there is no Enriques surface
containing a configuration $3A_{1}+2A_{3}$.
\end{proof}
\begin{example}
(Lieberman, see \cite{Keum}). Let $A$ be the product of two elliptic
curves $A=E_{1}\times E_{2}$ and let  $(e_{1},e_{2})$ be a $2$-torsion
point on $A$, with $e_{1}\neq0,\,e_{2}\neq0$. Then the endomorphism
$\tau:A\to A$ given by 
\[
\tau(z_{1},z_{2})=(-z_{1}+e_{1},z_{2}+e_{2})
\]
induces a fix-point free involution on the Kummer surface $\Km(A)$.
The associated Enriques surface contains a $8A_{1}$ configuration.
\end{example}
Let $T_{0}$ be a sub-group of torsion points on $A=E_{1}\times E_{2}$
as above, such that $\tau(T_{0})=T_{0}$ and $(e_{1},0),\,(0,e_{2})$
are not element of $T_{0}$. Then $\tau$ induces a fix-point free
involution $\tau'$ on the quotient $A/T_{0}$ and $A/\la T_{0},[-1],\tau\ra$
is an Enriques surface containing $8A_{1}$. Reciprocally, from the
proof of Proposition \ref{prop:8A1 enrik}, every Enriques surface
containing $8A_{1}$ is obtained by that construction.

\noindent Xavier Roulleau,
\\Aix-Marseille Universit\'e, CNRS, Centrale Marseille,
\\I2M UMR 7373,  
\\13453 Marseille, France
\\ {\tt Xavier.Roulleau@univ-amu.fr}

\vspace{0.3cm}
\end{document}